\documentclass[11pt,letterpaper]{amsart}
\usepackage{amsmath,amssymb,amsthm, epsfig, amsmath}
\usepackage{amsfonts}
\usepackage{graphicx}
\usepackage{hyperref}
\usepackage{xcolor}
\usepackage{tikz} 
\usepackage{mathtools}
\usepackage{changepage}

\usepackage{cleveref}

\hypersetup{
	colorlinks,
	urlcolor=azul,
	linkcolor=azul,
	citecolor=azul}

%newtheorems

\newtheorem{theorem}{Theorem}

\definecolor{azul}{RGB}{33,64,154}

\newtheorem{corollary}[theorem]{Corollary}

\newtheorem{definition}[theorem]{Definition}

\newtheorem{lemma}[theorem]{Lemma}

\newtheorem{proposition}[theorem]{Proposition}
\newtheorem{remark}[theorem]{Remark}

\numberwithin{equation}{section}
\numberwithin{theorem}{section}

\hyphenation{non-empty boun-dary}

%page size
\setlength{\textheight}{22 cm}
\setlength{\textwidth}{13.5 cm}
\setlength{\topmargin}{0cm}
\setlength{\oddsidemargin}{1 cm}
\setlength{\evensidemargin}{1 cm}
\setlength{\parindent}{20pt}

%commands

\newcommand{\supp}{\operatorname{supp}}

\newcommand{\vol}{\mathrm{vol}}

\title[Conformal geometry and Spectral Bounds]{Conformal geometry and Spectral Bounds on Manifolds with Boundary}

\author{Tiarlos Cruz  \and Leandro F. Pessoa \and Erisvaldo V\'eras}

\address{ Instituto de Matem\'atica, Universidade Federal de Alagoas, 57072-970, Macei\'o - Alagoas, Brazil}
 \email{cicero.cruz@im.ufal.br}

 \address{Departamento de Matemática, Universidade Federal do Piauí, 64049-550, Teresina - Piauí, Brazil}
 \email{leandropessoa@ufpi.edu.br}

 \address{Departamento de Matemática, Universidade Federal do Piauí,
 64049-550, Teresina - Piauí, Brazil}
 \email{erisvaldoveras35@gmail.com}

\thanks{T. Cruz was partially supported by CNPq, Brazil, under grant numbers 307419/2022-3, 408834/2023-4, 444531/2024-6,  403770/2024-6,  400078/2025-2 and CAPES/MATH-AMSUD 88887.985521/2024-00. L.F. Pessoa was partially supported by CNPq, Brazil, under grant numbers 306543/2022-2, 402563/2023-9 and 402713/2024-9. E. Véras was supported by Funda\c c\~ao de Amparo \`a Pesquisa do Piau\'i - FAPEPI/Brazil, Scholarship: 029/2021.
}
\subjclass[2020]{58J50, 53A30, 35P15}
\keywords{Steklov spectrum, Steklov-type problem, relative conformal volume, conformal Dirichlet-to-Robin map, Morse index}

\begin{document}

\begin{abstract}
This work investigates upper bounds for the spectrum of the Steklov-type operator on Riemannian manifolds with boundary. We extend the Fraser–Schoen estimate for the first positive Steklov eigenvalue to higher Steklov eigenvalues, in terms of the relative conformal volume and the isoperimetric ratio. Our approach, which draw on Korevaar's method, further developed by Grigor'yan-Netrusov-Yau and Kokarev, can be adapt to derive a Korevaar-type estimate for the conformal Dirichlet-to-Robin operator on the Euclidean ball, showing that its $k$-th eigenvalue is bounded from above by a multiple of $k^{2/n}$, as well as a corresponding bound in terms of the relative conformal volume for proper conformal immersion into the Euclidean ball. We also establish a lower bound for the number of negative eigenvalues of the Steklov-type problem in terms of the relative conformal volume, with applications to the spectrum of the conformal Dirichlet-to-Robin operator and to the Morse index of type-II stationary capillary hypersurfaces.% in the Euclidean ball.
\end{abstract}

\maketitle
\tableofcontents

\section{Introduction}

The spectral theory of geometric operators on Riemannian manifolds lies at the interface of analysis, geometry, and topology. A fundamental example is the Laplace–Beltrami operator on a closed Riemannian manifold, whose first eigenvalue admits classical estimates derived from coordinate test functions in the Rayleigh quotient, following a normalization that positions the barycenter at the origin. This technique, known as Hersch’s trick, was introduced in \cite{He} and has been widely applied (see e.g. \cite{SI, LY, YY}).
While Hersch's method addresses only the first eigenvalue, the question of controlling higher eigenvalues required new ideas. 
A breakthrough was achieved by Korevaar \cite{K}, who established upper bounds for the $k$-th Laplace eigenvalue via a new approach,  later refined by Grigor'yan and Yau \cite{gy} and Grigor'yan, Netrusov, and Yau \cite{gny}.

For manifolds with boundary, one naturally considers spectral problems that reflect the geometry of the boundary. One of the most fundamental is the Steklov eigenvalue problem, first introduced by Vladimir Steklov in 1902 (cf. \cite{Stekloff}). The Steklov eigenvalues are highly sensitive to the boundary geometry and have recently gained prominence through their applications to free boundary minimal surfaces \cite{fs,fs2,etal,KS}.  
For compact surfaces, Fraser and Schoen \cite{fs} showed that the first Steklov eigenvalue is controlled by the topology of the surface and the length of its boundary, generalizing an earlier result of Weinstock in \cite{W}. This establishes a direct analogy with the well-known estimates for the first Laplace eigenvalue on closed surfaces \cite{H,YY}, revealing a deep parallel between the two spectral theories.
For further reading, we refer interested readers to the surveys \cite{CGGS, GPs, MLi}. 

A key contribution of Fraser and Schoen \cite{fs} was the introduction of the relative conformal volume, inspired by the conformal volume defined by Li and Yau \cite{LY}, and also referred to as visual volume by Gromov \cite{G}.
Assume that $(M,g)$ admits proper conformal immersions into the Euclidean unit ball $\mathbb{B}^m$. The \textit{relative $m$-conformal volume of $M$} is the  min-max invariant defined  by
\begin{equation*}
     V_{rc} (M,m)=\inf_{\psi}\sup_{f \in G} {\vol}_g( f (\psi (M))), 
\end{equation*}   
where the infimum is taken over all nondegenerate conformal maps $\psi \colon M \rightarrow \mathbb{B}^m$ satisfying $\psi(\partial M) \subset \partial \mathbb{B}^m$, and $G$ denotes the group of conformal diffeomorphisms of $\mathbb{B}^m$. Here $\vol_g$ refers to the volume with respect to the induced metric $g=f^*\psi^*g_0$, where $g_0$ is the canonical metric on $\mathbb{B}^m$. Building on this concept, the authors established in \cite[Theorem 6.2]{fs} a boundary analogue of the classical estimate proved by Li and Yau \cite{LY}, as well as El Soufi and Ilias \cite{SI}, for the first eigenvalue of the Laplace-Beltrami operator on closed manifolds. More precisely, they showed that\footnote{In contrast to the notation used in \cite{fs}, we adopt the convention that $\sigma_1 = 0$ is the first Steklov eigenvalue.}
\begin{equation}\label{fraser_schoen}
\sigma_2(M,g) \mbox{vol}_g(\partial M)\leqslant nV_{rc}(M,m)^{2/n}\mbox{vol}_g(M)^{\frac{n-2}{n}}.
\end{equation}

In this paper, we employ a unified approach to obtain upper bounds for higher eigenvalues of various elliptic boundary value problems on Riemannian manifolds with boundary. Our framework centers on the Steklov-type problem
\begin{equation*}
\begin{cases}
\begin{array}{rl}
-\Delta u + \mathfrak pu = 0 & \text{in} \ M,  \\
\frac{\partial u}{\partial \eta}-\mathfrak qu = \sigma  u  &   \text{on} \ \partial M,
\end{array}
\end{cases}
\end{equation*}
which incorporates potential functions $\mathfrak p$ and $\mathfrak q$  in both the interior and on the boundary,  providing a versatile setting for spectral analysis.

Our first main result provides an upper bound for higher Steklov eigenvalues (case $\mathfrak{p,q}\equiv0$) in terms of the relative conformal volume and the isoperimetric ratio, generalizing the first Steklov eigenvalue upper bound in \cite[Theorem 6.2]{fs} and providing a boundary analogue of Kokarev's result \cite{kok}.  More precisely, for every $k\geqslant 1$, we show that the $k$-th normalized Steklov eigenvalue of $M$ satisfies
\begin{equation}\label{esti_1_intro}
\bar \sigma_k(M,g) \leq C \frac{V_{rc}(M,m)^{\frac{2}{n}}}{I(M)^{\frac{n-2}{n-1}}}k^{2/n},
\end{equation}
where $C=C(n,m)>0$ and $I(M)=\vol_g(\partial M)/\vol_g(M)^{\frac{n-1}{n}}$ is the classical isoperimetric ratio (see Theorem \ref{thm:letterA}). Estimate \eqref{esti_1_intro} can also be regarded as a variant of Colbois, El Soufi, and Girouard \cite[Theorem 1]{CEG} expressed in terms of the relative conformal volume. For further results on higher Steklov eigenvalues in different geometric contexts, we refer to \cite{GPs, H,Ka}.

The proof of Theorem~\ref{thm:letterA}, and of the main results in this paper, follows the strategy originating in Korevaar’s work \cite{K}, later developed in \cite{gy,gny}, and relies crucially on the construction of test functions supported on disjoint annuli, allowing effective control of higher eigenvalues, see \cite{CEG,kok} and references therein for some applications. Building on ideas from Kokarev \cite{kok} (cf. \cite{SX}), we adapt these techniques using specific conformal diffeomorphisms of the Euclidean ball. A key difficulty in this geometric construction is to maintain control over the Rayleigh quotient while distributing test functions across appropriately chosen annular regions. The main novelty in our strategy is to consider minimal Lipschitz extensions (MLE), as they attain the smallest possible Lipschitz constant among all Lipschitz extensions (cf. \cite{McShane,Wh}).

We then study the spectrum of the conformal Dirichlet-to-Robin operator associated to the Escobar–Yamabe problem on manifolds with boundary \cite{E,E2} (case $\mathfrak{p}= c_nR_{\bar g}$, $\mathfrak{q}= b_nH_{\bar g}$), a setting in which spectral geometry and conformal invariants interact  profoundly. This problem concerns finding conformal metrics with prescribed scalar curvature and mean curvature of the boundary, generalizing the classical Yamabe problem. %solved by N. Trudinger \cite{T}, T. Aubin \cite{A}, and R. Schoen \cite{S}. 
The conformal Dirichlet-to-Robin operator $B_{\bar g}$ exhibits rich spectral behavior, as recently investigated in \cite{Cox}. Motivated by these advances, we prove a Korevaar-type upper bound (Theorem \ref{thm:letterB}) for the spectrum of $B_{\bar g}$ on the Euclidean unit ball $(\mathbb{B}^n,g)$. Specifically, we show that the normalized $k$-th eigenvalue satisfies
\begin{eqnarray*}
\bar\sigma_k(\mathbb{B}^{n},B_{\bar g})\leqslant C(n)k^{\frac{2}{n}} \qquad \text{for all} \ \bar g \in [g],
\end{eqnarray*}
for some positive constant $C(n)$. Furthermore, we generalize the analysis to a broader class of manifolds using the notion of relative conformal volume (Theorem \ref{thm:letterC}), and prove a Hersch-type result (Theorem \ref{thm:letterD}) characterizing the maximizer of the first normalized eigenvalue of $B_{\bar g}$ on the ball, showing that the Euclidean metric is the unique maximizer up to scaling. These results are inspired, and extends the work of Sire and Xu \cite{SX} to manifolds with boundary.

A closely related object of study that has been extensively investigated in the literature \cite{gn,gns,kok,LY2,Lie} is the counting of negative eigenvalues of a Schr\"odinger operator defined on a closed Riemannian manifold. This problem consists in studying the following eigenvalue problem on $M$
$$
-\Delta u - \mathfrak p u = \lambda u,
$$
where $\mathfrak p \in L^\infty(M)$ is a given potential. The number of negative eigenvalues  $\operatorname{Neg}(\mathfrak p)$ for this operator was investigated by Grigor'yan, Nadirashvili, and Sire \cite{gns}, who established the following lower bound
\begin{equation*}
\operatorname{Neg}(\mathfrak p) \ge
\frac{C}{\mu(M)^{\frac{n}{2}-1}}
\left( \int_M \mathfrak p \, d\mu \right)_+^{\frac{n}{2}},
\end{equation*}
where $C>0$ is a constant that, in the case $n=2$, depends only on the genus of \(M\), and for $n>2$ depends only on the conformal class of $M$.

Our last main result extends the preceding eigenvalue counting estimates to a Steklov-type problem, where we derive an upper bound for the number of negative eigenvalues expressed in terms of the relative conformal volume (Theorem~\ref{thm:letterE}). This analysis yields two geometric applications. First, based on the fact that the signs of all its eigenvalues are conformally invariant (Proposition \ref{invariant}), we obtain a lower bound for the number of negative eigenvalues of the conformal Dirichlet-to-Robin map (Theorem \ref{conformal_neg}). Second, we estimate the Morse index\footnote{The Morse index measures the dimension of wetting-area-preserving deformations that decrease the area of a type-II stationary hypersurface.} of stationary hypersurfaces (Theorem \ref{morse_index}) in the type-II partitioning problem introduced by Bokowsky and Sperner \cite{BS}, and Burago and Maz'ya \cite{BM}, which seeks area-minimizing hypersurfaces that partition a domain $\Omega$ into two subdomains with prescribed wetting area (see also \cite{GX,TZ1,TZ2}).

The paper is structured as follows: in Section \ref{sec_tec}, we introduce the Steklov-type eigenvalue problem and its variational formulation. We also discuss several geometric contexts where this problem arises, and the spectral properties of the associated Dirichlet-to-Robin operator. Section \ref{sec_test-functions} is devoted to the construction of the test functions for the Rayleigh quotient which is based on the existence of a large collection of disjoint annuli that carry a controlled mass. In Section~\ref{sec_stek}, we establish upper bounds for higher eigenvalues. After proving a Korevaar-type estimate for normalized Steklov eigenvalues in terms of the relative conformal volume and the isoperimetric ratio (Theorem~\ref{thm:letterA}), we derive analogous bounds for the conformal Dirichlet-to-Robin operator, first on the unit Euclidean ball (Theorem~\ref{thm:letterB}) and then on manifolds admitting proper conformal immersions into $\mathbb{B}^m$ (Theorem~\ref{thm:letterC}). We conclude the section with a Hersch-type result for the first eigenvalue (Theorem~\ref{thm:letterD}). 

Section \ref{negative eigen} focuses on the study of negative eigenvalues of the Steklov-type problem. We derive a lower bound for the number of negative eigenvalues (Theorem \ref{thm:letterE}), and then apply this result to two geometric settings: to the conformal Dirichlet-to-Robin operator (Theorem \ref{conformal_neg}) and to stationary capillary minimal hypersurfaces (Theorem \ref{morse_index}). Finally, in the Appendices, we provide a detailed proof of the conformal invariance of the signs of the eigenvalues of the conformal Dirichlet-to-Robin operator (Proposition \ref{sign}), an upper bound for the relative conformal volume for surfaces that admit a free boundary minimal immersion into the hemisphere $S_+^m$ (Proposition \ref{prop_est_vrc_to_vol}), and a lower bound for the Morse index of stationary capillary minimal hypersurfaces through the spectrum of their Jacobi operator with Robin boundary condition (Theorem \ref{morse_index_thm}).

\medskip
\noindent\textbf{Acknowledgments:} T. Cruz and L. Pessoa thank Lucas Ambrozio and Instituto de Matem\'atica Pura e Aplicada (IMPA) for their hospitality during  the Summer Program 2025, where part of this work was carried out. They are also grateful to Vanderson Lima for useful discussions.

\section{The Steklov-type spectrum}
\label{sec_tec}

Consider a smooth Riemannian manifold $(M,g)$  with compact boundary $\partial M$. In this section, we present variational properties of the eigenvalues of the following Steklov-type eigenvalue problem:
\begin{equation}\label{eq_auxi}
\begin{cases}
\begin{array}{rl}
-\Delta u + \mathfrak pu = 0 & \text{in} \ M,  \\
\frac{\partial u}{\partial \eta}-\mathfrak qu = \sigma  u  &   \text{on} \ \partial M,
\end{array}
\end{cases}
\end{equation}
where  $\eta$ denotes the outward unit normal vector field along $\partial M$, and $\mathfrak p$, $\mathfrak q$ are given potential functions whose regularity will be specified later. The boundary value problem \eqref{eq_auxi} arises in various contexts, including: 
\medskip

\noindent\textbf{The Steklov problem}. %Let $M$ be a compact Riemannian manifold of dimension $n\geq2$ with (possibly non-smooth) boundary $\partial M$. 
The Steklov problem on $(M,g)$ is given by 
    \begin{equation}
\begin{cases}\label{stek_sec}
\begin{array}{rl}
\Delta_g u = 0 & \text{in} \ M,  \\
\frac{\partial u}{\partial \eta_g} = \sigma  u  &   \text{on} \ \partial M.
\end{array}
\end{cases}
\end{equation}
The Steklov eigenvalues arise as the spectrum of the Dirichlet-to-Neumann map  $D \colon H^{\frac{1}{2}}(\partial M) \to H^{-\frac{1}{2}}(\partial M)$, sometimes known as the voltage-to-current operator, which maps $u \mapsto \partial \bar{u}/\partial \eta,$ where $\bar{u}$ is the harmonic extension
of $u$ to $M$, see \cite{CGGS, GPs}. This map is a first-order elliptic pseudodifferential operator that is self-adjoint. Its spectrum is discrete, nonnegative, and unbounded from above. The boundary problem \eqref{stek_sec} has deep connections and strong consequences for the study of free boundary minimal hypersurfaces, see \cite{fs} for example.\medskip

\noindent\textbf{Escobar-Yamabe problem}. The Yamabe problem for a closed Riemannian manifold consists finding a metric of constant scalar curvature within a given conformal class. This generalizes the uniformization theorem for Riemann surfaces and was solved through the combined work of Trudinger \cite{T}, Aubin \cite{A}, and Schoen \cite{S}. If the compact manifold being considered has a nonempty boundary, Escobar proposed a similar problem in \cite{E1}. To fix notation, we denote by $R_g$ the scalar curvature of $(M,g)$ and by $H_g$ the mean curvature of the boundary $\partial M$. There are two natural ways to extend the Yamabe problem to manifolds with boundary:
\begin{itemize}
    \item[(I)] find a metric $\bar{g}$ in the conformal class of $g$ such that $R_{\bar{g}}$ is constant and $H_{\bar{g}}=0$;
    \item[(II)] find a metric $\bar{g}$ in the conformal class of $g$ such that $R_{\bar{g}}=0$ and $H_{\bar{g}}$ is constant.
\end{itemize}
Problem (II) was solved in a series of works by Escobar \cite{E, E1}, Marques \cite{Ma}, Chen \cite{CH}, and Mayer and Ndiaye \cite{MN}.

From the PDE perspective, problem (II) is closely related to the following eigenvalue problem associated with the conformal Dirichlet-to-Robin map:
\begin{equation}\label{esc}
\begin{cases}
\begin{array}{rl}
-\Delta_g u + \frac{n-2}{4(n-1)} R_gu = 0 & \text{ in }\  M, \\[0.2cm]
\displaystyle\frac{\partial u}{\partial\eta_g} + \frac{n-2}{2} H_gu=\sigma u & \text{ on } \ \partial M.
\end{array}
\end{cases} 
\end{equation} 
This boundary value problem is a special case of the broader class of conformally covariant maps, and it has been extensively studied in recent years (see, e.g. \cite{Cox, Ho, HP,SX} and the references therein).
\medskip

\noindent\textbf{The Jacobi-Steklov problem}. 
Let $\Omega$ be a convex body, that is, a compact convex set with non-empty interior. Two types of partitioning problems for convex bodies have been extensively studied  by Bokowsky and Sperner \cite{BS} and, in the special case $\Omega=\mathbb B^{n+1}\subset \mathbb{R}^{n+1}$, by Burago and Maz'ya \cite{BM}.
\medskip

\noindent\textit{Type-I partitioning problem:} find an area-minimizing hypersurface among all hypersurfaces in $\Omega$ that divide $\Omega$ into two disjoint subdomains $\Omega_1$ and $\Omega_2$ with  prescribed volumes
$$
\left|\Omega_1\right|=s|\Omega| \text { and }\left|\Omega_2\right|=(1-s)|\Omega|, \quad\quad s \in(0,1).
$$

\noindent\textit{Type-II partitioning problem:} find an area-minimizing hypersurface among all hypersurfaces in $\Omega$ that divide $\Omega$ into two disjoint subdomains, $\Omega_1$ and $\Omega_2$, with prescribed wetting areas, namely,
$$
\left|\Omega_1\cap \partial \Omega\right|=s|\Omega| \text { and }\left|\Omega_2\cap \partial \Omega\right|=(1-s)|\Omega|, \text { for some } s \in(0,1).
$$

In this work we focus on the Type-II partitioning problem. Geometrically, stationary hypersurfaces for the type-II partitioning problem in $\Omega$ are capillary minimal hypersurfaces, i.e., minimal hypersurfaces intersecting $\partial \Omega$ at a constant contact angle $\theta\in (0,\pi)$. In \cite{BS} and \cite{BM}, classification results and estimates for the associated isoperimetric ratio were obtained for general convex bodies.
Associated with the Type-II partitioning problem is the following Jacobi–Steklov eigenvalue problem (see also \cite{GX,Zhu}, and \cite{ACS2,BPM} for the case $\theta=\pi/2$)
\begin{equation*}
\begin{cases}
\begin{array}{rl}
\Delta_{M
} u + (\text{Ric}^{\Omega}(\nu,\nu)+|A^{M}|^2)u = 0 & \text{in} \ M,  \\
\displaystyle\frac{\partial u}{\partial \eta_g}-\left(\csc\theta A^{\partial \Omega} (\bar{\nu}, \bar{\nu}) + \cot \theta \, A^M(\eta, \eta)\right)u = \sigma u  &   \text{on} \ \partial M,
\end{array}
\end{cases}
\end{equation*}
 where $\text{Ric}^{\Omega}(\nu,\nu)$ is the Ricci curvature of $\Omega$ in the direction of the unit normal $\nu$ along $M$, $A^{M}$ and $A^{\partial \Omega}$ are, respectively, the second form fundamental of $M$ and $\partial \Omega$, $\eta$ is the unit outward normal to $\partial M$ in $M$, and  $\bar{\nu}$ is the unit normal to $\partial M$ in $\partial \Omega$. \medskip

With these motivations in place, we now turn to the variational characterization of the eigenvalues associated with the Steklov-type problem \eqref{eq_auxi} defined on an $n$-dimensional compact Riemannian manifold $(M,g)$ with non-empty boundary $\partial M$. Throughout the paper, we assume that the potential functions $\mathfrak p$ and $\mathfrak{q}$ satisfy the following regularity conditions:
\begin{equation}\label{regul_p_q_potent}
\mathfrak{p} \in L^p(M), \ \text{for } \ p>\frac{n}{2} \quad \text{and} \quad \mathfrak{q} \in L^\infty(\partial M).
\end{equation}

Let $\mathfrak L_g \doteq -\Delta_g + \mathfrak p$, and assume that 
$0$ is not a Dirichlet eigenvalue of $\mathfrak L_g$. 
Under this assumption, for any $u\in L^2(\partial M)$ for which there exists 
$\hat u\in H^1(M)$ satisfying 
\[
\operatorname{Tr}\hat u = u
\quad\text{and}\quad
\mathfrak L_g \hat u = 0 \ \text{in the weak sense},
\]
the function $\hat u$ is unique (e.g. \cite{AM,TRAN}).  
Here $\operatorname{Tr}:H^1(M,g)\to L^2(\partial M,g)$ denotes the trace operator, 
whose range is the standard trace space $H^{1/2}(\partial M)\subset L^2(\partial M)$. We recall that, for $u \in L^{2}(\partial M)$, a function $\hat u \in H^{1}(M)$ with $\operatorname{Tr}\hat u = u$ is said to satisfy $\mathfrak{L}_g \hat u = 0$ in the weak sense if
\begin{equation}\label{weak_form_Lg}
\int_M \langle\nabla \hat u , \nabla \varphi\rangle_g \, dv_g
+ \int_M \mathfrak p\, \hat u\, \varphi \, dv_g = 0,
\qquad
\forall\, \varphi \in H^1_0(M).
\end{equation}

We are thus naturally led to define the boundary operator associated to 
\eqref{eq_auxi} only on those boundary functions $u\in L^2(\partial M)$ that admit 
a weak solution $\hat u\in H^1(M)$ of the equation $\mathfrak{L}_g \hat u = 0$.  
This gives rise to a Dirichlet-to-Robin type operator defined on a dense subspace of $L^2(\partial M)$ which is formally defined as follows:

\begin{equation*}
\begin{aligned}
\mathrm{dom}(\mathfrak{B}_g)
= \Big\{&
u \in L^2(\partial M , g) \colon \,
\exists\, \hat u \in H^1(M , g)
\ \text{such that}\ 
\operatorname{Tr}\hat u = u, \
\mathfrak{L}_g \hat u = 0\\ & \ \text{in the weak sense, and }   \partial_\eta \hat u - \mathfrak qu \in L^2(\partial M, g)\Big\},
\end{aligned}
\end{equation*}
and, for $u\in \mathrm{dom}(\mathfrak B_g)$, we set
\[
\mathfrak B_g u \doteq \partial_\eta \hat u - \mathfrak q u.
\]

Since $\mathfrak q\in L^\infty(\partial M)$ and $u\in L^2(\partial M)$, the product 
$\mathfrak qu$ belongs to $L^2(\partial M)$.  
We say that the boundary expression 
\[
\partial_\eta \hat u - \mathfrak q u
\quad\text{belongs to } L^2(\partial M)
\]
if there exists $\psi\in L^2(\partial M)$ such that, for all $\varphi\in H^1(M)$,
\begin{equation*}\label{weak_boundary_identity_final}
\int_M \langle\nabla \hat u , \nabla \varphi\rangle_g \, dv_g
+ \int_M \mathfrak p \hat u\, \varphi \, dv_g
- \int_{\partial M} \mathfrak q u \, \operatorname{Tr}\varphi \, da_g
=
\int_{\partial M} \psi  \operatorname{Tr}\varphi \, da_g.
\end{equation*}

\medskip

A number $\sigma \in \mathbb{R}$ is called an eigenvalue of the Steklov-type problem \eqref{eq_auxi} if there exists 
$u \in \mathrm{dom}(\mathfrak{B}_g)$, $u \not\equiv 0$, such that
\[
\mathfrak{B}_g u = \sigma u.
\]

\medskip

Associated to the boundary operator $\mathcal{B}_g$, we consider the bilinear form
\[
\mathfrak{a} (u,v)
=
\int_M \Bigl\langle \nabla \hat u , \nabla \hat v \Bigr\rangle_g \, dv_g
+
\int_M \mathfrak p\, \hat u\, \hat v \, dv_g
-
\int_{\partial M} \mathfrak q\, u\, v \, da_g,
\]
defined for $u,v \in H^{1/2}(\partial M , g)$, where $\hat u,\hat v$ are the
corresponding weak solutions of $\mathfrak L_g \hat u = \mathfrak L_g \hat v = 0$
with boundary traces $u$ and $v$, respectively. The form
$\mathfrak{a}$ is symmetric and the associated quadratic form
\[
\mathfrak{a}[u] \doteq \mathfrak{a}(u,u)
\]
is closed on $H^{1/2}(\partial M , g)$. Since the trace operator is compact, it follows that $\mathfrak{B}_g$ has compact resolvent. Hence its spectrum is discrete. Moreover, adapting the arguments from \cite[Theorem 3.1]{AM00}, one can show that $\mathfrak{B}_g$ is also bounded from below.

In summary, if $0$ is not an eigenvalue of $\mathfrak{L}_g$,
the boundary operator $\mathfrak B_g$ is self-adjoint, bounded from below and its spectrum consists of a sequence of real eigenvalues of finite
multiplicity diverging to $+\infty$:
$$
\sigma_{1}(M,\mathfrak{B}_g) \leq \sigma_{2}(M,\mathfrak{B}_g) \leq \ldots \leq \sigma_{k}(M,\mathfrak{B}_g) \leq \cdots\nearrow\infty,
$$
where, as usual, each eigenvalue is repeated according to its multiplicity. The min-max principle then yields the variational characterization of the
eigenvalues %.  Writing $\hat u$ for the $\mathfrak L_g$–harmonic extension of $u$, we have
\begin{equation*}
\sigma_{1}(M,\mathfrak{B}_g)=\inf \left\{\displaystyle \mathfrak{a}(u,u)/ \vert\vert u\vert\vert_{L^2(\partial M)}^{2} \  \colon u \in \text{dom}(\mathfrak{B}_g)\backslash \{0\}\right\},
\end{equation*}
and for $k\geq 2$
\begin{equation*}
\sigma_{k}(M,\mathfrak{B}_g)=\inf_{u \in \text{dom}(\mathfrak{B}_g)\backslash \{0\}} \left\{\mathfrak{a}(u,u)/ \vert\vert u\vert\vert_{L^2(\partial M)}^{2} \  \colon  \int_{\partial M}  u \phi_{j}da_g = 0\right\},
\end{equation*}
where $\phi_{j}$ are the eigenfunctions corresponding to the eigenvalue $\sigma_{j}(M,\mathfrak{B}_g)$, for $j=1, \ldots, k-1$.

\begin{remark}
When $0$ lies in the spectrum of $\mathfrak{L}_g$, the Dirichlet-to-Robin map and its spectrum can nevertheless be defined rigorously using the hidden compactness theory of Arendt, Elst, Kennedy, and Sauter \cite{AEKS} (see also \cite{AM00,AM,Cox,TRAN,TZ2}). We refer to \cite[Section~6]{Zhu} for a detailed exposition of this framework in the present setting.
\end{remark}

Given any real number $\sigma$,  the \emph{counting function} $\mathcal{N}_{\sigma}(\mathfrak{p},\mathfrak{q})$ is defined as the supremum of the dimensions of all vector spaces $V \subset C^{\infty}(M)$ such that
$$
\mathfrak{a}[\varphi] < \sigma \|\varphi\|_{L^{2}(M)}^2 \quad \text{for all } \varphi \in V, \, \varphi \not\equiv 0.
$$

Since $\mathfrak{B}_g$ with domain $C^{\infty}(M)$ admits an $L^{2}$-self-adjoint extension (see \cite{AM00} for details), we have
$$\mathcal{N}_{\sigma}(\mathfrak{p},\mathfrak{q}) = \dim \operatorname{Im} \mathbf{1}_{(-\infty, \sigma)}(\mathfrak{L}_g;
\mathfrak{B}_g).
$$
Hence, $\mathcal{N}_{\sigma}(\mathfrak{p},\mathfrak{q})$ represents the number of eigenvalues of \eqref{eq_auxi} that are smaller than $\sigma$, counted with multiplicity. In particular, for $\sigma = 0$, we use the notation
$$\textrm{Neg}(\mathfrak{p},\mathfrak{q})=\mathcal{N}_0(\mathfrak{p},\mathfrak{q}),$$
which denotes the number of negative eigenvalues of the Steklov-type problem.

For the purposes of this article, we would like to consider test functions for the Rayleigh quotient associated with $\mathfrak B_g$ in the whole space $H^1(M)$. However, as observed by Escobar in his addendum \cite{E-addendum}, the lower boundedness for the spectrum of the Dirichlet-to-Robin operator $\mathfrak{B}_g$ in $H^1(M)$ is completely
determined by the positivity of the first Dirichlet eigenvalue $\lambda_1(\mathfrak L_g^D)$ of the interior
operator $\mathfrak L_g$ (see also \cite[Remark 1]{LM}).

Based on Escobar’s observation, throughout this paper we shall assume that $\lambda_1(\mathfrak L_g^D)>0$. Under this hypothesis, we can take advantage of the following alternative characterization of the eigenvalues

\begin{equation*}
\sigma_{1}(M,\mathfrak{B}_g)=\inf \left\{\frac{\displaystyle \int_{
M}(|\nabla u|^{2}+\mathfrak pu^2) dv_g-\int_{\partial M}\mathfrak qu^2da_g}{\displaystyle\int_{\partial M}  u^{2} da_g} \colon u \in H^1(M), \text{Tr}\,u \neq 0\right\},
\end{equation*}
and for $k\geq 2$

\begin{equation*}
\sigma_{k}(M,\mathfrak{B}_g)=\inf \left\{\frac{\displaystyle \int_{
M}(|\nabla u|^{2}+\mathfrak pu^2) dv_g-\int_{\partial M}\mathfrak qu^2da_g}{\displaystyle\int_{\partial M}  u^{2} da_g}  \colon  \begin{array}{c} u \in H^1(M), \text{Tr}\,u \neq 0 \\[0.1cm] \text{and} \displaystyle \int_{\partial M}  u \phi_{j}da_g = 0 \end{array}\!\! \right\}\!,
\end{equation*}
where $\phi_{j}$ are the eigenfunctions corresponding to the eigenvalue $\sigma_{j}(M,\mathfrak{B}_g)$, with $j=1, \ldots, k-1$.  

This alternative characterization gives rise to the corresponding Courant-Fischer min-max principle 
\begin{equation*}
\sigma_k(M,\mathfrak{B}_g)
=
\min_{\substack{E \subset H^1(M) \\ \dim E = k}}
\;
\max_{\substack{u \in E \\ \operatorname{Tr} u \neq 0}}
\frac{\displaystyle \int_M \bigl(|\nabla u|^2 + \mathfrak{p} u^2\bigr)\, dv_g
- \int_{\partial M} \mathfrak{q} u^2\, da_g}
{\displaystyle \int_{\partial M} u^2\, da_g},
\end{equation*}
which constitutes the key tool in the strategy originating in \cite{K} for estimating high-order eigenvalues and will be implemented throughout this work.

\section{Technical aspects of the test functions}\label{sec_test-functions}

Throughout this paper, we will make use of special test functions adapted to the boundary problems under consideration. Their construction originates from the celebrated work of Korevaar \cite{K} and served as an inspiration for Kokarev \cite{kok}, who applied specific conformal diffeomorphisms of the sphere $\mathbb{S}^m$ to build suitable test functions.

A key point in the construction is the existence of a large collection of disjoint annuli carrying a controlled amount of mass. As shown by Grigor’yan and Yau \cite{gy}, this existence is guaranteed in a large class of metric spaces satisfying a global covering property, provided that the finite measure is non-atomic\footnote{$\mu \in \mathcal{M}^+(X)$ is non-atomic if, for any measurable set $A$ with $\mu(A)>0$, there exists a measurable subset $B$ of $A$ such that $\mu(A)>\mu(B)>0$.}.

In what follows, by an annulus in a manifold $N$ we mean any subset $A\subset N$ of the following form
$$A = \{x\in N\colon  r<d(x,a)<R\},$$
where $a \in N$ and $0 \leq r < R < +\infty$. We also denote by $2A$ the annulus
$$2A = \{x\in N \colon r/2<d(x,a)<2R\}.$$

The following proposition, due to Grigor’yan,  Netrusov, and Yau \cite[Theorem .~1]{gny}, lies at the core of the strategy for constructing disjoint annuli.

\begin{proposition}[Grigor’yan–Netrusov–Yau {\cite{gny}}]\label{non-atom}
Let \(N\) be a complete manifold satisfying the following covering property:
there exists a constant \(I > 0\) such that every ball of radius \(r\) in \(N\) can be covered by at most \(I\) balls of radius \(r/2\).
Let \(\mu\) be a non-atomic Radon measure on \(N\).
Then, for any positive integer \(k\), there exists a collection of annuli \(\{A_i\}_{i=1}^k\) in \(N\) such that the annuli \(2A_i\) are pairwise disjoint and, for each \(i = 1, \dots, k\),
\begin{equation*}
\mu(A_i) \ge c\, \frac{\mu(N)}{k},    
\end{equation*}
where \(c > 0\) is a constant depending only on \(I\).
\end{proposition}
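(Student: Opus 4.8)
The plan is to prove Proposition~\ref{non-atom} by the recursive decomposition scheme of Grigor'yan, Netrusov, and Yau \cite{gny} (building on \cite{gy}), which arranges the mass of $\mu$ along a tree of metric balls at dyadically decreasing scales. In the applications $\mu(N)<\infty$, so I would assume this and, after rescaling, take $\mu(N)=1$. The first step is to record the two elementary consequences of the covering hypothesis used throughout: iterating it, every ball $B(x,R)$ is covered by at most $I^{j}$ balls of radius $2^{-j}R$, and, dually, $B(x,R)$ contains at most $\Lambda=\Lambda(I)$ pairwise disjoint balls of radius $R/4$. The second preliminary is measure-theoretic: for each fixed center $a$ the non-decreasing function $t\mapsto\mu(\overline{B(a,t)})$ has at most countably many jumps (the radii of spheres of positive $\mu$-measure), so, using non-atomicity, the radii defining the annuli below can always be chosen so as to avoid those exceptional radii and to realize prescribed intermediate mass values.

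The heart of the construction is a ``concentrated versus spread'' dichotomy applied recursively, starting from a ball $B_{0}$ carrying all but an arbitrarily small fraction of $\mu$ together with a budget $k$ of annuli to be produced. A ball $B=B(a,R)$ with a budget $\ell\ge 2$ is processed as follows. If there is a sub-ball $B'$ of radius at most $R/\lambda_{0}$, for a fixed large $\lambda_{0}=\lambda_{0}(I)$, with $\mu(B')\ge\tfrac12\mu(B)$ --- the \emph{concentrated} case --- one passes to the smallest such ball, harvests a controlled annular shell around $B'$ lying inside $B$ as one output annulus when its mass is a definite fraction of $\mu(B)$, and recurses into $B'$ at the smaller scale with the remaining budget; since $2B'$ sits well inside $B$, the harvested shell and all annuli later produced inside $B'$ automatically have pairwise disjoint doublings. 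In the \emph{spread} case the packing bound $\Lambda(I)$ lets one split a large-$\mu$ portion of $B$ into boundedly many disjoint sub-balls whose doublings are still disjoint, apportion the remaining budget among them in proportion to their masses, and recurse into each. Iterating yields a rooted tree of balls; the required annuli are read off from its leaves together with the shells harvested at concentrated nodes, and disjointness of the associated doubled annuli is guaranteed by construction, since every recursion step descends into regions separated by a concentric buffer of relative width bounded below by a constant.

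The step I expect to be the main obstacle is the bookkeeping on this tree: choosing the thresholds --- $\lambda_{0}$, the fraction defining ``large mass'', and the rule for splitting the budget --- so that at least $k$ of the produced annuli survive with $\mu(A_{i})\ge c(I)\,\mu(N)/k$, which in particular requires controlling the $\mu$-mass lost to the concentric buffer regions at each step. Here non-atomicity is what allows the recursion to keep descending to arbitrarily fine scales while still realizing the prescribed mass fractions, so that the process never gets stuck; the branching at spread nodes is capped by $\Lambda(I)$, so the number of leaves at a given depth is controlled; and a level-by-level accounting --- a pigeonhole argument distributing the total mass $\mu(N)=1$ across the leaves --- yields the lower bound on both the number and the mass of the surviving annuli, with $c$ depending only on $I$. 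This combinatorial core, which is essentially the entire content of \cite[Theorem~1]{gny}, is where the work lies, and I would carry it out along the lines just sketched.
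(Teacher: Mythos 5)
The paper does not prove this proposition: it is stated as a citation, namely \cite[Theorem~1]{gny}, and used as a black box throughout. So there is no ``paper's own proof'' to compare against. Your sketch is explicitly a plan to reprove \cite[Theorem~1]{gny} by the very argument of that reference, and you yourself flag that the decisive step --- the level-by-level bookkeeping on the recursion tree, i.e.\ choosing the thresholds $\lambda_{0}$ and the mass fraction, splitting the budget among children at spread nodes, and accounting for the mass lost to the concentric buffers so that at least $k$ annuli survive with $\mu(A_i)\gtrsim \mu(N)/k$ --- is not carried out. That is not a minor gap: it is the entire quantitative content of the theorem, and without it what you have written is a correct and reasonably faithful summary of the strategy in \cite{gny}, not a proof. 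As a sanity check on the parts you do state, the two consequences of the covering hypothesis (iterated covering by $I^j$ balls, and the packing bound $\Lambda(I)$) are right, the role of non-atomicity in allowing one to hit prescribed intermediate mass values is the correct use of that hypothesis, and the reduction to $\mu(N)<\infty$ is harmless here since for a complete manifold Hopf--Rinow makes annuli relatively compact, so the inequality is vacuous otherwise. If the intent is to match the paper, a citation suffices; if the intent is a self-contained proof, the combinatorial core you deferred is exactly where it must be supplied.
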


For instance, the global $I$-covering property holds for all compact Riemannian manifolds and for all complete Riemannian manifolds with nonnegative Ricci curvature.

We now turn our attention to the construction of the test functions for the Rayleigh quotient associated with the Steklov-type problem \eqref{eq_auxi}, defined on an $n$-dimensional compact manifold $M$ with boundary $\partial M$ by
\begin{equation}\label{Rayleigh_quo_1}
    \mathcal{R}(u)=\frac{\displaystyle\int_{
M}(|\nabla u|^{2}+\mathfrak pu^2) dv_g-\int_{\partial M}\mathfrak qu^2da_g}{\displaystyle \int_{\partial M}  u^{2} da_g}.
\end{equation}
The test functions will be supported in pairwise disjoint annuli, chosen in correspondence with the family of disjoint annuli provided by Proposition~\ref{non-atom}.

Assume that $M$ admits a proper conformal map $\psi \colon M \rightarrow \mathbb{B}^m$, where $\mathbb{B}^m$ denotes the standard unit ball in $\mathbb{R}^m$.
Given a point $\theta\in \mathbb S^{m-1}$ and $t>0$, let $f_{\theta}(t) \colon \mathbb S^{m-1}\to \mathbb S^{m-1}$ be the conformal diffeomorphism given by
\begin{equation}
\label{def:f}
f_{\theta}(t) \doteq f^{-1}_\theta\circ s_t\circ f_\theta,
\end{equation}
where $f_\theta \colon \mathbb S^{{m-1}}\backslash\{\theta\}\to\mathbb R^{m-1}$ is the stereographic projection from the point $\theta$ onto the hyperplane $\{x\in\mathbb R^{m}\colon \langle x,\theta\rangle=0\}$, and $s_t \colon \mathbb R^{m-1}\to\mathbb R^{m-1}$ is the dilation $s_t(v)=tv$. For a fixed $R\in (0,\pi/2)$, one can choose a parameter $t=t(R)>0$ such that $f_{\theta}(t)$ maps the open ball $\tilde B_{2R}(\theta) \subset \mathbb{S}^{m-1}$ conformally onto the hemisphere $\mathbb{S}^{m-1}_+$ determined by $\theta$.

The first main step in our construction is to build a conformal extension of the function \(f_\theta(t)\) to the unit ball \(\mathbb{B}^m\).
Following \cite{E2}, we begin by considering the conformal map \(F \colon \mathbb{B}^m \to \mathbb{R}^m_+\) defined by
\begin{equation}\label{def_F}
F(x_1,\ldots,x_m) = \left(\frac{2x'}{(1+x_m)^2 + \lvert x'\rvert^2}, \frac{1-\lvert x\rvert^2}{(1+x_m)^2 + \lvert x'\rvert^2}\right),
\end{equation}
where \(x' = (x_1,\ldots,x_{m-1})\).
Its inverse map \(F^{-1} \colon \mathbb{R}^m_+ \to \mathbb{B}^m\) is given by
\[
F^{-1}(y_1,\ldots,y_{m-1},t) = \left(\frac{2y}{(1+t)^2 + \lvert y\rvert^2}, \frac{1 - t^2 - \lvert y\rvert^2}{(1+t)^2 + \lvert y\rvert^2}\right),
\]
where \(y = (y_1,\ldots,y_{m-1})\).
Observe that the restriction of \(F\) to the sphere is the stereographic projection from the south pole \(\theta_0 = (0,\ldots,0,-1) \in \mathbb{S}^{m-1}\), that is, \(F(\mathbb{S}^{m-1} \backslash \{\theta_0\}) = \partial \mathbb{R}^m_+\).
Moreover, \(F(\mathbb{D}^{m-1}) = \mathbb{S}^{m-1}_+ \subset \mathbb{R}^m_+\), where \(\mathbb{D}^{m-1} \subset \mathbb{B}^m\) denotes the equatorial disk.
For a given \(\theta \in \mathbb{S}^{m-1}\), composing \(F\) with a rotation we obtain an extension \(F_\theta\) of the stereographic projection \(f_\theta\) from the south pole \(\theta\).
We then define \(F_\theta(t) \colon \mathbb{B}^m \to \mathbb{B}^m\) by
\[
F_\theta(t) = F_\theta^{-1} \circ s_t \circ F_\theta,
\]
for \(t>0\) and \(\theta \in \mathbb{S}^{m-1}\), where \(s_t\) is the same dilation used in the definition of \(f_\theta(t)\).

For ease of notation, we write \(\mathcal{B}_{R_i} = B_{R_i} \cap \mathbb{B}^m\), and decompose its boundary as  
\[
\partial \mathcal{B}_{R_i} = \partial_0 \mathcal{B}_{R_i} \cup \partial_1 \mathcal{B}_{R_i},
\]
where
\[
\partial_0 \mathcal{B}_{R_i} = \partial \mathcal{B}_{R_i} \cap \operatorname{int}\mathbb{B}^m
\quad \text{and} \quad
\partial_1 \mathcal{B}_{R_i} = \partial \mathcal{B}_{R_i} \cap \mathbb{S}^{m-1}.
\]
For an annulus \(A \subset \mathbb{R}^m\), we also set \(\mathcal{A} = A \cap \mathbb{B}^m\).

For each annulus \(A_i = B_{R_i} \backslash B_{r_i} \subset \mathbb{R}^m\), let  
\(\tilde{A}_i = A_i \cap \mathbb{S}^{m-1}\) denote the annulus on the sphere centered at a point \(\theta_i \in \mathbb{S}^{m-1}\), with inner radius \(\tilde r_i \ge 0\) and outer radius \(\tilde R_i\), that is, \(\tilde{A}_i = \tilde B_{\tilde R_i} \backslash \tilde B_{\tilde r_i}\).  
For given \(\theta_i \in \mathbb{S}^{m-1}\) and \(\tilde R_i \in (0, \pi/2)\), we consider the conformal diffeomorphism \(f_{\theta_i}(t_i)\) such that  
\(f_{\theta_i}(t_i)(\tilde B_{2R_i}) = \mathbb{S}^{m-1}_+\),  
and denote its conformal extension to the unit ball \(\mathbb{B}^m\) by \(F_i \doteq F_{\theta_i}(t_i)\).

Let $\alpha_i \colon \partial F_i(\mathcal{B}_{2R_i}) \rightarrow \mathbb{R}$ be the $1$-Lipschitz function given by
$$\alpha_i(y)=\left\{
\begin{array}{rl}
\langle y,\theta_i \rangle & \text{ if~~} y\in \partial_1(F_i(\mathcal{B}_{2R_i})) = \mathbb{S}^{m-1}_+ ,\\[0.2cm]
0 & \text{ if~~} y \in \partial_0(F_i(\mathcal{B}_{2R_i})).
\end{array}
\right.
$$ 
We then take a Lipschitz extension of $\alpha_i$, that is, a function $\beta_i \colon F_i(\mathcal{B}_{2R_i}) \rightarrow \mathbb{R}$ such that 
$$ \text{Lip}_{F_i(\mathcal{B}_{2R_i})}(\beta_i) = \text{Lip}_{\partial F_i(\mathcal{B}_{2R_i})}(\alpha_i).$$
For instance, we can consider McShane's extension (see \cite{McShane}) defined by
$$\beta_i(x) = \sup_{y \in \partial F_i(\mathcal{B}_{2R_i})}\left\lbrace \alpha_i(y) - \vert x - y\vert\right\rbrace.$$
Finally, we extend $\beta_i$ to the whole ball $\mathbb{B}^m$ by setting $\beta_i(y) = 0$ for $y \notin F_i(\mathcal{B}_{2R_i})$. Note that $\beta_i$ is a $1$-Lipschitz function defined on $\mathbb{B}^m$ and supported in $F_i(\mathcal{B}_{2R_i})$.

As a last step, we define $\varPhi_i \colon \mathbb{B}^m \rightarrow \mathbb{R}$ by  $$\varPhi_i \doteq \beta_i \circ F_i .$$
Then the support of $\varPhi_i$ is contained in $\mathcal{B}_{2R_i}$. Moreover, the restriction of $\varPhi_i$ to $\mathbb{S}^{m-1}$ coincides with the function $\varphi_i \doteq \varphi_{R_i,\theta_i} \colon \mathbb{S}^{m-1} \rightarrow \mathbb{S}^{m-1}$ introduced by Kokarev \cite{kok}, defined by
\begin{equation}
\label{testf}
\varphi_i(x)=\left\{
\begin{array}{cc}
\langle f_{\theta_i}(t)(x),\theta_i \rangle & \text{ if~~} x\in \tilde B_{2R_i}(\theta_i),\\[0.1cm]
0 & \text{ if~~} x \notin \tilde B_{2R_i}(\theta_i),
\end{array}
\right.
\end{equation}
where $\tilde B_{2R_i}(\theta_i) \subset \mathbb{S}^{m-1}$ denotes the spherical ball of radius $2R_i$ centered at $\theta_i$.

Similarly, we define a function \(\bar \varPhi_i \colon \mathbb{B}^m \to \mathbb{R}\) by  
\[
\bar \varPhi_i \doteq \bar \beta_i \circ F_i,
\]  
where \(\bar \beta_i \colon \mathbb{B}^m \to \mathbb{R}\) is a 1-Lipschitz extension of the function
\[
\bar \beta_i(y) =
\begin{cases}
0 & \text{if } y \in \partial_0(F_i(\mathcal{B}_{\frac{r_i}{2}})), \\[0.1cm]
- \langle y, \theta_i \rangle & \text{if } y \in \partial_1(F_i(\mathcal{B}_{\frac{r_i}{2}})) = \mathbb{S}^{m-1}_+,
\end{cases}
\]  
and we set \(\bar \beta_i(y) = 0\) for \(y \notin F_i(\mathcal{B}_{\frac{r_i}{2}})\).  
Then \(\bar \varPhi_i\) is supported in the set \(\mathbb{B}^m \backslash \mathcal{B}_{\frac{r_i}{2}}\), and its restriction to \(\mathbb{S}^{m-1}\) coincides with the function \(\bar \varphi_i \doteq \bar \varphi_{r_i, \theta_i} \colon \mathbb{S}^{m-1} \to \mathbb{R}\) defined in \cite{kok} as
\begin{equation}\label{testf2}
\bar \varphi_i(x) =
\begin{cases}
0 & \text{if } x \in \tilde B_{\frac{r_i}{2}}(\theta_i), \\[0.1cm]
-\langle f_{\theta_i}(\tau)(x), \theta_i \rangle & \text{if } x \notin \tilde B_{\frac{r_i}{2}}(\theta_i),
\end{cases}
\end{equation}
where \(\tilde B_{\frac{r_i}{2}}(\theta_i) \subset \mathbb{S}^{m-1}\) is the spherical ball of radius \(\frac{r_i}{2}\) centered at \(\theta_i\).

For each $i = 1, \ldots, k+1$, we define the test function $u_i \colon M \rightarrow \mathbb{R}$ for the Rayleigh quotient as 
\begin{equation}\label{test_fuction_u}
u_i=\left\{
\begin{array}{rl}
(\varPhi_i\bar\varPhi_i) \circ \psi, &\text{ if~~ } \tilde r_i>0,\\[0.2cm]
\varPhi_i \circ \psi, & \text{ if~~ } \tilde r_i=0.
\end{array}
\right.
\end{equation}
%These functions will be used as a test-function for the Rayleigh quotient on $\partial M$.

\begin{lemma}\label{main_lemma_test}
For each $i=1,\ldots,k+1$, the function $u_{i}$ is Lipschitz and satisfies: 
    \begin{enumerate}
        \item [i)] $0\leq u_{i}(x)\leq1$ for every $x\in M$;
        \item [ii)] $u_{i}(x)\geq\dfrac{9}{25}$ for every $x\in \psi^{-1}(2\tilde{A}_{i}) = \psi^{-1}(2\mathcal{A}_{i}\cap \mathbb{S}^{m-1})$;
        \item [iii)] $\supp{u_{i}}\subset \psi^{-1}(2\mathcal{A}_{i})$.
\end{enumerate}
\end{lemma}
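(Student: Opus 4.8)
The plan is to separate Lemma~\ref{main_lemma_test} into a soft part, concerning the minimal Lipschitz extension to $\mathbb{B}^m$ and the conformal data, and a quantitative part, which is Kokarev's computation on the sphere. On $\mathbb{S}^{m-1}$ the functions $\varPhi_i$ and $\bar\varPhi_i$ restrict, by construction, to Kokarev's functions $\varphi_i$ and $\bar\varphi_i$ from \eqref{testf} and \eqref{testf2}, and the only properties of the latter that enter are those established in \cite{kok}: that $0\le\varphi_i,\bar\varphi_i\le1$; that $\supp\varphi_i\subset\tilde B_{2R_i}(\theta_i)$ and $\supp\bar\varphi_i\subset\mathbb{S}^{m-1}\setminus\tilde B_{r_i/2}(\theta_i)$; and that $\varphi_i\ge\tfrac{3}{5}$ and $\bar\varphi_i\ge\tfrac{3}{5}$ on the corresponding intermediate caps. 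Granting these, I would deduce (i)--(iii) by bookkeeping.

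\textbf{Lipschitz regularity and the bound (i).} The map $\psi$ is smooth on the compact manifold $M$, so $\Lip(\psi)<\infty$; each $F_i$ is the restriction to $\overline{\mathbb{B}^m}$ of a M\"obius transformation, hence a diffeomorphism of $\overline{\mathbb{B}^m}$ with $\Lip(F_i)<\infty$; and $\beta_i$ is $1$-Lipschitz on $\mathbb{B}^m$, since McShane's formula gives a $\Lip(\alpha_i)=1$ extension on $F_i(\mathcal{B}_{2R_i})$ and setting $\beta_i=0$ outside $F_i(\mathcal{B}_{2R_i})$ preserves this constant, because $\alpha_i$ vanishes on $\partial_0 F_i(\mathcal{B}_{2R_i})$, the only portion of $\partial F_i(\mathcal{B}_{2R_i})$ meeting $\inte\mathbb{B}^m$; the same applies to $\bar\beta_i$. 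Since finite products and compositions of bounded Lipschitz functions on bounded sets are Lipschitz, $\varPhi_i=\beta_i\circ F_i$, $\bar\varPhi_i=\bar\beta_i\circ F_i$, and hence $u_i$ in either case of \eqref{test_fuction_u}, are Lipschitz on $M$. For the range in (i): the boundary datum $\alpha_i$ takes values in $[0,1]$, because $\langle\,\cdot\,,\theta_i\rangle\in[0,1]$ on the hemisphere $\mathbb{S}^{m-1}_+$ and vanishes on $\partial_0$, and the datum of $\bar\beta_i$ is likewise $[0,1]$-valued; McShane's extension, being the largest $1$-Lipschitz extension of $\alpha_i$, is then $\le\sup\alpha_i\le1$ and dominates the smallest one, $x\mapsto\inf_{y}\{\alpha_i(y)+|x-y|\}\ge0$, so $0\le\beta_i,\bar\beta_i\le1$. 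Composing with $F_i$ and $\psi$, and multiplying two $[0,1]$-valued functions when $\tilde r_i>0$, gives $0\le u_i\le1$.

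\textbf{The support (iii).} Since $\beta_i$ vanishes on $\mathbb{B}^m\setminus F_i(\mathcal{B}_{2R_i})$ and $F_i$ is injective, $\varPhi_i=\beta_i\circ F_i$ vanishes on $\mathbb{B}^m\setminus\mathcal{B}_{2R_i}$, and likewise $\bar\varPhi_i$ vanishes on $\mathcal{B}_{r_i/2}$. Hence $\varPhi_i\bar\varPhi_i$, and $\varPhi_i$ when $\tilde r_i=0$, is supported in $\mathcal{B}_{2R_i}\setminus\mathcal{B}_{r_i/2}=2A_i\cap\mathbb{B}^m=2\mathcal{A}_i$, so continuity of $\psi$ gives $\supp u_i\subset\psi^{-1}(2\mathcal{A}_i)$.

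\textbf{The lower bound (ii) --- the main obstacle.} Because $\psi(\partial M)\subset\mathbb{S}^{m-1}$, any $x\in\psi^{-1}(2\tilde A_i)$ is carried by $\psi$ into $\mathbb{S}^{m-1}$, where $\varPhi_i$ and $\bar\varPhi_i$ agree with $\varphi_i$ and $\bar\varphi_i$; thus (ii) only concerns the boundary trace of $u_i$ and reduces to the estimate $\varphi_i\bar\varphi_i\ge\tfrac{3}{5}\cdot\tfrac{3}{5}=\tfrac{9}{25}$ on the intermediate spherical annulus (and $\varphi_i\ge\tfrac{3}{5}\ge\tfrac{9}{25}$ when $\tilde r_i=0$). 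The input $\varphi_i\ge\tfrac{3}{5}$ and $\bar\varphi_i\ge\tfrac{3}{5}$ is exactly Kokarev's computation \cite{kok}: with $t_i$ chosen so that $f_{\theta_i}(t_i)$ maps $\tilde B_{2R_i}(\theta_i)$ onto $\mathbb{S}^{m-1}_+$, one writes the stereographic projection and the dilation explicitly and bounds $\langle\,\cdot\,,\theta_i\rangle$ from below on the image of the intermediate cap, and symmetrically for $\bar\varphi_i$ with the inner parameter. I expect the only genuinely delicate point to be precisely this spherical estimate, together with the conversion between the Euclidean radii $r_i<R_i$ of the annuli $A_i\subset\mathbb{R}^m$ produced by Proposition~\ref{non-atom} and their spherical counterparts, and the verification that the $\tfrac{3}{5}$-bounds hold throughout the region appearing in (ii); by contrast, neither the M\"obius extension $F_i$ nor the minimal Lipschitz extension enters this step, since $\varPhi_i=\varphi_i$ and $\bar\varPhi_i=\bar\varphi_i$ on $\mathbb{S}^{m-1}$.
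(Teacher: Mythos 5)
Your route matches the paper's: Lipschitz regularity via the McShane extension, and items (ii) and (iii) via the identification of $\varPhi_i|_{\mathbb{S}^{m-1}}$ and $\bar\varPhi_i|_{\mathbb{S}^{m-1}}$ with Kokarev's $\varphi_i$, $\bar\varphi_i$ and his spherical estimates. Your treatment of the Lipschitz bound, of (iii), and of the reduction of (ii) to the boundary trace supplies details the paper leaves implicit and is correct.

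There is, however, an error in the comparison you use for the lower bound in (i). McShane's sup formula $\beta_i(x)=\sup_y\{\alpha_i(y)-|x-y|\}$ is the \emph{smallest} $1$-Lipschitz extension of $\alpha_i$, not the largest; the largest is given by the inf formula $\inf_y\{\alpha_i(y)+|x-y|\}$. Hence the sup-formula $\beta_i$ does not dominate $\inf_y\{\alpha_i(y)+|x-y|\}$ — the inequality goes the other way — so your comparison does not yield $\beta_i\geq 0$. In fact the smallest $1$-Lipschitz extension of a nonnegative boundary datum can be negative in the interior: the McShane extension of the zero function from $\{-1,1\}\subset\mathbb{R}$ to $[-1,1]$ takes the value $-1$ at the origin. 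The upper bound $\beta_i\leq 1$ is fine since it follows directly from the sup formula, which is exactly how the paper argues. The paper's own treatment of the nonnegativity in (i) is also just ``we easily deduce,'' and since the later applications use only $u_i^2$ together with (ii) and (iii), the lower bound in (i) is not load-bearing; nevertheless the direction you assign to McShane is reversed, and the lower bound needs a different justification than the one you offer.
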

\begin{proof}
To prove $i)$, we first observe that 
\begin{eqnarray*}
    \alpha_{i}(y)=\langle y, \theta_{i}\rangle=|y||\theta_{i}|\cos{\theta}\in[-1,1],
\end{eqnarray*}
for every $y\in\mathbb{S}^{m-1}$ and $\theta_{i}\in\mathbb{S}_{+}^{m-1}$.
Hence, by definition of the MacShane's extension, we have
\begin{eqnarray*}
    \beta_{i}(z)=\sup_{}\{\alpha_{i}(y)-|z-y|\}\leq \alpha_{i}(y)\leq 1,
\end{eqnarray*}
for all $z \in \mathbb{B}^m$. Recalling that $\varPhi_i \doteq \beta_i \circ F_i $ and $\bar \varPhi_i \doteq \bar \beta_i \circ F_i $, we easily deduce the desired property for $u_i$.

Itens $ii)$ and $iii)$ follow directly by construction and the fact that the functions $\Phi$ and $\bar \Phi$ restricted to $\mathbb{S}^{m-1}$ coincide with the functions $\varphi_{r,\theta}$ and $\bar \varphi_{r,\theta}$ considered in \cite[Lemmata 2.3\&2.4]{kok}, respectively.
\end{proof}

Since the supports of the functions $u_i$ are contained in the subsets $\psi^{-1}(2\mathcal{A}_i)$, it follows that if the annuli $2\mathcal{A}_i$ are pairwise disjoint, then the sets $\psi^{-1}(2\mathcal{A}_i)$ are also pairwise disjoint. Consequently, the restriction of the functions $u_i$ to $\partial M$ are pairwise $L^2$-orthogonal.

\section{Spectral bounds and a Hersch type result}\label{sec_stek}

In this section, we investigate upper bounds for all eigenvalues of Steklov-type problems using conformal invariants. We establish Korevaar-type upper bounds for the spectra of the Dirichlet-to-Neumann and the conformal Dirichlet-to-Robin operators. For the latter operator, we also prove a Hersch-type result showing that the Euclidean metric uniquely maximizes the first normalized eigenvalue on the ball.

\subsection{Upper bounds for the Steklov spectrum}\label{upper_steklov}

 Assume that $(M,g)$ is an $n$-dimensional compact Riemannian manifold with boundary $\partial M$, admitting a conformal proper map $\psi \colon (M,g) \rightarrow P^m$. We introduce a conformal invariant that will play a key role in our eigenvalue estimates.
 
\begin{definition}
{\em The relative $m$-conformal volume of $\psi$}  is  defined  by
$$
      V_{rc}(M,m,\psi)=\sup_{f \in G} \vol(f (\psi (M))),
$$
where $G$ is the group of conformal diffeomorphisms of $P^m$. 
The {\em relative $m$-conformal volume of $M$} is then defined to be
$$
      V_{rc} (M,m)=\inf_{\psi} V_{rc} (M,m,\psi),
$$
where the infimum is taken over all non-degenerate conformal maps $\psi \colon M \rightarrow P^m$ such that  $\psi(\partial M) \subset  \partial P$.
\end{definition}

Assume that $(M,g)$ admits proper conformal immersions into the Euclidean unit ball $\mathbb{B}^m$. Recall that Fraser and Schoen \cite{fs} showed that the relative conformal volume provides the general upper bound \eqref{fraser_schoen} for the first nonzero Steklov eigenvalue.
We are interested in establishing upper bounds for the $k$-th Steklov eigenvalue. 
Since these eigenvalues are not invariant under scaling of the Riemannian metric, it is natural to consider the following normalized version
$$
\bar \sigma_k(M,g)\doteq\sigma_k(M,g)\vol_g(\partial M)^{\frac{1}{n-1}}.
$$
Our first main result provides a control of the normalized Steklov eigenvalue in terms of its relative $m$-conformal volume and the isoperimetric ratio. This result is inspired by the arguments in \cite{kok} and may be regarded as a higher-order generalization of \cite[Theorem 6.2]{fs}, as well as a variant of \cite[Theorem 1]{CEG}.

\begin{theorem}\label{thm:letterA}
 Let $(M^n,g)$ be a compact Riemannian manifold with non-empty boundary $\partial M$ and dimension $n\geqslant 2$, which admits a proper conformal immersion $\psi \colon M \to\mathbb{B}^m$. Then, for every $k\geqslant 1$, the $k$-th normalized Steklov eigenvalue of $M$ satisfies
\begin{equation}
\label{cvest}
\bar \sigma_k(M,g)\leq C \frac{V_{rc}(M,m)^{\frac{2}{n}}}{I(M)^{\frac{n-2}{n-1}}}k^{2/n},
\end{equation}
where $C=C(n,m)>0$ and $I(M)$ is the classical isoperimetric ratio, namely 
$$
I(M)=\frac{\vol_g(\partial M)}{\vol_g(M)^{\frac{n-1}{n}}
}.
$$
\end{theorem}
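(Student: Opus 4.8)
The strategy is the standard Korevaar--Grigor'yan--Netrusov--Yau scheme adapted to the Steklov setting, following Kokarev \cite{kok}, using the test functions $u_i$ built in Section~\ref{sec_test-functions}. First I would fix a non-degenerate proper conformal immersion $\psi\colon M\to\mathbb B^m$, and for a given $k$ I would apply Proposition~\ref{non-atom} to the sphere $\mathbb S^{m-1}$ (which satisfies a global $I$-covering property with $I=I(m)$) equipped with the non-atomic Radon measure $\mu$ obtained by pushing forward the boundary volume measure of $(\partial M,g)$ under $\psi$, i.e. $\mu(E)=\vol_g(\psi^{-1}(E\cap\mathbb S^{m-1})\cap\partial M)$; this produces $k+1$ annuli $A_i$ centered at points $\theta_i\in\mathbb S^{m-1}$ with $2\mathcal A_i$ pairwise disjoint and $\mu(\tilde A_i)\geq c\,\mu(\mathbb S^{m-1})/(k+1)=c\,\vol_g(\partial M)/(k+1)$. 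The corresponding test functions $u_1,\dots,u_{k+1}$ from \eqref{test_fuction_u} have pairwise disjoint supports, so their traces on $\partial M$ span a $(k+1)$-dimensional subspace of $H^1(M)$, and the min-max principle gives
\begin{equation*}
\sigma_{k+1}(M,g)\leq \max_{1\leq i\leq k+1}\mathcal R(u_i).
\end{equation*}
Since indexing is by $k$ vs.\ $k+1$ this only costs a harmless shift absorbed into the constant.

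Next I would estimate the numerator and denominator of $\mathcal R(u_i)$ (with $\mathfrak p\equiv\mathfrak q\equiv 0$, so $\mathcal R(u_i)=\int_M|\nabla u_i|^2\,dv_g/\int_{\partial M}u_i^2\,da_g$). For the denominator, item (ii) of Lemma~\ref{main_lemma_test} gives $u_i\geq 9/25$ on $\psi^{-1}(2\tilde A_i)$, hence $\int_{\partial M}u_i^2\,da_g\geq (9/25)^2\mu(2\tilde A_i)\geq (9/25)^2\mu(\tilde A_i)\geq c'\,\vol_g(\partial M)/(k+1)$. For the numerator, the key point is conformal invariance of the Dirichlet energy in dimension... well, here the energy is the interior $n$-dimensional Dirichlet integral, which is \emph{not} conformally invariant unless $n=2$, so instead I would use the chain rule: $u_i=\varPhi_i\circ\psi$ (or $(\varPhi_i\bar\varPhi_i)\circ\psi$), where $\varPhi_i=\beta_i\circ F_i$ with $\beta_i$ a $1$-Lipschitz MLE and $F_i$ a conformal diffeomorphism of $\mathbb B^m$. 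Writing $\phi_i:=\psi\circ$ and pulling back, $\int_M|\nabla u_i|^2\,dv_g=\int_M |\nabla(\beta_i\circ F_i\circ\psi)|^2\,dv_g$. Because $\beta_i$ is $1$-Lipschitz, $|\nabla(\beta_i\circ\Psi_i)|\leq |d\Psi_i|_{op}$ where $\Psi_i=F_i\circ\psi\colon M\to\mathbb B^m$; since $F_i\circ\psi$ is conformal, $|d\Psi_i|_{op}^2\,dv_g$ is comparable to the pulled-back volume form, and a Hölder argument in the exponent $n$ converts $\int_{\supp u_i}|d\Psi_i|^2\,dv_g$ into a product of $\vol_g(\supp u_i)^{(n-2)/n}$ and $(\int_{\supp u_i}|d\Psi_i|^n\,dv_g)^{2/n}=\vol((F_i\circ\psi)(\supp u_i))^{2/n}\leq V_{rc}(M,m,\psi)^{2/n}$, because $F_i\in G$. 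The support restriction $\supp u_i\subset\psi^{-1}(2\mathcal A_i)$ and disjointness of the $2\mathcal A_i$ are what keep the volume factors under control and let one sum the estimates; using $\vol_g(\supp u_i)\leq\vol_g(M)$ crudely (or summing the disjoint pieces) yields
\begin{equation*}
\int_M|\nabla u_i|^2\,dv_g\leq C(n,m)\,\vol_g(M)^{\frac{n-2}{n}}\,V_{rc}(M,m,\psi)^{\frac{2}{n}}.
\end{equation*}

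Combining the two bounds gives $\mathcal R(u_i)\leq C(n,m)\,\vol_g(M)^{(n-2)/n}V_{rc}(M,m,\psi)^{2/n}(k+1)/\vol_g(\partial M)$, hence the same bound for $\sigma_{k+1}(M,g)$, and then for $\sigma_k(M,g)$ after the index shift and for $\bar\sigma_k=\sigma_k\vol_g(\partial M)^{1/(n-1)}$ after multiplying through. Taking the infimum over $\psi$ replaces $V_{rc}(M,m,\psi)$ by $V_{rc}(M,m)$, and rewriting $\vol_g(M)^{(n-2)/n}\vol_g(\partial M)^{1/(n-1)-1}=\vol_g(M)^{(n-2)/n}\vol_g(\partial M)^{-(n-2)/(n-1)}$ and recalling $I(M)=\vol_g(\partial M)/\vol_g(M)^{(n-1)/n}$, a short exponent bookkeeping shows this combination equals $I(M)^{-(n-2)/(n-1)}$, which is exactly \eqref{cvest}. \textbf{The main obstacle} I anticipate is the numerator estimate: precisely tracking how the MLE/McShane extension $\beta_i$ interacts with the conformal factor of $F_i$ near the ``free'' part $\partial_0\mathcal B_{2R_i}$ of the support (where $\alpha_i$ vanishes), making sure the $1$-Lipschitz bound genuinely controls $|\nabla u_i|$ pointwise \emph{and} that the Hölder/conformal-volume argument does not lose the disjointness needed to avoid an extra factor of $k$; handling the two cases $\tilde r_i>0$ and $\tilde r_i=0$ in \eqref{test_fuction_u} uniformly (the product $\varPhi_i\bar\varPhi_i$ requires a Leibniz estimate together with the $0\leq\varPhi_i,\bar\varPhi_i\leq 1$ bounds of Lemma~\ref{main_lemma_test}(i)) is the technical heart of the argument.
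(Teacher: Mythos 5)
Your proposal follows the same Korevaar--GNY--Kokarev scheme as the paper, with one genuine gap that breaks the target exponent: as written, your argument yields $\bar\sigma_k \lesssim k$, not $\bar\sigma_k \lesssim k^{2/n}$.

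The issue is in the numerator estimate. You apply Proposition~\ref{non-atom} to extract only $k+1$ annuli and then bound $\vol_g(\supp u_i)\leq\vol_g(M)$ ``crudely,'' obtaining
$\int_M|\nabla u_i|^2\,dv_g\leq C\,\vol_g(M)^{\frac{n-2}{n}}V_{rc}(M,m,\psi)^{\frac{2}{n}}$ with no $k$-dependence. Combined with the denominator lower bound $\int_{\partial M}u_i^2\,da_g\gtrsim\vol_g(\partial M)/k$, the Rayleigh quotient is $\lesssim k$, and the index shift does not fix this. The correct power $k^{2/n}$ requires, in addition to the boundary-mass lower bound, an \emph{interior} volume upper bound $\vol_g(\psi^{-1}(2\mathcal A_i))\lesssim\vol_g(M)/k$ for the selected annuli. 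The paper obtains this by first requesting $2(k+1)$ annuli from Proposition~\ref{non-atom}; since the $2\mathcal A_i$ are pairwise disjoint and lie in $\mathbb B^m$, the preimages $\psi^{-1}(2\mathcal A_i)$ are disjoint in $M$, so by pigeonhole at least $k+1$ of them satisfy $\vol_g(\psi^{-1}(2\mathcal A_i))\leq\vol_g(M)/(k+1)$. Discarding the heavy ones and keeping these $k+1$, the H\"older estimate then reads
$\int_M|\nabla u_i|^2\,dv_g\lesssim V_{rc}(M,m,\psi)^{\frac{2}{n}}\bigl(\vol_g(M)/k\bigr)^{\frac{n-2}{n}}$,
and dividing by $\vol_g(\partial M)/k$ produces exactly $k^{\frac{n-2}{n}}\cdot k^{-1}\cdot k = k^{2/n}$ after the normalization by $\vol_g(\partial M)^{1/(n-1)}$. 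Your parenthetical ``(or summing the disjoint pieces)'' gestures at this pigeonhole, but it cannot be carried out with only $k+1$ annuli: you need a surplus to discard those whose interior preimage is too large, while retaining $k+1$ annuli that all have the required boundary-mass lower bound.

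A secondary, more minor point: the paper applies Proposition~\ref{non-atom} on $(\mathbb R^m,d_{\mathrm{can}},\mu)$ (with $\mu$ supported on $\mathbb S^{m-1}$) and uses Euclidean annuli, which is what the test-function construction of Section~\ref{sec_test-functions} (with $\mathcal A_i=A_i\cap\mathbb B^m$, $\tilde A_i=A_i\cap\mathbb S^{m-1}$) is set up to consume. Applying the proposition directly on $\mathbb S^{m-1}$ would give geodesic annuli on the sphere, and you would then need to translate them into the Euclidean annuli that feed the conformal maps $F_i$ and the cutoffs $\beta_i,\bar\beta_i$. This is repairable but is not what the construction in the paper is calibrated for. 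The rest of your outline --- the $1$-Lipschitz MLE bound on $|\nabla u_i|$, the H\"older interpolation in exponent $n$, the identification of $\int|\!\di F_i\circ\psi|^n$ with a conformal volume bounded by $V_{rc}$, and the final exponent bookkeeping relating $\vol_g(M)^{(n-2)/n}\vol_g(\partial M)^{-1}$ to $I(M)^{-(n-2)/(n-1)}$ --- matches the paper.
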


In particular, for Riemannian surfaces we have the following interesting consequence of Theorem \ref{thm:letterA}.

\begin{corollary} Let $M^2$ be a compact surface with non-empty boundary $\partial M$, which admits a proper conformal immersion $\psi \colon M \to \mathbb{B}^m$. Then
\begin{equation}\label{stekdim2}
         \sigma_k(M,g) L(\partial M) \leq C(m)  V_{rc}(M,m)k,
\end{equation}
    where $L(\partial M)$ is the length of $\partial M$.
\end{corollary}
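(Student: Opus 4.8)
The plan is to derive Theorem~\ref{thm:letterA} directly from the combinatorial and analytic machinery assembled in Sections~\ref{sec_test-functions} and~\ref{sec_stek}, and then specialize to $n=2$. First I would set up the measure on the sphere $\mathbb{S}^{m-1}$ against which Proposition~\ref{non-atom} is applied: push forward the boundary volume measure of $(\partial M, g)$ under $\psi$ restricted to $\partial M$ (composed, if necessary, with a conformal diffeomorphism $f\in G$ chosen to realize the relative conformal volume up to an $\varepsilon$), obtaining a non-atomic Radon measure $\mu$ on $\mathbb{S}^{m-1}$ with $\mu(\mathbb{S}^{m-1}) = \vol_g(\partial M)$. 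Since $\mathbb{S}^{m-1}$ has the global $I$-covering property for some $I=I(m)$, Proposition~\ref{non-atom} yields, for each $k$, a family of annuli $\{A_i\}_{i=1}^{k+1}$ with $\{2\mathcal{A}_i\}$ pairwise disjoint and $\mu(\tilde A_i)\geq c(m)\,\vol_g(\partial M)/(k+1)$. The associated test functions $u_i$ from \eqref{test_fuction_u} are then pairwise $L^2(\partial M)$-orthogonal by the remark following Lemma~\ref{main_lemma_test}, so by the min-max characterization of $\sigma_{k}$ (with $\mathfrak{p}=\mathfrak{q}\equiv 0$) it suffices to bound $\mathcal{R}(u_i)$ from above for each $i$.

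The core estimate is thus a bound on the Rayleigh quotient $\mathcal{R}(u_i) = \int_M |\nabla u_i|^2\,dv_g \big/ \int_{\partial M} u_i^2\,da_g$. For the denominator, items (i) and (ii) of Lemma~\ref{main_lemma_test} give $\int_{\partial M} u_i^2\,da_g \geq (9/25)^2\,\mu(2\tilde A_i) \geq (9/25)^2\,c(m)\,\vol_g(\partial M)/(k+1)$. For the numerator, I would exploit that $u_i$ is built from $1$-Lipschitz functions on $\mathbb{B}^m$ composed with the conformal map $F_i$ and then with $\psi$: the key conformal-invariance input is that if $h$ is $1$-Lipschitz on $\mathbb{B}^m$ supported in $\mathcal{B}_{2R_i}$, then $\int_M |\nabla(h\circ F_i\circ\psi)|^2\,dv_g$ is controlled — via the conformal factor and the dimension-$n$ scaling of the Dirichlet energy — by a constant times $\vol_{g_0}\big(F_i(\mathcal{B}_{2R_i}\cap\psi(M))\big)^{(n-2)/n}$ times the $L^n$-norm of the Lipschitz constant, which is $\leq \big(\vol_{g_0}(\mathbb{B}^m)\big)^{(n-2)/n}\cdot 1$, or more sharply bounded by $V_{rc}(M,m)^{2/n}$ after taking the infimum over $\psi$ and the supremum over $f\in G$. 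Here is exactly where Hölder's inequality with exponents $n/(n-2)$ and $n/2$ enters (for $n\geq 3$; for $n=2$ the Dirichlet energy is conformally invariant and the volume factor disappears), and where one must track that the total image volume $\sum_i \vol_{g_0}(2\mathcal{A}_i\cap\psi(M)) \leq V_{rc}(M,m)+\varepsilon$ because the $2\mathcal{A}_i$ are disjoint. Dividing, one gets $\mathcal{R}(u_i) \leq C(n,m)\,V_{rc}(M,m)^{2/n}\,(k+1)/\vol_g(\partial M)$; multiplying by $\vol_g(\partial M)^{1/(n-1)}$ and rewriting $\vol_g(\partial M)^{1/(n-1)}/\vol_g(\partial M) = \vol_g(\partial M)^{(2-n)/(n-1)}$, then inserting $\vol_g(\partial M) = I(M)\,\vol_g(M)^{(n-1)/n}$, converts the volume factor into $I(M)^{-(n-2)/(n-1)}\vol_g(M)^{-(n-2)/n}$; but the dangling $\vol_g(M)^{-(n-2)/n}$ is exactly absorbed because the numerator estimate, properly bookkept, produces a matching $\vol_g(M)^{(n-2)/n}$ from the interior Dirichlet energy under Hölder, leaving precisely \eqref{cvest}.

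The main obstacle I anticipate is the bookkeeping of conformal factors in the numerator estimate: one must verify that the Dirichlet energy of $h\circ F_i$ on $\mathbb{B}^m$ (with respect to $g_0$) is genuinely bounded by $C(n,m)\,\vol_{g_0}(F_i(\mathcal{B}_{2R_i}))^{(n-2)/n}$ using that $F_i$ is conformal and $h$ is $1$-Lipschitz — this uses $|\nabla(h\circ F_i)|_{g_0} = |\nabla h|\circ F_i \cdot |\,\mathrm{conformal\ factor\ of\ }F_i| \leq |\mathrm{Jac}\,F_i|^{1/m}$, so $\int |\nabla(h\circ F_i)|^n \leq \int |\mathrm{Jac}\,F_i|^{n/m}$, which for $n=m$ is just $\vol_{g_0}(F_i(\mathcal{B}_{2R_i}))$, and in general requires Hölder to pass to the volume — and then that composing with $\psi$ and summing over disjoint annuli collapses the sum of image volumes into $V_{rc}(M,m)$ after optimizing over $\psi\in$ (conformal proper maps) and $f\in G$. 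I expect these are handled in detail in the paper's proof via Lemma~\ref{main_lemma_test} and the explicit form \eqref{def_F}. Once Theorem~\ref{thm:letterA} is in hand, the corollary is immediate: set $n=2$, note $I(M)^{(n-2)/(n-1)} = I(M)^0 = 1$ and $k^{2/n} = k$, so \eqref{cvest} becomes $\sigma_k(M,g)\,\vol_g(\partial M)^{1} \leq C(m)\,V_{rc}(M,m)\,k$, and $\vol_g(\partial M) = L(\partial M)$ when $\dim \partial M = 1$, which is exactly \eqref{stekdim2}.
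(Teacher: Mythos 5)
Your specialization $n=2$ of \eqref{cvest} to obtain \eqref{stekdim2} is correct and matches the paper's intent: the corollary is nothing more than Theorem~\ref{thm:letterA} with $I(M)^{(n-2)/(n-1)}=1$, $k^{2/n}=k$, and $\vol_g(\partial M)=L(\partial M)$. Your reconstruction of Theorem~\ref{thm:letterA} also follows the paper's route in its essentials (GNY decomposition, Lipschitz test functions, conformal control of the $L^n$ Dirichlet energy, Hölder), but two details deserve comment.

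First, you propose applying Proposition~\ref{non-atom} to a measure on $\mathbb{S}^{m-1}$, whereas the paper defines $\mu$ as a Borel measure on $\mathbb{R}^m$ via $\mu(O)=\int_{\psi^{-1}(O\cap\mathbb{S}^{m-1})}da_g$ and applies GNY to $(\mathbb{R}^m,d_{\mathrm{can}},\mu)$. The resulting Euclidean annuli $A_i$ are then intersected with $\mathbb{B}^m$ and $\mathbb{S}^{m-1}$ to produce the sets on which the test functions of Section~\ref{sec_test-functions} live. Using $\mathbb{S}^{m-1}$ directly is closer to Kokarev's original closed-manifold setting and would require reworking the annulus-to-test-function dictionary; it is a genuinely different (and workable) route, but not what the paper does.

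Second, and more substantively for the general-$n$ case, the paper produces $2(k+1)$ annuli and then reorders so that the first $k+1$ also satisfy the interior volume pigeonhole $\vol_g(\psi^{-1}(2\mathcal{A}_i))\le \vol_g(M)/k$ \emph{in addition to} the boundary lower bound $\mu(A_i)\gtrsim \vol_g(\partial M)/k$. Both estimates are needed: the Hölder step bounds $\int_M|\nabla u_i|^2$ by $\|\nabla\Phi_i\|_{L^n}^2\cdot|\psi^{-1}(2\mathcal{A}_i)|^{(n-2)/n}$, and the $k^{2/n}$ exponent in \eqref{cvest} arises precisely from combining $|\psi^{-1}(2\mathcal{A}_i)|^{(n-2)/n}\lesssim (\vol_g(M)/k)^{(n-2)/n}$ with the $1/k$ from the boundary measure. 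Your proposal takes only $k+1$ annuli and invokes a sum bound $\sum_i\vol_{g_0}(F_i\circ\psi(\cdots))\le V_{rc}+\varepsilon$ over the \emph{images}, which controls the $L^n$-energy factor but is not the intrinsic-volume pigeonhole the Hölder step actually requires. That said, you correctly note that for $n=2$ the Hölder exponents trivialize and the interior volume factor is raised to the power $0$, so none of this affects the corollary. For the record, the $L^n$-energy bound is also per-annulus and uniform: each $\int_M|\nabla\Phi_i|^n\,dv_g\le n^{n/2}\vol_g(M,(F_i\circ\psi)^*g_{\mathrm{can}})\le n^{n/2}V_{rc}(M,m,\psi)$, with no summation needed.
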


\begin{remark}
According to the Weyl asymptotics for $\sigma_k(M,g)$, one would expect the right-hand side of \eqref{cvest} to scale like $k^{1/n}$ rather than $k^{2/n}$. This discrepancy may be explained by the fact that such an improved estimate would, in turn, yield an upper bound on the isoperimetric ratio $I(M)$, whenever $V_{rc}(M,m)<+\infty$. 
\end{remark}

\begin{remark}
Recently, Lima and Menezes \cite{LM} introduced Steklov eigenvalues with frequency for free boundary minimal surfaces in geodesic balls of $\mathbb{S}^m_{+}$, establishing upper bounds (see also the work of Medvedev \cite{Me} for the corresponding problem in $\mathbb{H}^m$, and \cite{MM} for related results in higher-dimensional submanifolds). A natural extension would be to obtain analogous bounds for conformal metrics on manifolds admitting proper immersions into such geodesic balls, using suitable conformal volume quantities.
\end{remark}

\begin{proof}[\bf Proof of Theorem \ref{thm:letterA}]
We consider the usual distance function $d_\text{can}$ on the Euclidean space, and recall that $(\mathbb{R}^m,d_\text{can})$ satisfies the global $I$-covering property. Following \cite{CEG}, we define a Borel measure $\mu$ supported on $\partial M$ as follows: for any open subset $O \subset \mathbb{R}^m$, we set
\begin{equation}\label{measure_thm4.1}
\mu(O) \doteq \int_{\psi^{-1}(O \cap \mathbb{S}^{m-1})} 1 \, da_{g}, 
\end{equation}
where $da_{g}$ denotes the volume measure on $\partial M$. In particular, we have $\mu(\mathbb{R}^m) = \vol_g(\partial M)$. 

Since the measure $\mu$ is non-atomic, we can apply Proposition \ref{non-atom} to the metric measure space $(\mathbb{R}^m,d_\text{can},\mu)$ to obtain a collection $\{A_i\}$ of $2(k+1)$ annuli in $\mathbb{R}^m$ such that 
\begin{equation}
\label{denom1}
\mu(A_i) \geq c \frac{\mu(\mathbb{R}^m)}{2(k+1)} \geq c \frac{\vol_g(\partial M)}{4k}, \qquad \text{for all } i=1,\ldots,2k+2,
\end{equation}
with $c>0$ depending only on $I$, and the collection $\{2A_i\}$ mutually disjoint. Reordering them if necessary, we can assume that the first $k+1$ annuli satisfy 
\begin{equation}
\label{denom2}
\vol_g(\psi^{-1}(2A_i\cap \mathbb{B}^m)) \leq \frac{\vol_g(M)}{k+1} \leq \frac{\vol_g(M)}{k}.
\end{equation}

Following \cite{kok}, it suffices to consider $k+1$ test functions for the Rayleigh quotient, each supported in disjoint domains of the unit ball, and then apply the characterization of $\sigma_k(M,g)$, namely %(see Section \ref{sec_tec})
\begin{equation} \label{eq:var}
     \sigma_k(M,g) \int_{\partial M} u_i^2 \; da_g \leq \int_M |\nabla u_i |^2 \; dv_g ,
\end{equation}
for $i = 1,\ldots,k+1$. 

We take the test function $u_i$ defined in \eqref{test_fuction_u}, whose support is contained in the subset $\psi^{-1}(2\mathcal{A}_i)$, where $\mathcal{A}_i = A_i \cap \mathbb{B}^m$. To estimate both sides of inequality \eqref{eq:var}, we will consider only the case $\tilde r_i > 0$, as the other case can be treated analogously. Applying H\"older's inequality, we obtain
\begin{eqnarray*}
\int_{M}\vert \nabla u_i\vert^2dv_g &\leqslant & 2\left(\int_{\supp u_i}\bar{\varPhi}_i^2 \vert\nabla \varPhi_i\vert^2 dv_g + \int_{\supp u_i} \varPhi_i^2 \vert\nabla \bar\varPhi_i\vert^2 dv_g\right)\\[0.2cm]
 &\leqslant & 2\left(\vert\vert\nabla \varPhi_i \vert\vert^2_{L^n(M)} + \vert\vert\nabla \bar\varPhi_i \vert\vert^2_{L^n(M)} \right)|\psi^{-1}(2\mathcal{A}_i)|^{\frac{n-2}{n}}.
\end{eqnarray*}

For the  first integral, since $\beta_i$ is $1$-Lipschitz, we have
\begin{eqnarray*}
\int_{M}\vert\nabla \varPhi_i \vert^n dv_g &\leqslant & \int_{M} \vert \nabla F_i\vert^n dv_g\\[0.2cm]
&=& n^{n/2}\vol_g(M,(F_i\circ\psi)^*g_\text{can})\\[0.1cm]
&\leqslant& n^{n/2}V_{rc}(M,m,\psi),
\end{eqnarray*}
where $g_\text{can}$ is the canonical round metric on $\mathbb{B}^m$. Similarly, we obtain
$$
\int_{M}\vert \nabla \bar\varPhi_i\vert^n dv_g\leqslant n^{n/2}V_{rc}(M,m,\psi).
$$
Combining the above inequalities and using \eqref{denom2}, we get the following estimate 
\begin{eqnarray}
\label{num}
\int_{M}\vert\nabla u_i\vert^2 dv_g &\leqslant& 4n V_{rc}(M,m,\psi)^{\frac{2}{n}} \left(\frac{\vol_g(M)}{k}\right)^{\frac{n-2}{n}}.
\end{eqnarray}
Now, using \eqref{denom1} we can also estimate the integral of $u_i$ on the boundary $\partial M$ as
\begin{equation}
\label{eq_den}
\int_{\partial M} u_i^2 da_g \geqslant \left(\frac{3}{5}\right)^4\mu(\mathcal{A}_i)\geqslant C  \frac{\vol_g(\partial M)}{k}.
\end{equation}
Substituting \eqref{num} and \eqref{eq_den} in \eqref{eq:var} we obtain 
\begin{equation}
\sigma_k(M,g)  \vol_g(\partial M)  \leqslant C \vol_g(M)^{\frac{n-2}{n}} V_{rc}(M,m)^{2/n} k^{2/n},%\quad\mbox{for $i=1,\ldots,k+1$,}
\end{equation}
where $C>0$ is a constant depending only on the dimensions $n$ and $m$. By reorganizing the terms to include the isoperimetric ratio, we then complete the proof. 
\end{proof}

\subsection{Spectrum of the conformal Dirichlet-to-Robin map}\label{escobar_sec}

In a remarkable work, Korevaar \cite{K} proved that for any positive integer k, the $k$-th eigenvalue $\lambda_k(\mathbb{S}^n,-\Delta_{\bar g})$ of the Laplacian satisfies
$$
\lambda_k(\mathbb{S}^n,-\Delta_{\bar g})\cdot \mbox{vol}_{\bar g}(\mathbb{S}^n)\leq C\cdot k^{\frac{2}{n}}, \qquad \text{for } \ \bar g \in [g],
$$
where $C(n)>0$ and $\bar g$ is conformal to the round metric of $\mathbb{S}^n,$ that is, $\bar g=u^{\frac{4}{n-2}}g$ for some positive smooth function $u.$ Note that $$\mbox{vol}_{\bar g}(\mathbb{S}^n)=\int_{\mathbb{S}^n}u^{\frac{2n}{n-2}}dv_g.$$

Inspired by Korevaar's work, Sire and Xu \cite{SX} introduced a new functional for the conformal spectrum of the conformal Laplacian on a closed manifold of dimension $n\geq 3$. In fact, they prove that 
 for any metric $\bar{g} \in [g]$, the $k$-th eigenvalue of the conformal Laplacian $L_{\tilde{g}},$ that will be defined appropriately later, satisfies the inequality
\[
\lambda_k(S^n, L_{\tilde{g}}) \int_{S^n} u^{\frac{4}{n-2}}  dv_g \leq C(n) k^{2/n},
\]
where $C(n)$ is a positive constant only depending on $n$.

For the Dirichlet-to-Neumann map, several results are available on upper bounds for higher eigenvalues (see, e.g., \cite{CEG,H}). Motivated by this line of research, we turn to the problem of establishing upper bounds for the spectrum of the conformal Dirichlet-to-Robin map on compact \(n\)-dimensional Riemannian manifolds \((M,g)\) with nonempty boundary \(\partial M\), \(n \geq 3\).

Given a metric $\bar g \in [g]$, with $\bar g = u^{\frac{4}{n-2}} g$, we recall that the the associated volume elements on $M$ and $\partial M$ transform as  
$
dv_{\bar g} = u^{\frac{2n}{n-2}} \, dv_g $ and $ 
da_{\bar g} = u^{\frac{2(n-1)}{n-2}} \, da_g,
$
respectively. We also recall the conformal Laplacian, the operator acting on $C^\infty(M)$ defined by
\[
L_g u \doteq -\Delta_g u + c_n R_g u,\quad\quad c_n = \frac{n-2}{4(n-1)},
\]
where $R_g$ denotes the scalar curvature of $(M,g)$ (some authors define $L_g$ to be $c_n^{-1}$ times our operator).  
The corresponding conformal boundary operator acting on $C^\infty(\partial M)$ is given by
\[
B_g u \doteq \frac{\partial u}{\partial \eta_g} + b_n H_g u,\quad\quad b_n = \frac{n-2}{2},
\]
where $H_g$ is the mean curvature of $\partial M$, and $\eta_g$ is the outward unit normal.

We are interested in obtaining upper bounds for the higher eigenvalues of the conformal Dirichlet-to-Robin problem
\begin{equation}\label{eing2}
\begin{cases}
L_g u = 0 & \text{ in }\  M, \\
B_gu= \sigma u & \text{ on } \ \partial M.
\end{cases} 
\end{equation}
Following the work of Sire and Xu \cite{SX}, we normalize the eigenvalues of the problem \eqref{eing2} as follows:
\begin{equation}\label{normalized_eig_conf}
\bar{\sigma}_k(M,B_{\bar g})=\sigma_k(M,B_{\bar g})\cdot\int_{\partial M}u^{\frac{2}{n-2}}da_g , \qquad \text{for } \ \bar g \in [g].
\end{equation}
It is not difficult to check that  this normalization is not invariant under a conformal change of the metric.

We begin by establishing a Korevaar-type bound for the spectrum of the conformal Dirichlet-to-Robin operator on the round unit ball  $(\mathbb{B}^n,g)$, inspired by \cite[Theorem 1.1]{SX}.

\begin{theorem}\label{thm:letterB}
Let $(\mathbb{B}^n, g)$ be the round unit ball. For every metric $\bar{g} \in [g]$, the normalized $k$-th eigenvalue of the conformal Dirichlet-to-Robin problem \eqref{eing2} on the ball $\mathbb{B}^n$ satisfies
\begin{eqnarray*}
\bar\sigma_k(\mathbb{B}^{n},B_{\bar g})\leqslant C(n)k^{\frac{2}{n}},
\end{eqnarray*}
for some constant $C(n)>0$.
\end{theorem}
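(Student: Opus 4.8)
The plan is to reduce Theorem~\ref{thm:letterB} to the test-function machinery of Section~\ref{sec_test-functions} together with the variational characterization of $\sigma_k(\mathbb B^n,B_{\bar g})$. The key observation is that the conformal Dirichlet-to-Robin operator enjoys a conformal covariance property: if $\bar g = u^{4/(n-2)}g$, then the Rayleigh quotient for $B_{\bar g}$ can be rewritten, via the standard conformal transformation laws for $L_g$, $B_g$, $dv_{\bar g}$, and $da_{\bar g}$, in terms of the fixed round metric $g$. Concretely, for $v\in H^1(\mathbb B^n)$ one has the identity
\begin{equation*}
\int_{\mathbb B^n}\bigl(|\nabla_{\bar g} v|^2 + c_n R_{\bar g}v^2\bigr)\,dv_{\bar g}
-\int_{\partial\mathbb B^n} b_n H_{\bar g}v^2\,da_{\bar g}
=\int_{\mathbb B^n}\bigl(|\nabla_g (uv)|^2 + c_n R_g (uv)^2\bigr)\,dv_g
-\int_{\partial\mathbb B^n} b_n H_g (uv)^2\,da_g,
\end{equation*}
while the boundary $L^2$-norm transforms as $\int_{\partial\mathbb B^n}v^2\,da_{\bar g}=\int_{\partial\mathbb B^n}(uv)^2 u^{2/(n-2)}\,da_g$. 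For the round ball, $R_g$ and $H_g$ are positive constants, so the numerator on the right is (up to the harmless curvature terms) essentially the Dirichlet energy of $w:=uv$ on the ball.

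First I would set up the test functions exactly as in the proof of Theorem~\ref{thm:letterA}, but now choosing the reference measure on $\mathbb R^n$ adapted to $\bar g$: define $\mu(O)=\int_{\psi^{-1}(O\cap\mathbb S^{n-1})} u^{2/(n-2)}\,da_g$, where $\psi=\mathrm{id}\colon \mathbb B^n\to\mathbb B^n$, so that $\mu(\mathbb R^n)=\int_{\partial\mathbb B^n}u^{2/(n-2)}\,da_g$, which is precisely the normalizing factor appearing in $\bar\sigma_k$. Since $(\mathbb R^n,d_{\mathrm{can}})$ has the global $I$-covering property and $\mu$ is non-atomic, Proposition~\ref{non-atom} yields $2(k+1)$ annuli $A_i$ with $\mu(A_i)\ge c\,\mu(\mathbb R^n)/(2(k+1))$ and $\{2A_i\}$ pairwise disjoint; after reordering, $k+1$ of them also satisfy $\int_{\psi^{-1}(2A_i\cap\mathbb B^n)}u^{2n/(n-2)}\,dv_g=\mathrm{vol}_{\bar g}(\psi^{-1}(2\mathcal A_i))\le \mathrm{vol}_{\bar g}(\mathbb B^n)/(k+1)$. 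I would then take $v_i$ to be the functions of Lemma~\ref{main_lemma_test} divided by $u$, i.e.\ arrange $w_i:=u v_i = u_i$ to be the Lipschitz functions constructed there, which are $1$-Lipschitz, bounded in $[0,1]$, supported in $\psi^{-1}(2\mathcal A_i)$, and $\ge 9/25$ on $\psi^{-1}(2\tilde A_i)$. Because the $u_i=w_i$ have disjoint supports, the $v_i$ are pairwise $L^2(\partial\mathbb B^n, da_{\bar g})$-orthogonal, so the min-max principle gives $\sigma_k(\mathbb B^n,B_{\bar g})\int_{\partial\mathbb B^n}v_i^2\,da_{\bar g}\le \int_{\mathbb B^n}(|\nabla_g w_i|^2+c_n R_g w_i^2)\,dv_g-\int_{\partial\mathbb B^n}b_n H_g w_i^2\,da_g$ for some $i$.

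The numerator is then bounded exactly as in Theorem~\ref{thm:letterA}: $\int|\nabla_g w_i|^2\,dv_g\le \mathrm{vol}_g(2\mathcal A_i)^{(n-2)/n}\|\nabla w_i\|_{L^n}^2$, and since $w_i$ is $1$-Lipschitz with support of bounded Euclidean volume (the ball has finite volume and the relevant conformal-volume bound is $\mathrm{vol}_g(\mathbb B^n)$ itself, finite), this gives a bound of the shape $C(n)\,\mathrm{vol}_g(\mathbb B^n)^{(n-2)/n}(1/k)^{(n-2)/n}$; the curvature term $c_n R_g\int w_i^2\,dv_g$ contributes at most $C(n)\,\mathrm{vol}_g(2\mathcal A_i)\le C(n)/k$, which is lower order (or comparable) and absorbed, while $-\int b_n H_g w_i^2\,da_g\le 0$ since $H_g>0$ on the round ball, so it only helps. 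For the denominator, $\int_{\partial\mathbb B^n}v_i^2\,da_{\bar g}=\int_{\partial\mathbb B^n}w_i^2 u^{2/(n-2)}\,da_g\ge (9/25)^2\mu(2\tilde A_i)\ge (9/25)^2\mu(A_i)\ge c\,\mu(\mathbb R^n)/(4k)$, i.e.\ at least $C(n)\,\bigl(\int_{\partial\mathbb B^n}u^{2/(n-2)}da_g\bigr)/k$. Dividing, the factors of $k$ combine to $k^{-(n-2)/n}\cdot k = k^{2/n}$, and we obtain $\bar\sigma_k(\mathbb B^n,B_{\bar g})=\sigma_k(\mathbb B^n,B_{\bar g})\int_{\partial\mathbb B^n}u^{2/(n-2)}da_g\le C(n)k^{2/n}$, with $C(n)$ depending only on $n$ through $\mathrm{vol}_g(\mathbb B^n)$, $R_g$, $H_g$, the covering constant $I$, and dimensional factors.

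I expect the main obstacle to be the bookkeeping of the conformal change of variables: one must verify carefully that substituting $v=w/u$ genuinely converts the $\bar g$-Rayleigh quotient into the $g$-Dirichlet-type energy of $w$ (this is the classical identity behind conformal covariance of $L_g$ and $B_g$, but the boundary term and the interior curvature term must be tracked simultaneously), and that the constraint $\int_{\partial M}v_i\phi_j\,da_{\bar g}=0$ translates correctly, which it does precisely because the $w_i$ have disjoint supports. A secondary technical point is justifying that the Lipschitz functions $w_i$ lie in the form domain and that the sign $H_g>0$ is used legitimately rather than needing its exact value; both are handled by the explicit geometry of the round ball. No genuinely new analytic input beyond Section~\ref{sec_test-functions} and the conformal transformation laws recalled before the theorem is required.
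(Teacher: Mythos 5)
Your overall strategy matches the paper's: the measure $\mu(O)=\int_{O\cap\mathbb{S}^{n-1}}u^{2/(n-2)}\,da_g$, the Grigor'yan--Netrusov--Yau annuli, the test functions $v_i=u_i/u$ with $u_i$ from Section~\ref{sec_test-functions}, and the conformal transformation identity (Lemma~\ref{lem}). However, there are two concrete errors that break the argument.

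First, the sign on the boundary curvature term is wrong. Since $B_g u=\partial_\eta u + b_n H_g u$, in the notation of \eqref{eq_auxi} one has $\mathfrak{q}=-b_nH_g$, so the Rayleigh quotient \eqref{Rayleigh_quo_1} is
\begin{equation*}
\mathcal{R}_{\bar g}(v)=\frac{\displaystyle\int_{\mathbb{B}^n}\bigl(|\nabla_{\bar g}v|^2+c_nR_{\bar g}v^2\bigr)\,dv_{\bar g}+b_n\int_{\mathbb{S}^{n-1}}H_{\bar g}v^2\,da_{\bar g}}{\displaystyle\int_{\mathbb{S}^{n-1}}v^2\,da_{\bar g}},
\end{equation*}
with a \emph{plus} sign in front of $b_nH_{\bar g}$, and Lemma~\ref{lem} preserves that plus sign. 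You wrote a minus sign and then discarded the boundary curvature term by arguing it is $\le 0$ because $H_g>0$. In fact the term $+b_n\int_{\mathbb{S}^{n-1}}H_g u_i^2\,da_g\ge 0$ works against you and must be bounded from above; the paper does this by $b_n(n-1)\,\mathrm{vol}_g(2\tilde A_i)\le b_n(n-1)\,\mathrm{vol}_g(\mathbb{S}^{n-1})/k$, which is exactly of the right order $O(1/k)$.

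Second, your reordering of the annuli is with respect to $\bar g$-volume, i.e.\ $\mathrm{vol}_{\bar g}(\psi^{-1}(2\mathcal{A}_i))\le\mathrm{vol}_{\bar g}(\mathbb{B}^n)/(k+1)$. But the H\"older step in the numerator estimate, $\int|\nabla_g u_i|^2\,dv_g\le\|\nabla u_i\|_{L^n(2\mathcal{A}_i)}^2\,\mathrm{vol}_g(2\mathcal{A}_i)^{(n-2)/n}$, requires the \emph{Euclidean} volume of the annulus to be small, not the $\bar g$-volume, and similarly the boundary curvature term requires control of $\mathrm{vol}_g(2\tilde A_i)$. The paper reorders to get both $\mathrm{vol}_g(2\mathcal{A}_i)\le\mathrm{vol}_g(\mathbb{B}^n)/k$ and $\mathrm{vol}_g(2\tilde A_i)\le\mathrm{vol}_g(\mathbb{S}^{n-1})/k$ simultaneously; your reordering condition neither matches what you use later in the bound nor supplies the boundary estimate that the corrected sign makes necessary. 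Both issues are repairable by following the paper's reordering and by bounding, rather than dropping, the $b_nH_g$ term, but as written the argument has a genuine gap.
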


\begin{remark}\label{remark_adaptation}
In light of Section \ref{sec_tec}, we note that the first Dirichlet eigenvalue of the conformal Laplacian $L_{\overline{g}}$ is positive, as is already the case for the round metric $g$. This follows from the conformal invariance of the sign of the eigenvalues, a fact that can be established by a straightforward adaptation of \cite[Proposition 3.1]{Say} to the Dirichlet setting.
\end{remark}

In what follows, we will make use of the following lemma, which generalizes \cite[Lemma 4.1]{SX}. Its proof is a straightforward adaptation and relies on the transformation laws for the scalar and mean curvatures under a conformal change:

\begin{equation}\label{confchange}
\begin{cases} 
R_{\bar g} = \frac{4(n-1)}{n-2}u^{-\frac{n+2}{n-2}} L_{g}u & \text { in } \ M, \\[0.1cm] 
 H_{\bar g}=\frac{2}{n-2}u^{-\frac{n}{n-2}}B_{g}u & \text { on } \ \partial M.\end{cases}
\end{equation}

\begin{lemma}\label{lem}
Let $(M,g)$ be a Riemannian manifold with boundary $\partial M$, and let $\bar{g} \in [g]$ be a conformal metric given by $\bar{g} = u^{4/(n-2)}g$ for some positive function $u \in C^\infty(M)$. Then, for any Lipschitz function $f \in \mathrm{Lip}(M) \cap C^\infty(M)$, the following identity holds:
\begin{multline*}
\int_{M}|\nabla_{\bar{g}}f|^{2}dv_{\bar g} + c_{n}\int_{M}R_{\bar{g}}f^{2}dv_{\bar g} + b_{n}\int_{\partial M}H_{\bar{g}}f^{2}da_{\bar g} =  \\[0.2cm]
\int_{M} |\nabla_{g}(u f)|^{2}dv_g + c_{n}\int_{M}R_{g}(u f)^2 dv_g + b_{n}\int_{\partial M}H_{g}(u f)^2da_g.
\end{multline*}
\end{lemma}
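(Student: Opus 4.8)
The strategy is to verify the identity pointwise on the density level by expanding both sides using the conformal transformation laws \eqref{confchange}. First I would rewrite the right-hand side. Set $w = uf$. Expanding the gradient term gives
\[
|\nabla_g(uf)|^2 = u^2|\nabla_g f|^2 + 2uf\,\langle\nabla_g u,\nabla_g f\rangle_g + f^2|\nabla_g u|^2,
\]
and a routine integration by parts rearranges the cross term: $2\int_M uf\,\langle\nabla_g u,\nabla_g f\rangle_g\,dv_g = \int_M \langle\nabla_g u^2,\nabla_g f^2/2\rangle_g\cdot(\dots)$, or more cleanly, integrate the middle term against $f^2$ by parts to transfer a Laplacian onto $u$. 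The point is that the boundary contribution of this integration by parts produces exactly a term of the form $\int_{\partial M} f^2 u\,\partial_\eta u\,da_g$, which, together with the $b_n H_g u^2 f^2$ term, reassembles into $b_n\int_{\partial M} H_g w^2\,da_g$ only after one invokes the second line of \eqref{confchange}.

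Second, I would substitute the transformation laws. Using $R_{\bar g} = \frac{4(n-1)}{n-2}u^{-\frac{n+2}{n-2}}L_g u = c_n^{-1}u^{-\frac{n+2}{n-2}}L_g u$ together with $dv_{\bar g} = u^{\frac{2n}{n-2}}dv_g$, one finds
\[
c_n\int_M R_{\bar g}f^2\,dv_{\bar g} = \int_M u^{-\frac{n+2}{n-2}}(L_g u)\,f^2\,u^{\frac{2n}{n-2}}\,dv_g = \int_M f^2 u\,(L_g u)\,dv_g,
\]
since $-\frac{n+2}{n-2}+\frac{2n}{n-2} = \frac{n-2}{n-2} = 1$. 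Expanding $L_g u = -\Delta_g u + c_n R_g u$ yields $\int_M f^2 u(-\Delta_g u)\,dv_g + c_n\int_M R_g u^2 f^2\,dv_g$; the second summand is precisely part of the target right-hand side, while the first, after integrating by parts, contributes $\int_M \langle\nabla_g u,\nabla_g(f^2 u)\rangle_g\,dv_g - \int_{\partial M} f^2 u\,\partial_\eta u\,da_g$. Similarly $|\nabla_{\bar g}f|^2 = u^{-\frac{4}{n-2}}|\nabla_g f|^2$, so $\int_M|\nabla_{\bar g}f|^2\,dv_{\bar g} = \int_M u^2|\nabla_g f|^2\,dv_g$. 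For the boundary term, $H_{\bar g} = \frac{2}{n-2}u^{-\frac{n}{n-2}}B_g u = b_n^{-1}u^{-\frac{n}{n-2}}B_g u$ and $da_{\bar g} = u^{\frac{2(n-1)}{n-2}}da_g$, with $-\frac{n}{n-2}+\frac{2(n-1)}{n-2} = \frac{n-2}{n-2}=1$, giving
\[
b_n\int_{\partial M}H_{\bar g}f^2\,da_{\bar g} = \int_{\partial M} f^2 u\,(B_g u)\,da_g = \int_{\partial M} f^2 u\,\partial_\eta u\,da_g + b_n\int_{\partial M} H_g u^2 f^2\,da_g.
\]

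Third, I would collect all the pieces. The left-hand side becomes
\[
\int_M u^2|\nabla_g f|^2\,dv_g + \int_M\langle\nabla_g u,\nabla_g(f^2 u)\rangle_g\,dv_g + c_n\int_M R_g u^2 f^2\,dv_g + b_n\int_{\partial M}H_g u^2 f^2\,da_g,
\]
where the two boundary integrals $\mp\int_{\partial M}f^2 u\,\partial_\eta u\,da_g$ arising from the interior integration by parts and from the $H_{\bar g}$ term cancel exactly. It then remains to check the purely interior algebraic identity
\[
u^2|\nabla_g f|^2 + \langle\nabla_g u,\nabla_g(f^2 u)\rangle_g + c_n R_g u^2 f^2 = |\nabla_g(uf)|^2 + c_n R_g(uf)^2,
\]
which, after expanding $\nabla_g(f^2 u) = f^2\nabla_g u + 2uf\nabla_g f$ and $|\nabla_g(uf)|^2 = u^2|\nabla_g f|^2 + 2uf\langle\nabla_g u,\nabla_g f\rangle_g + f^2|\nabla_g u|^2$, reduces to $f^2|\nabla_g u|^2 = f^2|\nabla_g u|^2$ — an identity. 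This closes the argument. I expect the only delicate point to be bookkeeping the signs and the precise exponents so that every boundary term cancels and every conformal weight collapses to $1$; there is no analytic obstacle since $f\in\mathrm{Lip}(M)\cap C^\infty(M)$ and $u\in C^\infty(M)$ is positive, so all integrations by parts on the compact manifold $M$ are justified (one may first prove it for $f\in C^\infty$ and extend to Lipschitz $f$ by density, exactly as in \cite[Lemma 4.1]{SX}).
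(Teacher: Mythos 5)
Your proof is correct and takes essentially the same route as the paper, which simply states the identity is a straightforward adaptation of Lemma 4.1 in Sire--Xu and points to the conformal transformation laws \eqref{confchange}. You have filled in the missing details accurately: the exponent bookkeeping $-\tfrac{n+2}{n-2}+\tfrac{2n}{n-2}=1$, $-\tfrac{4}{n-2}+\tfrac{2n}{n-2}=2$, $-\tfrac{n}{n-2}+\tfrac{2(n-1)}{n-2}=1$ is right, the integration by parts transferring $-\Delta_g u$ onto $f^2 u$ produces exactly the boundary term $-\int_{\partial M}f^2 u\,\partial_\eta u\,da_g$ that cancels against the one coming from $B_g u$, and the remaining interior identity $u^2|\nabla_g f|^2 + \langle\nabla_g(f^2 u),\nabla_g u\rangle_g = |\nabla_g(uf)|^2$ is an elementary expansion. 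The closing remark about approximating a Lipschitz $f$ by smooth functions is the right way to justify the statement as applied to the test functions $u_i/u$ in the paper, which are only Lipschitz.
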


We now proceed to the proof of Theorem \ref{thm:letterB}.

\begin{proof}[\bf Proof of Theorem \ref{thm:letterB}]
As in the proof of Theorem \ref{thm:letterA}, we introduce a non-atomic Borel measure $\mu$ supported on $\mathbb{S}^{n-1}$, defined for every open subset $O \in \mathbb{R}^n$ as
$$
\mu(O) \doteq \int_{O\cap \mathbb{S}^{n-1}}u^{\frac{2}{n-2}}\,da_{g}.
$$
We then apply Proposition \ref{non-atom} to the metric measure space $(\mathbb{R}^n,d_{can},\mu)$ to find a collection $\{A_i\}$ of $2(k+1)$ annuli on $\mathbb{R}^n$ such that the annuli $\{2A_i\}$ are mutually disjoint, and
\begin{equation}
\label{denom11}
\mu(A_i)\geqslant c\frac{\mu(\mathbb{R}^n)}{4k}= \frac{c}{4k}\int_{\mathbb{S}^{n-1}} u^{\frac{2}{n-2}}da_g,
\end{equation}
for all $i=1,\ldots,2k+2$, with $c>0$ depending only on $n$. Moreover, the annuli can be reordered so that
\begin{equation}
\label{denom22}
\vol_g(2\mathcal{A}_i)\leqslant \frac{\vol_g(\mathbb{B}^{n})}{k} \ \ \text{and} \ \ \vol_g(2\tilde A_i)\leqslant \frac{\vol_g(\mathbb{S}^{n-1})}{k},
\end{equation}
where $2\mathcal{A}_i = 2A_i \cap \mathbb{B}^{n}$ and $2\tilde A_i = 2A_i\cap \mathbb{S}^{n-1}$.

Recall that the Rayleigh quotient for our problem is given by
\begin{equation}\label{rayleigh_kor}
    \mathcal{R}_{\bar g}(f) = \frac{\displaystyle \int_{\mathbb{B}^{n}} (|\nabla_{\bar{g}} f|^{2}  + c_{n}R_{\bar{g}}f^{2})dv_{\bar g} + b_{n} \int_{\mathbb{S}^{n-1}} H_{\bar{g}}f^{2}da_{\bar g}}{\displaystyle \int_{\mathbb{S}^{n-1}}f^{2}da_{\bar g}},
\end{equation}
where $b_{n}=\frac{n-2}{2}$ and $c_{n}=\frac{n-2}{4(n-1)}$. As test functions for the Rayleigh quotient, we consider $f_i = \frac{u_i}{u}$, where the functions $u_i$ are those constructed in Section \ref{sec_test-functions}, in this case with $\psi = \mathrm{Id} \colon \mathbb{B}^n \to \mathbb{B}^n$, and whose supports are contained in the mutually disjoint annuli $2\mathcal{A}_i$ for $i = 1, \ldots, k+1$.

Applying Lemma~\ref{main_lemma_test}, we obtain a lower bound for the denominator in~\eqref{rayleigh_kor} as follows:
\begin{eqnarray*}
\int_{\mathbb{S}^{n-1}} f_i^2 da_{\bar g} 
&=&  \int_{\mathbb{S}^{n-1}} u_i^2 u^{\frac{2}{n-2}}da_{g} \nonumber\\[0.2cm]
&\geqslant & \frac{3^4}{5^4}\int_{\mathbb{S}^{n-1}\cap 2{A}_i} u^{\frac{2}{n-2}}da_{g}.
\end{eqnarray*}
Therefore, from \eqref{denom11} there exists a constant $C>0$, depending only on the dimension $n$, such that 
\begin{equation}\label{esti_below}
\int_{\mathbb{S}^{n-1}} f_i^2 da_{\bar g} 
\geq \frac{3^4}{5^4}\mu(2{A}_i)\geq \frac{C}{k} \int_{\mathbb{S}^{n-1}} u^{\frac{2}{n-2}}da_g.
\end{equation}
For the numerator in \eqref{rayleigh_kor}, Lemma \ref{lem} yields
\begin{multline*}
\int_{\mathbb{B}^{n}}(|\nabla_{\bar{g}} f_i|^{2}dv_{\bar g} + c_{n}R_{\bar{g}}f_i^{2})dv_{\bar g} + b_{n}\int_{\mathbb{S}^{n-1}}H_{\bar{g}}f_i^{2}da_{\bar g} = \\[0.2cm]
 \int_{\mathbb{B}^{n}}|\nabla_{{g}} u_i|^{2}dv_{g} + b_{n}\int_{\mathbb{S}^{n-1}}H_{g}u_i^{2}da_{ g} \leq \\[0.2cm]
 \left( \int_{2\mathcal{A}_i} |\nabla u_i|^n \, dv_g \right)^{\frac{2}{n}}\vol_g(2\mathcal{A}_i)^{\frac{n-2}{n}} + b_{n}(n-1)\int_{2A_i\cap \mathbb{S}^{n-1}}u_i^{2}da_{g} .
\end{multline*}

We need to estimate the $L^n$-energy of the test functions $u_i$. By definition, case $\tilde r_i>0$, $u_i = \Phi_i \bar \Phi_i = (\beta_i \bar \beta_i)\circ F$, where $\beta_i$ and $\bar \beta_i$ are $1$-Lipschitz functions, and $F$ is a conformal diffeomorphism. By the conformal invariance of the energy and the fact that $\beta_i, \bar \beta_i \leq 1$, we deduce that
\begin{equation}\label{eq:conformal-invariance}
\int_{\mathbb{B}^n}|\nabla u_i|^n\,dv_{g}
= \int_{\mathbb{B}^n}|\nabla(\beta_i \bar \beta_i)|^n\,dv_g \leq 2^n \vol_g(\mathbb{B}^n).
\end{equation}

Thus, arguing as in the proof of Theorem \ref{thm:letterA}, and using  \eqref{denom2} and \eqref{eq:conformal-invariance}, we obtain
\begin{multline}\label{esti_abov}
\int_{\mathbb{B}^{n}}(|\nabla_{\bar{g}} f_i|^{2} + c_{n}R_{\bar{g}}f_i^{2})dv_{\bar g} + b_{n}\int_{\mathbb{S}^{n-1}}H_{\bar{g}}f_i^{2}da_{\bar g} \leq \\[0.2cm]
 (2^n\vol_g(\mathbb{B}^n))^{\frac{2}{n}} \frac{\vol_g(\mathbb{B}^n)^{\frac{n-2}{n}}}{k^{\frac{n-2}{n}}} + b_{n}(n-1)\frac{\vol_g(\mathbb{S}^{n-1})}{k} \leq\\[0.2cm]
 \left(4 \vol_g(\mathbb{B}^n) + b_{n}(n-1) \vol_g(\mathbb{S}^{n-1})\right) k^{-\frac{n-2}{n}}.
\end{multline}
Substituting \eqref{esti_below} and \eqref{esti_abov} into \eqref{rayleigh_kor} yields
\begin{eqnarray*}
    \mathcal{R}_{\bar g}(f_i)&\leqslant& \frac{C(n)}{\mu(\mathbb{R}^n)} k^{\frac{2}{n}}= C(n)\left( \int_{\mathbb{S}^{n-1}} u^{\frac{2}{n-2}}da_{g}\right)^{-1} k^{\frac{2}{n}},
\end{eqnarray*}
where $C(n) = 4 \vol_g(\mathbb{B}^n) + b_{n}(n-1) \vol_g(\mathbb{S}^{n-1})$. This finishes the proof of the theorem.
\end{proof}

We denote by $\mathcal{M}([g])$ the set of smooth metrics $\overline{g}$ in the conformal class $[g]$ such that the first Dirichlet eigenvalue of the conformal Laplacian $L_{\overline{g}}$, namely $\lambda_1^D(L_{\bar g})$, is positive (We refer to Section~\ref{sec_tec} for a justification of this restriction on the admissible metrics). Since the sign of $\lambda_k^D(L)$ is invariant under conformal change, it follows that $\mathcal{M}([g])$ is non-empty if and only if $\lambda_1^D(L)>0$, in which case $\mathcal{M}([g])$ coincides with the entire conformal class $[g]$, see the discussion in Remark \ref{remark_adaptation}.

Our second goal in this subsection is to improve the estimates obtained in Theorem \ref{thm:letterB} for the class of compact Riemannian manifolds $(M^n,g)$ with non-empty boundary $\partial M$ and dimension $n\geq 3$, which admit a proper conformal immersion $\psi \colon (M,g) \to\mathbb{B}^m$. The following result generalizes~\cite[Theorem 5.1]{SX}.

\begin{theorem}\label{thm:letterC}
If $(M,g)$ admits a proper conformal immersion $\psi \colon M \to\mathbb{B}^m$, then,
for every $\bar{g} \in \mathcal{M}([g])$, the normalized $k$-th eigenvalue of the conformal Dirichlet-to-Robin problem \eqref{eing2} on $M$   satisfies
   \begin{eqnarray*}
 \bar \sigma_k(M,B_{\bar g}) &\leq&\Bigg[4n V_{rc}(M,m,\psi)^{\frac{2}{n}} + c_n \vol_g(M)^{\frac{n-2}{n}} \vert R_g\vert_{L^{\frac{n}{2}}(M)} + \\
 & & b_n \vert H_g\vert_{L^{\frac{n}{2}}(\partial M)}  \vol_g(\partial M)^{\frac{n-2}{n}}\Bigg] k^{\frac{2}{n}}.
\end{eqnarray*}
\end{theorem}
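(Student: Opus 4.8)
The plan is to follow the template of the proof of Theorem~\ref{thm:letterB} almost verbatim, replacing the ball $\mathbb{B}^n$ by $(M,g)$ and the identity map by the given proper conformal immersion $\psi\colon M\to\mathbb{B}^m$. First I would fix $\bar g=u^{4/(n-2)}g\in\mathcal M([g])$ and introduce the non-atomic Borel measure $\mu$ on $\mathbb{R}^m$ supported on $\mathbb{S}^{m-1}$ defined by $\mu(O)=\int_{\psi^{-1}(O\cap\mathbb{S}^{m-1})}u^{2/(n-2)}\,da_g$, so that $\mu(\mathbb{R}^m)=\int_{\partial M}u^{2/(n-2)}\,da_g$. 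Applying Proposition~\ref{non-atom} to $(\mathbb{R}^m,d_{\mathrm{can}},\mu)$ produces $2(k+1)$ annuli $\{A_i\}$ with $\{2A_i\}$ pairwise disjoint and $\mu(A_i)\ge c\,\mu(\mathbb{R}^m)/(4k)$; after reordering, the first $k+1$ of them satisfy the volume bounds $\vol_g(\psi^{-1}(2\mathcal A_i))\le \vol_g(M)/k$ and $\vol_g(\psi^{-1}(2\tilde A_i))\le \vol_g(\partial M)/k$. Then I take the test functions $f_i=u_i/u$ with $u_i$ as in \eqref{test_fuction_u}, supported on the disjoint sets $\psi^{-1}(2\mathcal A_i)$, so the traces are mutually $L^2(\partial M)$-orthogonal and the min-max characterization gives $\sigma_k(M,B_{\bar g})\le \mathcal R_{\bar g}(f_i)$ for some $i$.

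The next step is to bound the Rayleigh quotient. For the denominator, Lemma~\ref{main_lemma_test}(ii) gives $\int_{\partial M}f_i^2\,da_{\bar g}=\int_{\partial M}u_i^2 u^{2/(n-2)}\,da_g\ge (3/5)^4\mu(A_i)\ge (C/k)\int_{\partial M}u^{2/(n-2)}\,da_g$. For the numerator, Lemma~\ref{lem} (applied on $M$, which holds for Lipschitz $f_i$) converts the $\bar g$-quantity $\int_M(|\nabla_{\bar g}f_i|^2+c_nR_{\bar g}f_i^2)dv_{\bar g}+b_n\int_{\partial M}H_{\bar g}f_i^2 da_{\bar g}$ into $\int_M|\nabla_g u_i|^2 dv_g+c_n\int_M R_g u_i^2 dv_g+b_n\int_{\partial M}H_g u_i^2 da_g$. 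Here the gradient term is handled exactly as in Theorem~\ref{thm:letterA}: Hölder plus the conformal invariance of the $L^n$-energy of $\beta_i,\bar\beta_i\circ F_i$ and the fact that $|\nabla F_i|^n\,dv_g$ integrates to $n^{n/2}\vol_g(M,(F_i\circ\psi)^*g_{\mathrm{can}})\le n^{n/2}V_{rc}(M,m,\psi)$, yielding $\int_M|\nabla_g u_i|^2 dv_g\le 4n\,V_{rc}(M,m,\psi)^{2/n}(\vol_g(M)/k)^{(n-2)/n}$. For the two curvature terms, use $0\le u_i\le 1$ and Hölder with exponents $n/2$ and $n/(n-2)$ on the support $\psi^{-1}(2\mathcal A_i)$ (resp. $\psi^{-1}(2\tilde A_i)$): $c_n\int R_g u_i^2 dv_g\le c_n|R_g|_{L^{n/2}(M)}\vol_g(\psi^{-1}(2\mathcal A_i))^{(n-2)/n}\le c_n|R_g|_{L^{n/2}(M)}(\vol_g(M)/k)^{(n-2)/n}$, and similarly $b_n\int_{\partial M}H_g u_i^2 da_g\le b_n|H_g|_{L^{n/2}(\partial M)}(\vol_g(\partial M)/k)^{(n-2)/n}$. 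Since $(n-2)/n\le 1$, each factor $k^{-(n-2)/n}$ dominates the trivial gain one would want, and combining numerator and denominator gives $\mathcal R_{\bar g}(f_i)\le\big[4nV_{rc}(M,m,\psi)^{2/n}+c_n\vol_g(M)^{(n-2)/n}|R_g|_{L^{n/2}(M)}+b_n|H_g|_{L^{n/2}(\partial M)}\vol_g(\partial M)^{(n-2)/n}\big]k^{2/n}\big/\int_{\partial M}u^{2/(n-2)}da_g$, which upon multiplying through by $\int_{\partial M}u^{2/(n-2)}da_g=\int_{\partial M}da_{\bar g}^{\,\text{(as in \eqref{normalized_eig_conf})}}$ is exactly the claimed bound after taking the infimum over admissible $\psi$ in the conformal-volume term.

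The only genuinely delicate point is making sure Lemma~\ref{lem} is applicable to the non-smooth test functions $f_i=u_i/u$: the $u_i$ are merely Lipschitz, not $C^\infty$. This is routine but should be addressed — one either approximates $u_i$ by smooth functions with the same support behaviour and passes to the limit using dominated convergence (all integrands are controlled by the fixed $L^{n/2}$ data of $R_g,H_g$ and the $L^n$ gradient bound), or one simply notes that the identity in Lemma~\ref{lem} extends to $W^{1,n}\cap L^\infty$ by density, which suffices here since $\psi$ is a conformal immersion so $(F_i\circ\psi)^*g_{\mathrm{can}}$ is comparable to $g$ on the compact $M$ and the energies transform conformally. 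A second minor subtlety is the case $\tilde r_i=0$, where $u_i=\varPhi_i\circ\psi$ rather than $(\varPhi_i\bar\varPhi_i)\circ\psi$; this is strictly easier (one fewer factor, hence a better constant) and handled by the same estimates, so it does not require separate treatment. With these points dispatched, the proof is a direct transcription of the Theorem~\ref{thm:letterB} argument with the curvature terms appended.
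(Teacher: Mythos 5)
Your proposal follows the paper's proof almost line for line: the weighted boundary measure $\mu$, the Grigor'yan--Netrusov--Yau covering argument, the test functions $f_i = u_i/u$, the reduction via Lemma~\ref{lem}, the gradient bound via conformal invariance of the $L^n$-energy, and the curvature terms via H\"older with exponents $n/2$ and $n/(n-2)$ are all exactly as in the paper. The remarks you add on extending Lemma~\ref{lem} to Lipschitz test functions and on the case $\tilde r_i = 0$ are correct and do not change the argument; note only that the theorem is stated with $V_{rc}(M,m,\psi)$ for the given $\psi$, so no final infimum over $\psi$ is taken.
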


\begin{remark}
 The requirement $\bar{g} \in \mathcal{M}([g])$ is a natural and non‑restrictive assumption in many geometric settings.
For instance, it is automatically satisfied if the scalar curvature is non‑negative in the metric $\bar g$.
\end{remark}

\begin{proof}[Proof of Theorem \ref{thm:letterC}]
The proof follows the same steps as the proof of Theorem \ref{thm:letterB}. Let $\psi \colon (M,g) \to\mathbb{B}^m$ be a proper conformal immersion. We consider the Borel measure
\[
\mu(O) \doteq \int_{\psi^{-1}(O \cap \mathbb{S}^{n-1})} u^{\frac{2}{n-2}} \, da_g,
\]
defined for all open subsets $O \subset \mathbb{R}^m$. As in the proof of Theorem~\ref{thm:letterB}, we apply Proposition~\ref{non-atom} to the metric measure space $(\mathbb{R}^m, d_{can}, \mu)$, obtaining a collection of $2(k+1)$ annuli $\{A_i\} \subset \mathbb{R}^m$ such that
\begin{equation}
\mu(A_i) \geqslant c \frac{\mu(\mathbb{R}^m)}{4k} = \frac{c}{4k} \int_{\partial M} u^{\frac{2}{n-2}} \, da_g,
\end{equation}
for every $i = 1, \ldots, 2k+2$, where $c > 0$ depends only on $n$. Since the annuli $\{2A_i\}$ are mutually disjoint, up to reordering the indices we also have
\begin{equation}
\label{denom3}
\vol_g(\psi^{-1}(2 \mathcal{A}_i)) \leqslant \frac{\vol_g(M)}{k} 
\quad \text{and} \quad 
\vol_g(\psi^{-1}(2\tilde A_i )) \leqslant \frac{\vol_g(\partial M)}{k}.
\end{equation}

For every $i = 1, \ldots, k+1$, we define $f_i \doteq \frac{u_i}{u}$, where $u_i$ are the test functions introduced in Section~\ref{sec_test-functions} and $u$ is the positive function satisfying $\bar g = u^{\frac{4}{n-2}} g$. Proceeding as in the proof of Theorem~\ref{thm:letterB}, we can estimate
\begin{equation}\label{numer_conf}
\int_{\partial M} f_i^2 \, da_{\bar g} \geq \frac{3^4}{5^4} \, \mu(2A_i) \geq  \frac{C}{k} \int_{\partial M} u^{\frac{2}{n-2}} \, da_g.
\end{equation}

On the other hand, from Lemma~\ref{lem} we have
\begin{multline}\label{ineq_f_i_1}
\int_M \left( |\nabla_{\bar g} f_i|^2 + c_n R_{\bar g} f_i^2 \right) dv_{\bar g} + b_n \int_{\partial M} H_{\bar g} f_i^2 \, da_{\bar g} \\[0.2cm]
= \int_M \left( |\nabla_g u_i|^2 + c_n R_g u_i^2 \right) dv_g + b_n \int_{\partial M} H_g u_i^2 \, da_{g} \\[0.2cm]
= \int_{\psi^{-1}(2\mathcal{A}_i)} \left( |\nabla_g u_i|^2 + c_n R_g u_i^2 \right) dv_g + b_n \int_{\psi^{-1}(2\tilde A_i)} H_g u_i^2 \, da_{g}.
\end{multline}

Following the argument in the proof of Theorem~\ref{thm:letterA} to estimate the gradient term (see \eqref{num}) and using \eqref{denom3}, we obtain
\begin{eqnarray}\label{ineq_numer1}
\int_{\psi^{-1}(2\mathcal{A}_i)} |\nabla_g u_i|^2 \, dv_g
&\leq & \left( \int_{2\mathcal{A}_i} |\nabla u_i|^n \, dv_g \right)^{\frac{2}{n}} \vol_g(2\mathcal{A}_i)^{\frac{n-2}{n}} \\[0.2cm]
&\leq & 4 n \, V_{rc}(M,m,\psi)^{\frac{2}{n}} \left( \frac{\vol_g(M)}{k} \right)^{\frac{n-2}{n}}. \nonumber
\end{eqnarray}

The remaining terms in \eqref{ineq_f_i_1} can be estimated as
\begin{multline}\label{ineq_numer2}
c_n \int_{\psi^{-1}(2\mathcal{A}_i)} R_g u_i^2 \, dv_g + b_n \int_{\psi^{-1}(2\tilde A_i)} H_g u_i^2 \, da_g \leq \\[0.2cm]
 c_n \vert R_g\vert_{L^{\frac{n}{2}}(\psi^{-1}(2\mathcal{A}_i))} \vert u_i\vert_{L^{\frac{2n}{n-2}}(\psi^{-1}(2\mathcal{A}_i))}^2 
+ b_n \vert H_g\vert_{L^{\frac{n}{2}}(\psi^{-1}(2\tilde{A}_i))} \vert u_i\vert_{L^{\frac{2n}{n-2}}(\psi^{-1}(2\tilde{A}_i))}^2  \leq\\[0.2cm]
 c_n \vert R_g\vert_{L^{\frac{n}{2}}(M)} \left( \frac{\vol_g(M)}{k} \right)^{\frac{n-2}{n}}
+ b_n \vert H_g\vert_{L^{\frac{n}{2}}(\partial M)} \left( \frac{\vol_g(\partial M)}{k} \right)^{\frac{n-2}{n}}.
\end{multline}
Combining \eqref{ineq_numer1} and \eqref{ineq_numer2}, we obtain
\begin{multline}
\int_{M}(|\nabla_{\bar{g}} f_i|^{2} + c_{n}R_{\bar{g}}f_i^{2})dv_{\bar g} + b_{n}\int_{\partial M}H_{\bar{g}}f_i^{2}da_{\bar g}\leq b_n \vert H_g\vert_{L^{\frac{n}{2}}(\partial M)} \left(\frac{\vol(\partial M)}{k}\right)^{\frac{n-2}{n}}\nonumber\\[0.2cm]
+  \left(\frac{\vol_g(M)}{k} \right)^{\frac{n-2}{n}}\left[4n V_{rc}(M,m,\psi)^{\frac{2}{n}} + c_n \vert R_g\vert_{L^{\frac{n}{2}}(M)}\right].    
\end{multline}\label{esti_abov1}
Substituting \eqref{numer_conf} and \eqref{esti_abov1} into the corresponding Rayleigh quotient, we conclude that
  \begin{eqnarray*}
  \sigma_k(M,B_{\bar{g}})\cdot\int_{\partial M}u^{\frac{2}{n-2}}da_g &\leq&\Bigg[4n V_{rc}(M,m,\psi)^{\frac{2}{n}} + c_n \vol_g(M)^{\frac{n-2}{n}} \vert R_g\vert_{L^{\frac{n}{2}}(M)} +\\[0.2cm]
 & & b_n \vert H_g\vert_{L^{\frac{n}{2}}(\partial M)} \vol_g(\partial M)^{\frac{n-2}{n}}\Bigg] k^{\frac{2}{n}}.
\end{eqnarray*}
\end{proof}

\subsection{Hersch-type result}\label{hersch_section}
 For closed manifolds, the first result concerning extremal geometries for the Laplace–Beltrami eigenvalues $\lambda_k$ was obtained by Hersch~\cite{H} for the two-sphere. In fact, he proved that the standard metric maximizes the first Laplacian eigenvalue $\lambda_1$ among all Riemannian metrics of the same area, and moreover, it is the unique maximizer up to isometry. 
 In this subsection, we present a Hersch-type result in our setting.

Let $M^{n}$ be a compact Riemannian manifold with boundary $\partial M$. We introduce the following eigenvalue functional 
\begin{equation*}
    \overline{\sigma}_{k}(M,[g],B_{\bar g})=\sigma_{k}(M,B_{\bar g}) \vol_{\bar g}(\partial M,1/2), 
\end{equation*}
where $\bar{g} \in [g]$ is a metric conformally related to $g$ via $\bar{g} = u^{\frac{4}{n-2}} g$ for some positive smooth function $u$, and
\[
\vol_{\bar g}(\partial M,p) \doteq \int_{\partial M} u^{\frac{4p}{n-2}} \, da_g.
\]

Next, we consider the supremum over the conformal class
\begin{equation*}
    \sigma^*_{k}(M,B_g)=\sup_{\bar{g}\in[g]}\overline{\sigma}_{k}(M,[g],B_{\bar g}).
\end{equation*}

The following result addresses the geometric problem of determining whether the supremum $\sigma^*_{k}(M,B_g)$ is attained by a Riemannian metric in a particular case. In the literature, such a metric is referred to as maximal.

\begin{theorem}\label{thm:letterD}
  Let $(\mathbb{B}^{n},g)$ be the Euclidean unit ball. Then
\begin{equation*}
\sigma^*_{1}(\mathbb{B}^{n},B_g) = \overline{\sigma}_{1}(\mathbb{B}^{n},[g],B_g).
\end{equation*}
Moreover, if for some metric $\bar g\in[g]$ we have 
\begin{equation*}
\sigma^*_{1}(\mathbb{B}^{n},B_g) = \overline{\sigma}_{1}(\mathbb{B}^{n},[g],B_{\bar g}),
\end{equation*}
then $\bar g$ coincides with the Euclidean metric up to multiplication by a positive constant.
\end{theorem}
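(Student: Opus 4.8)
The plan is to run the standard Hersch-type argument, adapted to the conformal Dirichlet-to-Robin setting on the ball, with the known explicit first eigenpair of $B_g$ on $(\mathbb{B}^n,g)$ as the comparison object. First I would identify $\sigma_1(\mathbb{B}^n,B_g)$ together with its eigenspace: for the round unit ball the conformal boundary operator $B_g u=\partial_\eta u+b_nH_gu$ with $L_gu=0$ has first eigenvalue realized by the restrictions of the linear coordinate functions $x_1,\dots,x_n$, which are $L_g$-harmonic and satisfy $\partial_\eta x_i=x_i$ on $\mathbb{S}^{n-1}$ (together with the mean-curvature term this gives a single value $\sigma_1=b_n+1=\tfrac n2$, up to the normalization conventions fixed in Section~\ref{sec_tec}); I would record the precise value and the fact that the first eigenspace is exactly $\mathrm{span}\{x_1,\dots,x_n\}$. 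Then, for an arbitrary competitor $\bar g=u^{4/(n-2)}g\in[g]$, I would use the conformal transfer identity from Lemma~\ref{lem}: testing the Rayleigh quotient $\mathcal{R}_{\bar g}$ with a function $f=\Phi/u$ is equivalent to computing, with respect to the fixed background metric $g$, the quantity $\int_{\mathbb{B}^n}(|\nabla_g\Phi|^2+c_nR_g\Phi^2)\,dv_g+b_n\int_{\mathbb{S}^{n-1}}H_g\Phi^2\,da_g$ over $\int_{\mathbb{S}^{n-1}}\Phi^2 u^{2/(n-2)}\,da_g$, i.e.\ the numerator is metric-independent (it is just the Dirichlet-type form for $g$) while the only trace of $\bar g$ sits in the denominator via $da_{\bar g}=u^{2(n-1)/(n-2)}da_g$ and the normalizing weight $u^{2/(n-2)}$.

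The heart of the argument is Hersch's balancing trick. Using the group $G$ of conformal diffeomorphisms of $\mathbb{B}^n$, I would show that for the given weight $d\nu\doteq u^{2/(n-2)}\,da_g$ on $\mathbb{S}^{n-1}$ there exists $\gamma\in G$ such that the $\nu$-barycenter of $\gamma$ vanishes, i.e.\ $\int_{\mathbb{S}^{n-1}}(\gamma|_{\mathbb{S}^{n-1}})\,d\nu=0\in\mathbb{R}^n$; this is the classical center-of-mass/degree argument (the measure $d\nu$ is a finite Borel measure, not necessarily absolutely continuous, but Hersch's lemma only needs finiteness). Taking $\Phi_i\doteq\gamma^*x_i=x_i\circ\gamma$ for $i=1,\dots,n$, each $\Phi_i$ is again $L_g$-harmonic (conformal covariance of $L_g$ in dimension $n$, i.e.\ $\Phi_i=w\,(x_i\circ\gamma)$ has the same conformal weight — here I must be careful, since $\gamma$ acts with a conformal factor, the correct statement is that $|\gamma^*x|_g^2=1$ on $\mathbb{S}^{n-1}$ and $\sum_i(\partial_\eta+b_nH_g)(\gamma^*x_i)^2$ matches $\sigma_1\sum_i(\gamma^*x_i)^2$ after using the conformal transfer identity again with the conformal factor of $\gamma$). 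More cleanly: because $B_g$ is conformally covariant, the pullback $\gamma^*$ maps the first eigenspace of $B_{g}$ to the first eigenspace of $B_{\gamma^*g}$, and a second application of Lemma~\ref{lem}-type transfer shows $\mathcal{R}_{\bar g}(\Phi_i/u)$ is still governed by $\sigma_1$ on the round ball. Summing the $n$ test functions, dividing numerator and denominator, and using $\sum_i\Phi_i^2=|\gamma|^2=1$ on $\mathbb{S}^{n-1}$ yields
\begin{equation*}
\sigma_1(\mathbb{B}^n,B_{\bar g})\int_{\mathbb{S}^{n-1}}u^{2/(n-2)}\,da_g\ \le\ \sigma_1(\mathbb{B}^n,B_g)\,\vol_g(\mathbb{S}^{n-1}),
\end{equation*}
and since for $\bar g=g$ equality holds (the $x_i$ are genuine first eigenfunctions), one gets $\overline{\sigma}_1(\mathbb{B}^n,[g],B_{\bar g})\le\overline{\sigma}_1(\mathbb{B}^n,[g],B_g)$, proving the first assertion. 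The barycenter choice forces the test functions to be $L^2(d\nu)$-orthogonal to constants, which is exactly the admissibility condition in the variational characterization of $\sigma_1$ if $0$ is the bottom of the spectrum; if the conventions put $\sigma_1>0$ with constants not in the domain, this orthogonality step is unnecessary and the argument only simplifies.

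For the rigidity statement, suppose $\overline{\sigma}_1(\mathbb{B}^n,[g],B_{\bar g})=\sigma^*_1$. Tracing equality back through the chain, each test function $\Phi_i/u$ must be an actual first eigenfunction of $B_{\bar g}$, hence (undoing the conformal transfer) $x_i\circ\gamma$ must lie in the first eigenspace of $B_{\gamma^*g}$ for the relevant conformal representative, and more to the point the weight must satisfy $d\nu=c\,da_g$ for a constant $c$: equality in the step where we passed from $\int\Phi_i^2\,d\nu$ to $\int u^{2/(n-2)}da_g$ via $\sum\Phi_i^2=1$ is automatic, so the real constraint is that for \emph{every} $\gamma\in G$ realizing a balanced barycenter the inequality is tight, which by varying $\gamma$ forces $u^{2/(n-2)}$ to be conformally distributed like the constant density — concretely $u^{4/(n-2)}g$ and $g$ have proportional boundary volume densities under every Möbius transformation, which pins $u$ down to a constant on $\mathbb{S}^{n-1}$, and then $L_gu=0$ with $u>0$ constant trace and the maximum principle (here $\lambda_1^D(L_g)>0$ is used) gives $u\equiv\mathrm{const}$ on all of $\mathbb{B}^n$. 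I expect the main obstacle to be precisely this rigidity step: making the "equality forces the density to be conformally rigid" argument airtight, since one must exploit the full freedom of $G$ rather than a single balancing map, and must handle the possibility that $d\nu$ is not absolutely continuous. A clean way around it is to first use regularity of eigenfunctions to upgrade $u$ to be smooth and positive, reducing to densities of the form $u^{2/(n-2)}\,da_g$ with $u\in C^\infty$, and then invoke the classical fact (as in Li–Yau / El Soufi–Ilias) that a measure on $\mathbb{S}^{n-1}$ whose pushforward under a balancing Möbius map equals the standard one, with matching total mass under all such maps, must be the standard measure up to scale.
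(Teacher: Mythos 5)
Your proposal contains a genuine error at the outset that derails the whole argument: you misidentify the first eigenpair of the conformal Dirichlet-to-Robin operator on $(\mathbb{B}^n,g)$. You claim $\sigma_1(\mathbb{B}^n,B_g)=b_n+1=\tfrac{n}{2}$ with the coordinate functions $x_1,\dots,x_n$ as first eigenfunctions, but the constant function $1$ is $L_g$-harmonic on the Euclidean ball (since $R_g=0$) and satisfies $B_g 1 = \partial_\eta 1 + b_n H_g = b_n H_g$, which is strictly smaller than $1+b_n$. So in the paper's labeling convention (eigenvalues listed in increasing order), $\sigma_1(\mathbb{B}^n,B_g)=b_nH_g$ with the constant as eigenfunction; the coordinate functions realize $\sigma_2$. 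You appear to have transferred intuition from the Steklov problem, where the constant gives the trivial eigenvalue $0$ and the first \emph{nonzero} eigenvalue is the interesting one realized by coordinates. Here, because of the $b_nH_g$ term in the Robin condition, the constant is already a nontrivial eigenfunction with positive eigenvalue, and it is the extremal one.

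Because of this misidentification, the entire Hersch balancing apparatus you construct is unnecessary (and aimed at the wrong eigenvalue). The paper's proof is much shorter and exploits exactly the point you missed: take the single test function $1/u$ (where $\bar g=u^{4/(n-2)}g$), which under the conformal transfer identity in Lemma~\ref{lem} corresponds to testing with the constant function $1$ with respect to the background metric. The numerator then collapses to $b_n\int_{\mathbb{S}^{n-1}}H_g\,da_g$ and the denominator to $\int_{\mathbb{S}^{n-1}}u^{2/(n-2)}\,da_g$, giving
\[
\sigma_1(\mathbb{B}^n,B_{\bar g})\int_{\mathbb{S}^{n-1}}u^{2/(n-2)}\,da_g\ \le\ b_n\int_{\mathbb{S}^{n-1}}H_g\,da_g=\overline{\sigma}_1(\mathbb{B}^n,[g],B_g),
\]
with no balancing needed. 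For rigidity: equality forces $1/u$ to be a first eigenfunction of $B_{\bar g}$, and applying the conformal transformation law \eqref{confchange} to $B_{\bar g}(1/u)$ shows directly that $\sigma_1(\mathbb{B}^n,B_{\bar g})u^{2/(n-2)}=\sigma_1(\mathbb{B}^n,B_g)$ on $\mathbb{S}^{n-1}$, hence $u$ is constant on the boundary, and then $L_g u=0$ with constant boundary data and $\lambda_1^D(L_g)>0$ gives $u$ constant. Your rigidity discussion (varying $\gamma\in G$ over balancing maps to pin down the density) is both overcomplicated and not quite airtight, whereas the paper's one-line use of \eqref{confchange} is clean. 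I would recommend revisiting the spectrum of $B_g$ on the ball and noting that the problem you are solving is the analogue of Hersch for the bottom eigenvalue, not the first nonzero one.
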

\begin{proof}
Considering the function $1/u$, where $\bar{g} = u^{\frac{4}{n-2}} g$, as a test function for the Rayleigh quotient, we have
\begin{align*}
\sigma_{1}(\mathbb{B}^{n},B_{\bar g}) 
&\leq \frac{\displaystyle \int_{\mathbb{B}^{n}} \Big( |\nabla_{\bar g}(1/u)|^{2} + c_n R_{\bar g} (1/u)^{2} \Big) dv_{\bar g} + b_n \int_{\mathbb{S}^{n-1}} H_{\bar g} (1/u)^{2} da_{\bar g}}{\displaystyle \int_{\mathbb{S}^{n-1}} (1/u)^{2} da_{\bar g}} \\
&\leq \frac{\displaystyle \int_{\mathbb{S}^{n-1}} b_n H_g \, da_g}{\displaystyle \int_{\mathbb{S}^{n-1}} u^{\frac{2}{n-2}} da_g},
\end{align*}
where we used Lemma~\ref{lem}. Since $\sigma_1(\mathbb{B}^n,B_g) = b_n H_g$, it follows that
\begin{equation}\label{eq_hersch_1}
\sigma_{1}(\mathbb{B}^{n},B_{\bar g}) \leq \frac{\overline{\sigma}_{1}(\mathbb{B}^{n},[g],B_g)}{\displaystyle \int_{\mathbb{S}^{n-1}} u^{\frac{2}{n-2}} da_g}.
\end{equation}
Therefore,
\begin{equation*}
\sigma_{1}(\mathbb{B}^{n},B_{\bar g}) \, \vol_{\bar g}(\mathbb{S}^{n-1},1/2)
= \sigma_{1}(\mathbb{B}^{n},B_{\bar g}) \cdot \int_{\mathbb{S}^{n-1}} u^{\frac{2}{n-2}} da_g
\leq \overline{\sigma}_{1}(\mathbb{B}^{n},[g],B_g).
\end{equation*}

This completes the first part of the result. If equality holds, $1/u$ is the eigenfunction associated with $\sigma_1(\mathbb{B},B_g)$. By the transformation law \eqref{confchange}, we have 
$$
\sigma_1(\mathbb{B}^n,B_{\bar g})u^{\frac{2}{n-2}}=u^{\frac{n}{n-2}}\left(\frac{\partial}{\partial \bar{\nu}}+b_{n}H_{\bar{g}}\right){(1/u)} = B_g1 = \sigma_1(\mathbb{B}^n,B_{ g}),$$
since equality holds on \eqref{eq_hersch_1}. 
Thus, we must have  $u$ constant.
\end{proof}

\section{Negative eigenvalues and index estimates}\label{negative eigen}

This section is devoted to the study of the negative eigenvalues and index estimates of the Steklov-type problem
\begin{equation}\label{steklov_schr_prob}
\begin{cases}
\begin{array}{rl}
\mathfrak{L}_g = -\Delta_g u + \mathfrak{p}\,u = 0 & \text{in } M, \\[0.2cm]
\mathfrak{B}_g = \dfrac{\partial u}{\partial \eta_g} - \mathfrak{q}\,u = \sigma\,u & \text{on } \partial M,
\end{array}
\end{cases}
\end{equation}
where $\eta_g$ denotes the outward unit normal vector field along $\partial M$. Our main goal is to derive an estimate for the number of negative eigenvalues of problem \eqref{steklov_schr_prob}, in the spirit of the bound obtained in \cite[Theorem 1.2]{kok}, which in turn generalizes estimates from \cite{gny}. For our next result we will assume the following conditions on the potential functions 
\begin{equation}\label{cond_potential_func}
\mathfrak{p} \in L^{p}(M), \ \  p > \tfrac{n}{2}, 
\qquad \text{and} \qquad \mathfrak{q} \in L_+^\infty(\partial M), \ \  \mathfrak{q}\not\equiv 0.
\end{equation}

\begin{theorem}\label{thm:letterE}
Let $(M^{n},g)$ be a compact Riemannian manifold with $\partial M \neq \emptyset$ and $n\geq 2$. 
Let $m \in \mathbb{Z}_+$ such that the $m$-dimensional conformal volume of $M$ is defined. If $\lambda_1(\mathfrak L_g^D)>0$ and condition \eqref{cond_potential_func} is satified, then the number of negative eigenvalues of problem \eqref{steklov_schr_prob} satisfies
\begin{equation}\label{ineq_neg}
   \mathrm{Neg}(\mathfrak{p},\mathfrak{q}) 
   \geq 
   \frac{C\,\mathrm{vol}_g(M)^{\frac{2-n}{2}}}
   {\displaystyle
   \left[
   V_{rc}(M,m)^{\frac{2}{n}}
   + \vert {\mathfrak{p}}_{+}\vert_{L^\frac{n}{2}(M)}
   \right]^{\!\frac{n}{2}}
   }
   \left(
   \int_{\partial M} \mathfrak{q}\,da_g 
   \right)^{\!\frac{n}{2}},
\end{equation}
where $C>0$ is a constant depending only on $n$ and $m$.
\end{theorem}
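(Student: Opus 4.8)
The plan is to reverse the usual min-max logic: instead of bounding $\sigma_k$ from above by a test-function count, we want to produce, for a suitable $N = N(\mathfrak p,\mathfrak q)$, an $N$-dimensional subspace of $H^1(M)$ on which the quadratic form $\mathfrak a[u]=\int_M(|\nabla u|^2+\mathfrak p u^2)\,dv_g-\int_{\partial M}\mathfrak q u^2\,da_g$ is negative. By the variational characterization of $\mathrm{Neg}(\mathfrak p,\mathfrak q)=\mathcal N_0(\mathfrak p,\mathfrak q)$ recalled in Section~\ref{sec_tec}, this gives $\mathrm{Neg}(\mathfrak p,\mathfrak q)\geq N$. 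The natural candidate subspace is the span of the $N$ test functions $u_i$ built in Section~\ref{sec_test-functions} from a family of $N$ disjoint annuli; since these have disjoint supports, it suffices to arrange that $\mathfrak a[u_i]<0$ for each $i$ individually, and then $N = \dim\mathrm{span}\{u_i\}$ works.

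First I would fix the free parameter $N$ (to be chosen at the end as roughly the right-hand side of \eqref{ineq_neg}) and, following the proof of Theorem~\ref{thm:letterA}, apply Proposition~\ref{non-atom} to the metric measure space $(\mathbb R^m, d_{\mathrm{can}}, \mu)$, where now $\mu$ is the pushforward of the \emph{weighted} boundary measure $\mathfrak q\,da_g$ under $\psi$, i.e. $\mu(O)=\int_{\psi^{-1}(O\cap\mathbb S^{m-1})}\mathfrak q\,da_g$; condition \eqref{cond_potential_func} ($\mathfrak q\in L^\infty_+$, $\mathfrak q\not\equiv 0$) guarantees $\mu$ is a non-atomic finite Radon measure with total mass $\int_{\partial M}\mathfrak q\,da_g>0$. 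This yields $2N$ annuli $\{A_i\}$ with $\{2A_i\}$ disjoint and $\mu(A_i)\geq c\,\mu(\mathbb R^m)/(2N)$; reordering, the first $N$ also satisfy $\mathrm{vol}_g(\psi^{-1}(2\mathcal A_i))\leq \mathrm{vol}_g(M)/N$. For the associated test functions $u_i$ from \eqref{test_fuction_u}, Lemma~\ref{main_lemma_test} gives the denominator-type lower bound $\int_{\partial M}\mathfrak q u_i^2\,da_g\geq (3/5)^4\mu(\mathcal A_i)\geq C\,\mu(\mathbb R^m)/N$, while the gradient estimate from the proof of Theorem~\ref{thm:letterA} gives $\int_M|\nabla u_i|^2\,dv_g\leq 4n\,V_{rc}(M,m,\psi)^{2/n}(\mathrm{vol}_g(M)/N)^{(n-2)/n}$, and Hölder applied to the potential term gives $\int_M\mathfrak p_+ u_i^2\,dv_g\leq |\mathfrak p_+|_{L^{n/2}(M)}(\mathrm{vol}_g(M)/N)^{(n-2)/n}$ (using $|u_i|\le1$ and $|\mathrm{supp}\,u_i|\le \mathrm{vol}_g(M)/N$; note $n/2 \le p$ so $\mathfrak p_+ \in L^{n/2}$ on the compact $M$, and $\int_M \mathfrak p u_i^2 \le \int_M \mathfrak p_+ u_i^2$).

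Combining, $\mathfrak a[u_i]\leq \big(4n\,V_{rc}(M,m,\psi)^{2/n}+|\mathfrak p_+|_{L^{n/2}(M)}\big)(\mathrm{vol}_g(M)/N)^{(n-2)/n} - C\,N^{-1}\int_{\partial M}\mathfrak q\,da_g$. This is negative as soon as
\[
N \;\geq\; C'\,\frac{\big(V_{rc}(M,m,\psi)^{2/n}+|\mathfrak p_+|_{L^{n/2}(M)}\big)^{n/2}\,\mathrm{vol}_g(M)^{(n-2)/2}}{\big(\int_{\partial M}\mathfrak q\,da_g\big)^{n/2}},
\]
so taking $N$ equal to the floor of the right-hand side (with $C'$ absorbing $4n$ and the constant $c$ from Proposition~\ref{non-atom}) and then passing to the infimum over admissible $\psi$ to replace $V_{rc}(M,m,\psi)$ by $V_{rc}(M,m)$ gives \eqref{ineq_neg}. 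The main obstacle is bookkeeping rather than conceptual: I need the number of disjoint annuli demanded from Proposition~\ref{non-atom} to be genuinely \emph{at least} the target $N$ (so the inequality in \eqref{ineq_neg} goes the right way — here the disjoint-support trick is what lets me count $\mathrm{span}\{u_i\}$ exactly), and I must be careful that the hypothesis $\lambda_1(\mathfrak L_g^D)>0$ is what makes the whole $H^1(M)$ Rayleigh-quotient characterization of $\mathrm{Neg}$ legitimate, as emphasized after Escobar's addendum in Section~\ref{sec_tec}; a minor point is ensuring the measure $\mu$ is non-atomic, which is immediate since $da_g$ is non-atomic and $\mathfrak q\in L^\infty$ is an honest density.
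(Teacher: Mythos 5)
Your proposal follows the paper's proof in every essential respect: same pushforward measure $\mu(O)=\int_{\psi^{-1}(O\cap\mathbb S^{m-1})}\mathfrak q\,da_g$ fed into Proposition~\ref{non-atom}, same pigeonhole reordering to control $\mathrm{vol}_g(\psi^{-1}(2\mathcal A_i))$, same test functions $u_i$ from Section~\ref{sec_test-functions}, same $V_{rc}$ gradient estimate and H\"older bound for $\int \mathfrak p_+ u_i^2$, and same disjoint-support counting to conclude $\mathrm{Neg}(\mathfrak p,\mathfrak q)\ge N$.

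One thing to fix is the final display: from
\[
\mathfrak a[u_i]\le\Bigl(4n\,V_{rc}^{2/n}+|\mathfrak p_+|_{L^{n/2}(M)}\Bigr)\Bigl(\tfrac{\mathrm{vol}_g(M)}{N}\Bigr)^{\frac{n-2}{n}}-\frac{C}{N}\int_{\partial M}\mathfrak q\,da_g,
\]
multiplying through by $N$ gives $\bigl(4nV_{rc}^{2/n}+|\mathfrak p_+|\bigr)\mathrm{vol}_g(M)^{\frac{n-2}{n}}N^{2/n}<C\int_{\partial M}\mathfrak q\,da_g$, so the right condition is that $N$ be \emph{small}, namely
\[
N\;<\;C'\,\frac{\bigl(\int_{\partial M}\mathfrak q\,da_g\bigr)^{n/2}}{\bigl(V_{rc}(M,m,\psi)^{2/n}+|\mathfrak p_+|_{L^{n/2}(M)}\bigr)^{n/2}\,\mathrm{vol}_g(M)^{(n-2)/2}},
\]
not ``$N\ge$'' with the fraction inverted as you wrote. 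Your last sentence (take $N$ to be the floor of the right-hand side of \eqref{ineq_neg}) shows you had the correct threshold in mind, so this is just a slip in the display. You should also arrange a strict inequality somewhere, since $\lfloor T\rfloor=T$ is possible when $T$ is an integer; the paper does this by loosening the constant $4n$ to $9n$ in the numerator bound. With these repairs the argument is identical to the paper's.
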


\begin{proof}
Following the notation in the proof of Theorem \ref{thm:letterA}, let $\psi \colon M \rightarrow \mathbb{B}^m$ be a proper conformal immersion. In accordance with the proof of \cite[Theorem 1.12]{kok}, we consider 
$(\mathbb{B}^m, \omega)$, where $\omega$ denotes the push-forward of the 
Riemannian volume measure $\mathrm{vol}_g$ on $M$ under the immersion $\psi$. 
We also consider the metric measure space $(\mathbb{R}^m, d_{\mathrm{can}}, \mu)$, 
where $\mu$ is a non-atomic Borel measure defined for every open set 
$O \subset \mathbb{R}^m$ by 
\[
\mu(O) = \int_{\psi^{-1}(O \cap \mathbb{S}^{m-1})} \mathfrak{q}\, da_g.
\]
We can invoke Proposition \ref{non-atom} to guarantee the existence of $2(k+1)$ mutually disjoint annuli $\{A_i\} \subset \mathbb{R}^m$ satisfying
\begin{equation}\label{eq_mu_thm5}
\mu(A_i)\geq c\frac{\mu(\mathbb{R}^m)}{2(k+1)}\geq \frac{c}{4k}\int_{\partial M}\mathfrak{q}\, da_g ,
\end{equation}
for every $i=1,\ldots,2k+2$. Since the annuli $2A_i$ are mutually disjoint, we also have 
\begin{equation}
\mu(2\mathcal{A}_i)\leqslant \frac{\mu(\mathbb{B}^{m})}{k+1}\leqslant \frac{\vol_g(M)}{k}.
\end{equation}

We then set $k\geq 0$ to be the integer part of
\begin{equation*}
    \left(\frac{9c}{2500}\right)^{\frac{n}{2}}\frac{\vol_g(M)^{\frac{2-n}{2}}}{n^{\frac{n}{2}}\left[V_{rc}(M,m)^{\frac{2}{n}}+\displaystyle\vert {\mathfrak{p}}_{+}\vert_{L^\frac{n}{2}(M)}\right]^{\frac{n}{2}}}\left(\int_{\partial M}\mathfrak{q}da_g\right)^{\frac{n}{2}},
\end{equation*}
where $c$ is the constant appearing in \eqref{eq_mu_thm5}. The remainder of the proof consists in showing that there exist $k+1$ test functions $u_i$, with mutually disjoint supports, such that
    \begin{equation}\label{ineq_neg_count}
        \int_{M}|\nabla u_{i}|^{2}dv_g + \int_{M}\mathfrak{p}u^{2}_{i}dv_g< \int_{\partial M}\mathfrak{q} u^{2}_{i}da_g.
    \end{equation}
Indeed, this latter inequality implies that 
    \begin{equation*}
    \mathcal{N}(\mathfrak{p},\mathfrak{q}) \geqslant k+1\geq\frac{C(n,m)\vol_g(M)^{\frac{2-n}{2}}}{\left[V_c(m,M)^{\frac{2}{n}}+\displaystyle\vert {\mathfrak{p}}_{+}\vert_{L^\frac{n}{2}(M)}\right]^{\frac{n}{2}}} 
    \left( \int_{\partial M} \mathfrak{q}da_g \right)^{n/2}.
    \end{equation*}

Let $u_i$ be the Lipschitz test functions defined in~\eqref{test_fuction_u}, whose supports are contained in $\psi^{-1}(2\mathcal{A}_i)$. 
Using~\eqref{num} and H\"older inequality, the left-hand side of~\eqref{ineq_neg_count} can be estimated as
\begin{multline*}
\int_{M} \big(|\nabla u_{i}|^{2} + \mathfrak{p}u_{i}^{2}\big)\,dv_g 
\leqslant 4n\,V_{rc}(M,m,\psi)^{\frac{2}{n}}
\left(\frac{\mathrm{vol}_g(M)}{k}\right)^{\!\frac{n-2}{n}}
+ \int_{\psi^{-1}(2\mathcal{A}_i)} {\mathfrak{p}}_{+}\,u_i^2\,dv_g \\[0.2cm]
\leqslant  4n\,V_{rc}(M,m,\psi)^{\frac{2}{n}}
\left(\frac{\mathrm{vol}_g(M)}{k}\right)^{\!\frac{n-2}{n}}
+ \bigl|\mathfrak{p}_{+}\bigr|_{L^{\frac{n}{2}}(M)}^{2}
\left( \int_{\psi^{-1}(2\mathcal{A}_i)} u_i^{\frac{2n}{n-2}}\,dv_g \right)^{\!\frac{n-2}{n}} \\[0.2cm]
\leqslant  \left(\frac{\mathrm{vol}_g(M)}{k}\right)^{\!\frac{n-2}{n}}
\!\left[\, 4n\,V_{rc}(M,m,\psi)^{\frac{2}{n}} 
+ \bigl|\mathfrak{p}_{+}\bigr|_{L^{\frac{n}{2}}(M)}^{2}\right] \\[0.2cm]
<  \left(\frac{\mathrm{vol}_g(M)}{k}\right)^{\!\frac{n-2}{n}}
9n\!\left[ V_{rc}(M,m,\psi)^{\frac{2}{n}}  
+ \bigl|\mathfrak{p}_{+}\bigr|_{L^{\frac{n}{2}}(M)}^{2}\right].
\end{multline*}

On the other hand, the right-hand side of~\eqref{ineq_neg_count} satisfies
\begin{eqnarray*}
\int_{\partial M} \mathfrak{q}\,u_i^2\,da_g 
&\geq& \frac{81}{625}\int_{\psi^{-1}(A_{i}\cap\mathbb{S}^{m-1})} \mathfrak{q}\,da_g  \\[0.2cm]
&\geqslant& \frac{81c}{2500 k} \int_{\partial M}\mathfrak{q}\, da_g .
\end{eqnarray*}

Combining these two estimates, we obtain
\begin{eqnarray*}
\frac{\displaystyle \int_{M}\big(|\nabla u_{i}|^{2}+\mathfrak{p}u_{i}^{2}\big)\,dv_g}
{\displaystyle \int_{\partial M} \mathfrak{q}\,u_i^2\,da_g}
&<& 
\frac{2500n\,\mathrm{vol}_{g}(M)^{\frac{n-2}{n}}}{\displaystyle 9c\int_{\partial M}\mathfrak{q}\, da_g}
\!\left[V_{rc}(M,m)^{\frac{2}{n}}
+\bigl|\mathfrak{p}_{+}\bigr|_{L^{\frac{n}{2}}(M)}^{2}\right]k^{\frac{2}{n}}\\[0.2cm]
&\leqslant& 1,
\end{eqnarray*}
where the last inequality follows directly from the definition of~$k$. 
This concludes the proof.
\end{proof}

An interesting consequence of Theorem \ref{thm:letterE} can be derived for surfaces $(M^2,g)$ that admit a proper free boundary minimal immersion $\psi \colon M \rightarrow S_+^m$. In this case, after a composition with a conformal diffeomorphism $\Phi \colon \mathbb S^m_+ \to \mathbb B^m$, it is possible to obtain an upper bound for the relative $m$-conformal volume $V_{rc}(M,m, \Phi\circ \psi)$ in terms of the volume of $M$ (see \Cref{AppendixB}).

In this setting, we have the following

\begin{corollary}\label{cons_E}
Let $(M^{2},g)$ be a proper free boundary minimal surface immersed in $\mathbb{S}^m_+$, and assume that $\lambda_1(\mathfrak L_g^D)>0$ and condition \eqref{cond_potential_func} is satisfied. Then the number of negative eigenvalues of the problem \eqref{steklov_schr_prob} satisfies
\begin{equation}
   \mathrm{Neg}(\mathfrak{p},\mathfrak{q}) 
   \geq 
   \frac{C}
   {\displaystyle
   4\vol(M)
   + \vert {\mathfrak{p}}_{+}\vert_{L^1(M)}
   }
   \int_{\partial M} \mathfrak{q}\,da_g ,
\end{equation}
where $C>0$ is a constant depending only on $m$.
\end{corollary}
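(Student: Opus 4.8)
The plan is to reduce the Corollary to the case $n=2$ of Theorem~\ref{thm:letterE}, applied to a conformal immersion of $(M,g)$ into the unit ball built from $\psi$, and then to replace the relative conformal volume by the area of $M$ using the estimate of \Cref{AppendixB}. To carry out the first step, fix a conformal diffeomorphism $\Phi\colon\mathbb{S}^m_+\to\mathbb{B}^m$ carrying the totally geodesic boundary $\partial\mathbb{S}^m_+$ onto $\partial\mathbb{B}^m=\mathbb{S}^{m-1}$. Since $\psi$ is isometric for the induced metric $g$, the pullback $(\Phi\circ\psi)^*g_{\mathrm{can}}$ is pointwise conformal to $g$, so $\Phi\circ\psi\colon(M,g)\to\mathbb{B}^m$ is a non-degenerate conformal immersion, and it is proper since $\Phi\circ\psi(\partial M)\subset\mathbb{S}^{m-1}$. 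Hence the relative $m$-conformal volume $V_{rc}(M,m)$ is defined, and, together with the standing hypotheses $\lambda_1(\mathfrak{L}_g^D)>0$ and \eqref{cond_potential_func}, we are in the setting of Theorem~\ref{thm:letterE}.

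Specializing \eqref{ineq_neg} to $n=2$, so that $\vol_g(M)^{\frac{2-n}{2}}=1$, both exponents $\tfrac{2}{n}$ and $\tfrac{n}{2}$ are equal to $1$, and $L^{n/2}=L^1$, we obtain
\begin{equation*}
\mathrm{Neg}(\mathfrak{p},\mathfrak{q})\ \geq\ \frac{C(m)}{V_{rc}(M,m)+\vert {\mathfrak{p}}_{+}\vert_{L^1(M)}}\int_{\partial M}\mathfrak{q}\,da_g .
\end{equation*}
It remains to bound $V_{rc}(M,m)$ by the volume of $M$. By definition $V_{rc}(M,m)\leq V_{rc}(M,m,\Phi\circ\psi)$, and Proposition~\ref{prop_est_vrc_to_vol} (proved in \Cref{AppendixB}) --- the free boundary analogue of the Li--Yau identity, which asserts that a closed minimal submanifold of a round sphere has conformal volume equal to its volume --- gives $V_{rc}(M,m,\Phi\circ\psi)\leq 4\vol(M)$. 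Substituting this bound, which only enlarges the denominator, yields
\begin{equation*}
\mathrm{Neg}(\mathfrak{p},\mathfrak{q})\ \geq\ \frac{C(m)}{4\vol(M)+\vert {\mathfrak{p}}_{+}\vert_{L^1(M)}}\int_{\partial M}\mathfrak{q}\,da_g ,
\end{equation*}
which is the asserted inequality with $C=C(m)>0$.

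Almost everything here is bookkeeping; the one piece with genuine content is the conformal volume estimate $V_{rc}(M,m,\Phi\circ\psi)\leq 4\vol(M)$, and I expect verifying it --- that is, controlling $\vol(\Phi\circ\psi(M))$ under the entire group $G$ of conformal diffeomorphisms of $\mathbb{B}^m$ --- to be the main obstacle. This is carried out in \Cref{AppendixB} by transporting the problem back to the hemisphere and exploiting the minimality of $\psi$; once that proposition is available, the Corollary follows immediately from the $n=2$ case of Theorem~\ref{thm:letterE}.
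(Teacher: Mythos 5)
Your argument is correct and is exactly the route the paper indicates: specialize Theorem~\ref{thm:letterE} to $n=2$ (so $\vol_g(M)^{(2-n)/2}=1$, all exponents collapse, and $L^{n/2}=L^1$), then replace $V_{rc}(M,m)$ by a multiple of $\vol(M)$ via \Cref{AppendixB}. The one small imprecision is attributing $V_{rc}(M,m,\Phi\circ\psi)\le 4\vol(M)$ to Proposition~\ref{prop_est_vrc_to_vol} alone: that proposition bounds the \emph{hemisphere}-relative conformal volume, $V_{rc}(M,m,\psi)\le\vol(M)$, and passing to the ball-relative conformal volume $V_{rc}(M,m,\Phi\circ\psi)$ requires the comparison of Proposition~\ref{prop_B1} (or the remark following it, which gives a factor $2^{n/2}=2$ when $n=2$); since you do describe the hemisphere-transport step in your closing paragraph, this is a citation issue rather than a gap.
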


\subsection{Negative eigenvalues for the conformal Dirichlet-to-Robin map}

The conformal Dirichlet-to-Robin problem \eqref{eing2} plays a fundamental role in the Yamabe problem on manifolds with boundary. In a way analogous to the Euler characteristic for compact surfaces with boundary, its eigenvalues provide important geometric obstructions. For instance, if 
$\sigma_1(B_{g}) < 0$, then there exists a conformally related metric that is either scalar-flat with negative constant mean curvature, or has negative scalar curvature with minimal boundary. Moreover, a key feature of this spectral problem is that the sign of the first eigenvalue is conformally invariant (see \cite{E,E1}). The following result establishes that this invariance extends to all higher eigenvalues. A proof is given in \Cref{Appendix}.
\begin{proposition}\label{invariant}
 Assume that first Dirichlet eigenvalue of $L_{g}$ is positive. The sign of $\sigma_k(B_g)$ is invariant under conformal changes of the metric within the conformal class $[g]$.
\end{proposition}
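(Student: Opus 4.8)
The plan is to reduce the claim about the sign of $\sigma_k(B_g)$ to the fact, recorded in Lemma~\ref{lem}, that the ``conformal energy'' quadratic form is preserved under the substitution $f \mapsto uf$ when $\bar g = u^{4/(n-2)}g$. Concretely, for $\bar g = u^{4/(n-2)}g$ I would consider the linear isomorphism $T_u \colon H^1(M,g) \to H^1(M,\bar g)$ given by $T_u(w) = w/u$, which is well defined and bounded with bounded inverse because $u$ is smooth and positive on the compact manifold $M$. The point is that $T_u$ intertwines the two Rayleigh quotients in a controlled way: applying Lemma~\ref{lem} with $f = w/u$ gives
\[
\int_M\!\bigl(|\nabla_{\bar g}(w/u)|^2 + c_n R_{\bar g}(w/u)^2\bigr)dv_{\bar g} + b_n\!\int_{\partial M}\! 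H_{\bar g}(w/u)^2\,da_{\bar g}
= \int_M\!\bigl(|\nabla_g w|^2 + c_n R_g w^2\bigr)dv_g + b_n\!\int_{\partial M}\! H_g w^2\,da_g,
\]
while the denominator transforms as $\int_{\partial M}(w/u)^2\,da_{\bar g} = \int_{\partial M} w^2\,da_g$, using $da_{\bar g} = u^{2(n-1)/(n-2)}da_g$. Hence $T_u$ carries the numerator and denominator of the $\bar g$-Rayleigh quotient exactly onto those of the $g$-Rayleigh quotient.

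Next I would feed this into the min–max characterization. Using the Courant–Fischer principle for $\sigma_k(M,B_{\bar g})$ over $k$-dimensional subspaces $E \subset H^1(M,\bar g)$ and over $E' = T_u^{-1}(E) \subset H^1(M,g)$ for $\sigma_k(M,B_g)$, the identities above show that for each such $E$ the $\bar g$-Rayleigh quotient of $f \in E$ equals the $g$-Rayleigh quotient of $uf \in E'$. Since $T_u$ is a bijection between $k$-dimensional subspaces of $H^1(M,g)$ and of $H^1(M,\bar g)$, and since $\mathrm{Tr}\,f \ne 0$ if and only if $\mathrm{Tr}(uf) \ne 0$, taking suprema over subspaces and then infima yields not merely the sign equivalence but in fact $\sigma_k(M,B_{\bar g}) = \sigma_k(M,B_g)$ whenever both spectra are defined via the $H^1$-quotient characterization. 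This is actually stronger than the stated proposition, and it immediately implies that $\sigma_k(B_g) < 0$, $=0$, or $>0$ is a conformal invariant.

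The one genuine subtlety — and the step I expect to require care — is the hypothesis that $\lambda_1(\mathfrak{L}_g^D) = \lambda_1(L_g^D) > 0$, which is what legitimizes using the $H^1(M)$-Rayleigh quotient characterization of $\sigma_k(M,B_g)$ rather than the trace-space one (see the discussion following Escobar's addendum in Section~\ref{sec_tec}). I would first verify that the positivity of $\lambda_1(L_g^D)$ is itself conformally invariant: the Dirichlet eigenvalue problem for $L_g$ admits the same $T_u$-substitution argument on $H^1_0(M)$ — indeed $T_u$ maps $H^1_0(M,g)$ onto $H^1_0(M,\bar g)$ since $u$ is bounded away from $0$ — and Lemma~\ref{lem} applied to $f = w/u$ with $w \in H^1_0$ (the boundary terms drop out) shows the Dirichlet energies agree while the $L^2$ denominators rescale by the fixed positive factor built from $u$, so $\lambda_1(L_{\bar g}^D) > 0 \iff \lambda_1(L_g^D) > 0$. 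This is precisely the adaptation of \cite[Proposition 3.1]{Say} alluded to in Remark~\ref{remark_adaptation}. Once this is in place, both $\sigma_k(M,B_g)$ and $\sigma_k(M,B_{\bar g})$ are characterized over $H^1$, the intertwining argument of the previous paragraph applies verbatim, and the proposition follows; if one only wishes the sign statement without the equality, it suffices to observe that the numerator of the Rayleigh quotient changes sign under $T_u$ simultaneously for matching subspaces, so the counting function $\mathcal{N}_0$ is unchanged.
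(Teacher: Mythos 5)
Your approach is essentially the same as the paper's: use Lemma~\ref{lem} to intertwine the two quadratic forms via $w \mapsto w/u$, feed this into a Courant--Fischer min--max, and observe that the sign of the Rayleigh quotient is determined by the numerator alone. The paper's Appendix~\ref{Appendix} does precisely this, recasting both $\sigma_k(M,B_g)$ and $\sigma_k(M,B_{\bar g})$ as min--max over the \emph{same} numerator $G_g(w)$ with different positive denominators. Your verification that $\lambda_1(L_g^D)>0$ is itself conformally invariant is also sound.

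However, there is an arithmetic slip with a non-trivial consequence. You claim $\int_{\partial M}(w/u)^2\,da_{\bar g} = \int_{\partial M} w^2\,da_g$, but with $da_{\bar g} = u^{2(n-1)/(n-2)}\,da_g$ one computes
\[
\frac{w^2}{u^2}\cdot u^{\frac{2(n-1)}{n-2}} = w^2\, u^{\frac{2(n-1)}{n-2}-2} = w^2\, u^{\frac{2}{n-2}},
\]
so in fact $\int_{\partial M}(w/u)^2\,da_{\bar g} = \int_{\partial M} w^2\,u^{\frac{2}{n-2}}\,da_g$, which is exactly the denominator in the paper's equation~\eqref{equation_minmax}. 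The $u^{2/(n-2)}$ factor does \emph{not} cancel, and consequently your claimed strengthening $\sigma_k(M,B_{\bar g}) = \sigma_k(M,B_g)$ is false --- indeed the eigenvalues cannot be conformally invariant since even rescaling $g$ by a constant changes them, which is why the paper works with the normalized quantities $\bar\sigma_k$. What survives is exactly the sign statement: both denominators are positive whenever $\operatorname{Tr}w\neq 0$, so the sign of each min--max value is governed by the shared numerator $G_g(w)$. Your closing remark (``it suffices to observe that the numerator\ldots changes sign\ldots so the counting function $\mathcal N_0$ is unchanged'') captures the correct and sufficient argument; the preceding equality of eigenvalues should be dropped.
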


This conformal invariance motivates the introduction of a fundamental invariant that captures the spectral structure of the conformal class. We define the \emph{negative eigenvalue counting function} of the conformal class $[g]$ as
$$
\operatorname{Neg}([g]) = \#\left\{k \in \mathbb{N} : \sigma_k(B_g) < 0\right\}.
$$
This integer-valued invariant provides a quantitative measure of the extent to which the conformal Dirichlet-to-Robin map fails to be positive definite within the given conformal class.

Since a counting function is analogous to the Morse index in variational problems, we offer insights into the stability properties of constant scalar curvature metrics with minimal boundary (or scalar flat with constant mean curvature on the boundary). Moreover, $\operatorname{Neg}([g])$ serves as an obstruction to certain conformal geometric properties, for instance, a conformal class admitting a metric with positive scalar curvature and minimal boundary must satisfy $\operatorname{Neg}([g]) = 0$.

We have the following consequence of Theorem~\ref{thm:letterE}. Although this result is essentially a corollary, we state it as a theorem due to its significance.

\begin{theorem}\label{conformal_neg}
Let $(M^{n},g)$ be a compact Riemannian manifold with boundary, of dimension $n\geq 3$. 
Assume that the mean curvature $H_g$ of $\partial M$ is a smooth nonpositive function. If $(M,g)$ admits a proper conformal map into the Euclidean unit ball $\mathbb{B}^{m}$, then, for every metric in $\mathcal{M}([g])$, the number of negative eigenvalues of the conformal Dirichlet-Robin map  satisfies
\begin{equation}\label{eq_comp}
    \operatorname{Neg}([g]) \geq C\frac{ \vol_{g}(M)^{\frac{2-n}{2}}}{\left[
   V_{rc}(M,m)^{\frac{2}{n}}
   + \vert (R_g)_{+}\vert_{L^\frac{n}{2}(M)}
   \right]^{\!\frac{n}{2}}} \left( \int_{\partial M} (-H_{g})da_{g} \right)^{n/2},
    \end{equation}
where \(C>0\) depends only on \(n\) and \(m\), and \(V_{\mathrm{rc}}(M,m)\) is the relative conformal volume of \(M\) of dimension \(m\).
\end{theorem}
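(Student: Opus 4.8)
The plan is to deduce Theorem~\ref{conformal_neg} directly from Theorem~\ref{thm:letterE} by choosing the potentials so that the Steklov-type problem \eqref{steklov_schr_prob} becomes the conformal Dirichlet-to-Robin problem \eqref{eing2}. Concretely, I would set $\mathfrak p = c_n R_g$ and $\mathfrak q = -b_n H_g$, so that $\mathfrak L_g = L_g$ and $\mathfrak B_g = B_g$. The hypothesis that $\bar g \in \mathcal{M}([g])$ is exactly the condition $\lambda_1^D(L_{\bar g})>0$, which by the conformal invariance of the sign of Dirichlet eigenvalues (Remark~\ref{remark_adaptation}) is equivalent to $\lambda_1(\mathfrak L_g^D)>0$; thus the first structural hypothesis of Theorem~\ref{thm:letterE} is met. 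The regularity condition \eqref{cond_potential_func} holds since $R_g \in C^\infty(M) \subset L^p(M)$ for all $p$, and $-b_n H_g \in C^\infty(\partial M)$; moreover the hypothesis that $H_g$ is smooth and nonpositive (and, implicitly, not identically zero, to make the right-hand side meaningful) gives $\mathfrak q = -b_n H_g \in L^\infty_+(\partial M)$ with $\mathfrak q \not\equiv 0$ whenever the integral on the right of \eqref{eq_comp} is positive.

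Next I would invoke Proposition~\ref{invariant}: the signs of all the eigenvalues $\sigma_k(B_{\bar g})$ are conformal invariants, so $\operatorname{Neg}([g])$ equals $\operatorname{Neg}(\mathfrak p,\mathfrak q)$ computed with respect to \emph{any} representative $\bar g \in \mathcal{M}([g])$, in particular with respect to $g$ itself if $g \in \mathcal{M}([g])$, and more to the point, computing with the representative $g$ makes the potentials $c_n R_g$ and $-b_n H_g$ the ones appearing in Theorem~\ref{thm:letterE}. Applying \eqref{ineq_neg} with these potentials gives
\begin{equation*}
\operatorname{Neg}([g]) = \operatorname{Neg}(c_n R_g, -b_n H_g) \geq \frac{C\,\vol_g(M)^{\frac{2-n}{2}}}{\left[V_{rc}(M,m)^{\frac{2}{n}} + |(c_n R_g)_+|_{L^{n/2}(M)}\right]^{\frac{n}{2}}}\left(\int_{\partial M} (-b_n H_g)\,da_g\right)^{\frac{n}{2}}.
\end{equation*}
Finally I would absorb the dimensional constants $c_n$ and $b_n$ into $C$: pulling $c_n$ out of the $L^{n/2}$-norm and $b_n$ out of the boundary integral only changes the constant by a factor depending on $n$, yielding \eqref{eq_comp} with a new $C = C(n,m)$. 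One should note $|(c_n R_g)_+|_{L^{n/2}} = c_n |(R_g)_+|_{L^{n/2}}$ since $c_n>0$.

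The only genuine points requiring care are: (i) verifying that $\mathcal{M}([g])$ non-empty forces the hypothesis $\lambda_1(\mathfrak L_g^D)>0$ — here one uses the conformal invariance of the sign of the first Dirichlet eigenvalue of the conformal Laplacian, as recalled in Remark~\ref{remark_adaptation} and proved in the Appendix (Proposition~\ref{sign}); and (ii) confirming that Proposition~\ref{invariant} legitimately lets us identify $\operatorname{Neg}([g])$ with the count produced by Theorem~\ref{thm:letterE} — that is, that the left-hand side of \eqref{ineq_neg} is a conformal invariant of $[g]$ when the potentials are $c_n R_g$ and $-b_n H_g$, which is precisely the content of that proposition. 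Neither of these is a serious obstacle; the proof is essentially a specialization, and I would keep it to a short paragraph stating the choice of potentials, citing Proposition~\ref{invariant} and Remark~\ref{remark_adaptation}, and quoting \eqref{ineq_neg}.
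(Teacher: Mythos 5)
Your proof is correct and follows the same route the paper intends: Theorem~\ref{conformal_neg} is stated there without a written argument, as an immediate consequence of Theorem~\ref{thm:letterE} applied with $\mathfrak{p} = c_n R_g$ and $\mathfrak{q} = -b_n H_g$, combined with Proposition~\ref{invariant} to identify the resulting count with $\operatorname{Neg}([g])$. The only minor imprecision is your last step: $c_n$ cannot literally be ``pulled out'' of the bracket $\bigl[V_{rc}(M,m)^{2/n} + c_n\vert (R_g)_+\vert_{L^{n/2}(M)}\bigr]^{n/2}$ as a multiplicative factor; what one actually uses is that $c_n = \tfrac{n-2}{4(n-1)} < 1$, so replacing $c_n\vert (R_g)_+\vert_{L^{n/2}(M)}$ by $\vert (R_g)_+\vert_{L^{n/2}(M)}$ only enlarges the denominator, yielding \eqref{eq_comp} directly, while the $b_n^{n/2}$ from the boundary integral is genuinely a multiplicative factor absorbed into $C(n,m)$.
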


\subsection{Morse index for type-II stationary hypersurfaces}\label{index_sec}
Let $(N^{n+1},g)$ be an oriented $(n+1)$-dimensional Riemannian manifold and $\Omega$ be a smooth compact domain in $N$. Let $x:\left(M^n, g\right) \rightarrow \Omega$ be a proper  isometric immersion of an orientable $n$-dimensional compact manifold $M$ with boundary $\partial M$ into $\Omega.$ The problem of finding  area-minimizing hypersurfaces among all such hypersurfaces in $\Omega$ which divides $\Omega$ into two disjoint domains $\Omega_1$ and $\Omega_2$ with prescribed wetting boundary area is called of Type-II partitioning problem (see Section \ref{sec_tec} for more details). Stationary hypersurfaces for the type-II partitioning problem in $\Omega$ are minimal hypersurfaces that intersect $\partial \Omega$ at a constant angle $\theta\in (0,\pi).$

For $M$ an immersed two-sided minimal hypersurface intersecting $\partial \Omega$ at a constant contact angle $\theta$, we consider the Jacobi operator  
$$J_{M}=\Delta_{M}+\text{Ric}^{\Omega}(\nu,\nu)+|A^{M}|^2,$$
where $\nu$ is the unit normal vector fields along $M$, $\text{Ric}^\Omega$ is the Ricci curvature tensor of $\Omega$  and $A^{M}$ is the second fundamental form of $M$. The associated boundary operator  
\[
B_{\partial M} = \frac{\partial}{\partial \eta} - \mathfrak{q}
\]
corresponds to an elliptic boundary condition for \(L_{M}\). Here,  
\[
\mathfrak{q} = \csc\theta A^{\partial \Omega}(\bar{\nu}, \bar{\nu}) + \cot \theta \, A^{M}(\eta, \eta),
\]
where \(\eta\) and \(\bar{\nu}\) denote the unit outward normal vectors to \(\partial M\) (with respect to \(M\)) and to \(\partial \Omega\), respectively. The tensor \(A^{\partial \Omega}\) is the second fundamental form of \(\partial \Omega\) in \(\Omega\).

The Type-II Morse index, denoted by $\operatorname{Ind}^{II}(M),$ is the number of negative eigenvalues of the problem
\begin{equation}\label{eigen_II}
\begin{cases}
\begin{array}{rl}
J_M \varphi = 0 & \text{in} \ M,  \\
B_{\partial M}\varphi = \sigma \varphi  &   \text{on} \ \partial M.
\end{array}
\end{cases}
\end{equation}
Equivalently, it is the dimension of the space of variations decreasing the area while preserving the wetting area. In this setting, we have the following application of Theorem \ref{thm:letterE}.

\begin{theorem}\label{morse_index}
Let $ M^n$ be a capillary two-sided  minimal hypersurface isometrically immersed in the unit ball $\mathbb{B}^{n+1}$. Assume that  the first Dirichlet eigenvalue of $-J_M$ is positive and $\csc\theta  + \cot \theta \,A^{M}(\eta, \eta)\geq 0$. Then
    \begin{equation}
   \operatorname{Ind}^{II}(M) \geq C\frac{\vol_g(M)^{\frac{2-n}{2}}}{V_{rc}(M,n+1)} \left( \int_{\partial M}[\csc\theta  + \cot \theta \,A^{M}(\eta, \eta)]\;da_g \right)^{n/2},
    \end{equation}
where $C=C(n)>0.$ \end{theorem}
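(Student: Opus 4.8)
The plan is to recognize the Jacobi--Steklov eigenvalue problem \eqref{eigen_II} as a particular instance of the general Steklov-type problem \eqref{steklov_schr_prob} and then to invoke Theorem~\ref{thm:letterE} directly. Comparing \eqref{eigen_II} with \eqref{steklov_schr_prob}, the Jacobi operator $J_M=\Delta_M+\text{Ric}^\Omega(\nu,\nu)+|A^M|^2$ corresponds to $\mathfrak L_g=-\Delta_g+\mathfrak p$ with interior potential
\[
\mathfrak p=-\bigl(\text{Ric}^\Omega(\nu,\nu)+|A^M|^2\bigr),
\]
while the boundary operator $B_{\partial M}=\partial/\partial\eta-\mathfrak q$ has boundary potential $\mathfrak q=\csc\theta\,A^{\partial\Omega}(\bar\nu,\bar\nu)+\cot\theta\,A^M(\eta,\eta)$. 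By the very definition of $\operatorname{Ind}^{II}(M)$ as the number of negative eigenvalues of \eqref{eigen_II}, we then have $\operatorname{Ind}^{II}(M)=\operatorname{Neg}(\mathfrak p,\mathfrak q)$.

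Next I would specialize to $\Omega=\mathbb{B}^{n+1}\subset\mathbb{R}^{n+1}$. Since the ambient manifold is Euclidean, $\text{Ric}^\Omega\equiv 0$, hence $\mathfrak p=-|A^M|^2\leq 0$; in particular $\mathfrak p_+\equiv 0$, so $|\mathfrak p_+|_{L^{n/2}(M)}=0$. As $M$ is a compact smooth immersed hypersurface, $|A^M|^2$ is bounded, so $\mathfrak p\in L^\infty(M)\subset L^p(M)$ for every $p>n/2$. The boundary sphere $\mathbb{S}^n=\partial\mathbb{B}^{n+1}$ is totally umbilic with $A^{\partial\Omega}(\bar\nu,\bar\nu)=1$ (with the orientation convention, adopted in the paper, for which $A^{\partial\Omega}$ coincides with the induced metric), so $\mathfrak q=\csc\theta+\cot\theta\,A^M(\eta,\eta)$, which is a nonnegative bounded function by hypothesis; thus $\mathfrak q\in L^\infty_+(\partial M)$. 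If $\mathfrak q\equiv 0$ the claimed inequality is trivial, so we may assume $\mathfrak q\not\equiv 0$, and then condition \eqref{cond_potential_func} holds. Finally, $\lambda_1(\mathfrak L_g^D)>0$ is exactly the hypothesis that the first Dirichlet eigenvalue of $-J_M$ be positive.

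It remains to fix the conformal data. The isometric immersion $x\colon M\to\mathbb{B}^{n+1}$ is in particular a non-degenerate conformal immersion, it is proper, and since $M$ is capillary one has $\partial M=M\cap\partial\Omega$, so $x(\partial M)\subset\partial\mathbb{B}^{n+1}=\mathbb{S}^n$. Therefore the relative conformal volume $V_{rc}(M,n+1)$ is defined (and finite, being bounded above by $V_{rc}(M,n+1,x)$). Applying Theorem~\ref{thm:letterE} with $m=n+1$ and the potentials $\mathfrak p,\mathfrak q$ above, and using $|\mathfrak p_+|_{L^{n/2}(M)}=0$, so that $\bigl[V_{rc}(M,n+1)^{2/n}+|\mathfrak p_+|_{L^{n/2}(M)}\bigr]^{n/2}=V_{rc}(M,n+1)$, together with $\operatorname{Neg}(\mathfrak p,\mathfrak q)=\operatorname{Ind}^{II}(M)$, yields
\[
\operatorname{Ind}^{II}(M)\geq C\,\frac{\vol_g(M)^{\frac{2-n}{2}}}{V_{rc}(M,n+1)}\left(\int_{\partial M}\bigl[\csc\theta+\cot\theta\,A^M(\eta,\eta)\bigr]\,da_g\right)^{n/2},
\]
with $C=C(n)>0$, which is the assertion. (When $n=2$ the factor $\vol_g(M)^{(2-n)/2}$ equals $1$, in line with Corollary~\ref{cons_E}.)

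The genuinely delicate point is not the estimate itself --- which is a direct specialization of Theorem~\ref{thm:letterE} --- but the bookkeeping of conventions: the signs of the second fundamental forms $A^M$ and $A^{\partial\Omega}$, the directions of the normals $\eta$ and $\bar\nu$, the umbilicity constant of the unit sphere, and, most importantly, the verification that the quadratic form attached by Theorem~\ref{thm:letterE} to the pair $(\mathfrak p,\mathfrak q)$ coincides with the index form of the Type-II partitioning problem, i.e.\ the second variation of area under wetting-area-preserving deformations. This identification is what legitimizes $\operatorname{Ind}^{II}(M)=\operatorname{Neg}(\mathfrak p,\mathfrak q)$ and is carried out in the Appendix (see Theorem~\ref{morse_index_thm}).
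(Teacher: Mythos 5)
Your proof is correct and follows exactly the route the paper intends: Theorem~\ref{morse_index} is a direct specialization of Theorem~\ref{thm:letterE}, obtained by reading off $\mathfrak p=-\bigl(\mathrm{Ric}^\Omega(\nu,\nu)+|A^M|^2\bigr)=-|A^M|^2\le 0$ (so $\mathfrak p_+\equiv 0$) and $\mathfrak q=\csc\theta\,A^{\partial\Omega}(\bar\nu,\bar\nu)+\cot\theta\,A^M(\eta,\eta)=\csc\theta+\cot\theta\,A^M(\eta,\eta)$ from the unit-ball specialization, using the isometric (hence conformal and proper) immersion $x\colon M\to\mathbb B^{n+1}$ with $m=n+1$, and observing that $\lambda_1(\mathfrak L_g^D)=\lambda_1(-J_M^D)$. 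The algebra $\bigl[V_{rc}(M,n+1)^{2/n}+0\bigr]^{n/2}=V_{rc}(M,n+1)$ and the definition $\operatorname{Ind}^{II}(M)=\operatorname{Neg}(\mathfrak p,\mathfrak q)$ then give the stated bound. One small misattribution in your closing paragraph: Theorem~\ref{morse_index_thm} in Appendix~C does not establish $\operatorname{Ind}^{II}(M)=\operatorname{Neg}(\mathfrak p,\mathfrak q)$; it concerns the \emph{Robin}-type Jacobi eigenvalue problem~\eqref{eigen_I} (eigenvalue in the interior, homogeneous boundary condition), which is a different spectral problem from the Jacobi--\emph{Steklov} problem~\eqref{eigen_II}. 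In the paper, $\operatorname{Ind}^{II}(M)$ is defined as the negative eigenvalue count of~\eqref{eigen_II}, so the identity $\operatorname{Ind}^{II}(M)=\operatorname{Neg}(\mathfrak p,\mathfrak q)$ is a matter of definition, and its equivalence with the geometric (second-variation) index is an ingredient the paper states but does not reprove.
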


\begin{remark}\label{remark_morseindex}
    It is worth noting that the proof of Theorem \ref{thm:letterE} remains valid for the following general  Jacobi-Robin type problem
\begin{equation}\label{steklov-schrodinger_problem3}
\begin{cases}
\begin{array}{rl}
\Delta_{M} u + \mathfrak{p}u = - \lambda u & \text{in} \ M,  \\[0.1cm]
\frac{\partial u}{\partial \eta}-\mathfrak{q}u = 0  &   \text{on} \ \partial M,
\end{array}
\end{cases}
\end{equation}
where $\mathfrak{p}$ and $\mathfrak{q}$ satisfy \eqref{cond_potential_func}. 
This formulation allows one to study the Morse index of compact capillary minimal hypersurfaces, see \Cref{AppendixC}.
\end{remark}

\appendix
\section{Conformal invariance of eigenvalue signs}\label{Appendix}

This appendix provides technical details regarding the conformal invariance of eigenvalue signs for the conformal Dirichlet-to-Robin operator.

Let us maintain the notations from Section~\ref{sec_tec}. It follows from Section \ref{sec_tec}  that the spectrum of Dirichlet-to-Robin map is discrete and can be listed 
in increasing order as
$$
\sigma_1(M,B_{\bar{g}}) < \sigma_2(M,B_{\bar{g}}) \leq \sigma_3(M,B_{\bar{g}}) \leq \cdots \leq \sigma_k(M,B_{\bar{g}}) \nearrow +\infty.
$$
We consider the case where $\bar{g}$ is conformal to $g$, i.e., $\bar{g} = u^{\frac{4}{n-2}}g$ for some smooth function $u>0$. Following the approach in \cite{Ho}, it is possible to give another min‑max characterization for the $k$-th eigenvalue $\sigma_k(M,B_{\bar{g}})$:
\begin{equation}\label{equation_minmax}
    \sigma_k(M,B_{\bar{g}}) = \inf_{\substack{E \subset \mathrm{dom}(B_{\bar{g}}) \\ \dim E = k}} \sup_{\substack{v \in E \\ \operatorname{Tr} v \neq 0}} \frac{G_{\bar g}(v)}{\displaystyle\int_{\partial M} v^2 \, u^{\frac{2}{n-2}} \, da_g},
\end{equation}
where 
$$
G_{\bar g}(v)
=
\int_M v\,L_g v\, dv_g
+
\int_{\partial M} v\,B_g v\, da_g.
$$
Since the proof follows the lines of the argument in \cite{Ho}, we only sketch the proof.

For $0<u \in C^{\infty}( M)$ and $v \in E$ with $u^{\frac{2}{n-2}} v \not \equiv 0$, define the functional
$$
F(u, v)\doteq \frac{G_{\bar g}(v)}{\displaystyle\int_{\partial M} u^{\frac{2}{n-2}} v^2 da_g}\left(\int_{\partial M} u^{\frac{2(n-1)}{n-2}} da_g\right)^{\frac{1}{n-1}}.
$$
For any nonzero $f \in C^{\infty}(\partial M)$, the conformal invariance of the operator $(L_g,B_g)$ implies that
$$
F^{\prime}(u, f)=\frac{G_{\bar g}(uf)}{\displaystyle\int_{\partial M} (uf)^2 u^{\frac{2}{n-2}}da_{g}}.
$$
Applying the min-max principle yields
$$
\sigma_k(M,B_{\bar{g}}) 
=\inf_{\substack{E \subset \mathrm{dom}(B_{\bar{g}}) \\ \dim E = k}} \sup_{\substack{v \in E \\ \operatorname{Tr} v \neq 0}}
      F'(u,f).
$$
Replacing $uf$ by $v$, we obtain \eqref{equation_minmax}.

The following result, inspired by \cite[Proposition 3.1]{Say}, establishes the conformal invariance of eigenvalue signs.

\begin{proposition}\label{sign}
 Assume that first Dirichlet eigenvalue of $L_{g}$ is positive. The sign of $\sigma_k(M,B_g)$ is invariant under conformal changes of the metric within the conformal class $[g]$.
\end{proposition}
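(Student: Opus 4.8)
The plan is to exploit the variational characterization \eqref{equation_minmax} together with the conformal transformation law \eqref{confchange} to show that the sign of each $\sigma_k(M,B_{\bar g})$ is determined only by data of the operator $(L_g,B_g)$ and not by the conformal factor $u$. First I would fix a metric $\bar g = u^{\frac{4}{n-2}}g$ with $u>0$ smooth and, since $\lambda_1(L_g^D)>0$, recall from Section~\ref{sec_tec} and Remark~\ref{remark_adaptation} that the sign of $\lambda_1^D$ is conformally invariant, so the Dirichlet-to-Robin operator $B_{\bar g}$ is well defined with discrete spectrum. The key observation is that the map $v \mapsto uv$ is a bijection from $\mathrm{dom}(B_g)$ onto $\mathrm{dom}(B_{\bar g})$ (this uses that $0$ is not a Dirichlet eigenvalue of $L_{\bar g}$, which holds by the sign invariance of $\lambda_1^D$), and under this substitution the numerator $G_{\bar g}$ transforms according to the identity appearing in the sketch preceding the proposition, namely $G_{\bar g}(v) = G_g(uv)$, while the denominator in \eqref{equation_minmax} becomes $\int_{\partial M}(uv)^2\,da_g$ up to the positive weight being absorbed.

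The second step is then purely formal: because $u>0$, the substitution $v \leftrightarrow uv$ sets up an order-preserving bijection between $k$-dimensional subspaces $E \subset \mathrm{dom}(B_{\bar g})$ and $k$-dimensional subspaces $uE \subset \mathrm{dom}(B_g)$, and it sends the Rayleigh quotient $G_{\bar g}(v)/\int_{\partial M} v^2 u^{\frac{2}{n-2}}\,da_g$ to the Rayleigh quotient $G_g(w)/\int_{\partial M} w^2\,da_g$ with $w = uv$. Taking the inf-sup over both sides shows not that $\sigma_k(M,B_{\bar g}) = \sigma_k(M,B_g)$ — indeed the normalization in \eqref{normalized_eig_conf} already shows the eigenvalues themselves are not conformally invariant — but rather that $\sigma_k(M,B_{\bar g})$ and $\sigma_k(M,B_g)$ are simultaneously negative, zero, or positive, since the numerators agree and the denominators are both strictly positive. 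More precisely, $\sigma_k(M,B_{\bar g}) < 0$ if and only if there is a $k$-dimensional subspace on which $G_{\bar g} < 0$, if and only if there is a $k$-dimensional subspace on which $G_g < 0$ (via the correspondence $v \leftrightarrow uv$), if and only if $\sigma_k(M,B_g) < 0$; the same equivalences hold with $<$ replaced by $\leq$, giving the statement for the sign zero as well.

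I expect the main obstacle to be justifying the domain bijection and the transformation identity $G_{\bar g}(v) = G_g(uv)$ rigorously at the level of the form domains $H^{1/2}(\partial M)$ rather than just for smooth functions — in particular checking that $v \in \mathrm{dom}(B_{\bar g})$ corresponds under multiplication by $u$ to an element of $\mathrm{dom}(B_g)$, that the weak formulation \eqref{weak_form_Lg} is preserved (the harmonic-type extension $\widehat{v}$ with respect to $L_{\bar g}$ maps to $u\,\widehat{v}$, which is the corresponding extension for $L_g$), and that all boundary integrals are finite under the regularity hypotheses. Once one restricts attention to the dense subspace $C^\infty(M)$, where \eqref{confchange} and Lemma~\ref{lem} apply directly, the identity is immediate; the density argument together with the closedness of the quadratic forms then upgrades the sign equivalence to the full spectrum. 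Since the excerpt explicitly says the proof follows \cite[Proposition 3.1]{Say} and only a sketch is expected, I would present the computation on $C^\infty(M)$ in detail, invoke \eqref{equation_minmax}, and refer to \cite{Say,Ho} for the functional-analytic details of the domain identification.
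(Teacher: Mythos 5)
Your proposal is correct and is essentially the same argument as the paper's: both rely on the min-max characterization \eqref{equation_minmax}, the conformal transformation of the quadratic form via the substitution $v \leftrightarrow uv$, and the observation that the numerator is conformally invariant while the denominator is merely reweighted by a strictly positive factor. The only cosmetic difference is that the paper phrases the conclusion as a proof by contradiction (assuming $\sigma_i(M,B_{\bar g})=0$ and $\sigma_i(M,B_g)>0$), whereas you phrase it directly as a chain of sign equivalences — the underlying logic is identical.
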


\begin{proof}
 Let $\bar{g} = u^{\frac{4}{n-2}} g$ be a conformal metric with $u> 0$.  Without loss of generality,  assume that $\sigma_i(M,B_{\bar g})=0$ and $\sigma_i(M,B_g)>0$ (the remaining cases may be treated similarly). Then, the variational characterization \eqref{equation_minmax} yields
$$
\sigma_i(M,B_{\bar g})=\inf _{u_1, \ldots, u_i} \sup _{\sigma_1, \ldots, \sigma_i} \frac{G_g(w)}{\displaystyle \int_{\partial M} w^2\, u^{\frac{4}{n-2}}\, da_g},
$$
and
$$
\sigma_i(M,B_g)=\inf _{u_1, \ldots, u_i} \sup _{\sigma_1, \ldots, \sigma_i} \frac{G_g(w)}{\displaystyle \int_{\partial M} w^2\, da_g},
$$
where $w=\sum_{j=1}^i \sigma_j u_j.$

Let $v_1, \ldots, v_i$ be functions realizing  $\sigma_i(M,B_{\bar g}) = 0$, and set
$w=\sum_{j=1}^i \sigma_j v_j$. Then
$
\sup_{\sigma_1,\dots,\sigma_i} G_g(w)=0.
$
Therefore,
$$
\sigma_i(M,B_g)
\le
\sup_{\sigma_1,\dots,\sigma_i}
\frac{G_g(w)}{\displaystyle \int_{\partial M} w^2\, da_g}
=0,
$$
which contradicts the assumption that $\sigma_i(M,B_g)>0$.
\end{proof}

\section{Relative conformal volume and free boundary minimal surfaces.}\label{AppendixB}

It is well known that the relative $m$-conformal volume of a conformal map $\psi \colon M^n \to \mathbb{B}^m$ is bounded below by the volume of the $n$-dimensional unit ball $\mathbb{B}^n$ (see \cite[Remark 5.8]{fs}). In this appendix, we derive an upper bound for the relative conformal volume for surfaces that admit a free boundary minimal immersion into the hemisphere $\mathbb{S}_+^m$. %This bound will be useful in Section \ref{negative eigen}.

We start establishing a comparison result between the relative $m$-conformal volume associated with the maps $\psi_B:M\to\mathbb{B}^m$ and $\psi_S:M\to\mathbb{S}_+^m$.

\begin{proposition}\label{prop_B1}
Let $M^n$ be a compact Riemannian manifold with boundary, and let
$\psi_B \colon M \to \mathbb{B}^m$ be a proper conformal immersion into the Euclidean unit ball
$\mathbb{B}^m$.  
Fix a conformal diffeomorphism $\Phi \colon \mathbb{S}^m_+ \to \mathbb{B}^m$, and define the conformal map
$
\psi_S \doteq \Phi^{-1} \circ \psi_B \colon M \to \mathbb{S}^m_+.
$
Then 
$$
C_1 \cdot V_{\mathrm{rc}}(M, m, \psi_S)\leq V_{\mathrm{rc}}(M, m, \psi_B) \le C_2 \cdot V_{\mathrm{rc}}(M, m, \psi_S),
$$ 
with $C_1$ and $C_2$ depending only on $m$, $n$, and $\Phi$.
\end{proposition}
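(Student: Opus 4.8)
The plan is to use that $\Phi$ conjugates the group of conformal diffeomorphisms of $\mathbb{B}^m$ onto that of $\mathbb{S}^m_+$, so that the two suprema defining $V_{rc}(M,m,\psi_B)$ and $V_{rc}(M,m,\psi_S)$ are indexed by the same family of maps and differ only through the conformal factor of $\Phi$, which is bounded above and below by positive constants. Concretely, write $G$ for the conformal diffeomorphism group of $\mathbb{B}^m$; then $h\mapsto\Phi^{-1}\circ h\circ\Phi$ is a bijection of $G$ onto the conformal diffeomorphism group of $\mathbb{S}^m_+$, with inverse $\tilde h\mapsto\Phi\circ\tilde h\circ\Phi^{-1}$. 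Since $\psi_B=\Phi\circ\psi_S$, for any $h\in G$ we have $h(\psi_B(M))=\Phi\big(\tilde h(\psi_S(M))\big)$ with $\tilde h\doteq\Phi^{-1}\circ h\circ\Phi$. (Note that $\psi_S$ is again proper, as $\Phi^{-1}$ maps $\partial\mathbb{B}^m$ onto $\partial\mathbb{S}^m_+$.)

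Next I would compare the induced volumes. Let $g_0$ denote the Euclidean metric on $\mathbb{B}^m$, $g_{\mathbb{S}}$ the round metric on $\mathbb{S}^m_+$, and write $\Phi^*g_0=\rho^2 g_{\mathbb{S}}$ for a positive function $\rho$. Pulling back to $M$ through $\Phi\circ\tilde h\circ\psi_S$ gives
\[
(\Phi\circ\tilde h\circ\psi_S)^*g_0 \;=\; (\tilde h\circ\psi_S)^*(\rho^2 g_{\mathbb{S}}) \;=\; \big(\rho\circ\tilde h\circ\psi_S\big)^2\,(\tilde h\circ\psi_S)^*g_{\mathbb{S}},
\]
so the associated volume densities on $M$ satisfy $dv_{(\Phi\circ\tilde h\circ\psi_S)^*g_0}=(\rho\circ\tilde h\circ\psi_S)^n\,dv_{(\tilde h\circ\psi_S)^*g_{\mathbb{S}}}$ pointwise. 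Integrating over $M$,
\[
\vol_{g_0}\big(h(\psi_B(M))\big)=\int_M (\rho\circ\tilde h\circ\psi_S)^n\,dv_{(\tilde h\circ\psi_S)^*g_{\mathbb{S}}}.
\]

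The only step that requires more than bookkeeping is the two-sided bound on $\rho$. Here I would use that a conformal diffeomorphism between the conformally flat domains $\mathbb{S}^m_+$ and $\mathbb{B}^m$ is the restriction of a Möbius transformation of $\mathbb{S}^m$ — by Liouville's theorem for $m\geq 3$, and by the classical description of biholomorphisms of the disk for $m=2$ — hence extends to a smooth conformal diffeomorphism of the compact closures $\overline{\mathbb{S}^m_+}\to\overline{\mathbb{B}^m}$. Consequently $\rho$ extends continuously to $\overline{\mathbb{S}^m_+}$ and attains there a positive minimum $c>0$ and a finite maximum $C<\infty$, both depending only on $m$ and $\Phi$. (Alternatively, one may simply take $\Phi$ to be stereographic projection of the hemisphere onto the ball, for which $\rho$ is explicit and manifestly bounded.)

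Combining, for every $h\in G$,
\[
c^n\,\vol_{g_{\mathbb{S}}}\big(\tilde h(\psi_S(M))\big)\;\leq\;\vol_{g_0}\big(h(\psi_B(M))\big)\;\leq\;C^n\,\vol_{g_{\mathbb{S}}}\big(\tilde h(\psi_S(M))\big).
\]
Taking the supremum over $h\in G$ and using that $h\mapsto\tilde h$ is a bijection onto the conformal group of $\mathbb{S}^m_+$ yields
\[
c^n\,V_{rc}(M,m,\psi_S)\;\leq\;V_{rc}(M,m,\psi_B)\;\leq\;C^n\,V_{rc}(M,m,\psi_S),
\]
so the statement holds with $C_1=c^n$ and $C_2=C^n$, depending only on $m$, $n$, and $\Phi$. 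The main (mild) obstacle is precisely ensuring $\rho$ stays away from $0$ and $\infty$ up to the equator; once this is in place, the rest is the change-of-variables computation above.
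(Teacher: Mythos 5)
Your proposal is correct and follows essentially the same route as the paper: conjugate the conformal group of $\mathbb{B}^m$ onto that of $\mathbb{S}^m_+$ via $\Phi$, identify $h(\psi_B(M))=\Phi(\tilde h(\psi_S(M)))$, and compare volumes through the conformal factor of $\Phi$. The only (welcome) addition is that you explicitly justify the two-sided bound on the conformal factor via Liouville's theorem / Möbius extension, whereas the paper simply records the bound $a\le e^{2u}\le b$ without elaboration.
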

\begin{proof}
Let $u \in C^\infty(\mathbb{S}^m_+)$ such that 
$\Phi^* g_{\mathbb{B}^m} = e^{2u} \, g_{\mathbb{S}^m_+}$ such that $a \le e^{2u} \le b$, for some positive constants $a \le b$. For any $n$-dimensional submanifold $A \subset \mathbb{S}^m_+$, we have \begin{equation}\label{volume}
a^{n/2} \, \mathrm{vol}_{g_{\mathbb{S}^m_+}}(A) \le \mathrm{vol}_{g_{\mathbb{B}^m}}(\Phi(A)) \le b^{n/2} \, \mathrm{vol}_{g_{\mathbb{S}^m_+}}(A).
\end{equation}

Let $G(N)$ denote the conformal group of a Riemannian manifold $N$. Given $\phi \in G(\mathbb{B}^m)$, we define 
$
\Psi \doteq \Phi^{-1} \circ \phi \circ \Phi \in G(\mathbb{S}^m_+).
$
Since $\psi_B = \Phi \circ \psi_S$, we have 
$$
\phi \circ \psi_B = \phi \circ \Phi \circ \psi_S = \Phi \circ \Psi \circ \psi_S,$$
and hence 
$\phi \circ \psi_B(M) = \Phi( \Psi \circ \psi_S(M) ).$
Applying inequality \eqref{volume}, we obtain
$$a^{n/2} \, \mathrm{vol}_{g_{\mathbb{S}^m_+}}(\Psi \circ \psi_S(M))\le\mathrm{vol}_{g_{\mathbb{B}^m}}(\phi \circ \psi_B(M)) \le b^{n/2} \, \mathrm{vol}_{g_{\mathbb{S}^m_+}}(\Psi \circ \psi_S(M)).
$$
Taking the supremum over all $\phi \in G(\mathbb{B}^m)$, and noting that $\phi \mapsto \Psi$ defines a bijection between $G(\mathbb{B}^m)$ and $G(\mathbb{S}^m_+)$, we conclude that 
$$
 a^{n/2} \cdot V_{\mathrm{rc}}(M, m, \psi_S)\leq V_{\mathrm{rc}}(M, m, \psi_B) \le b^{n/2} \cdot V_{\mathrm{rc}}(M, m, \psi_S).
$$ 
\end{proof}

\begin{remark}
For the standard stereographic projection from the south pole, the constants appearing in Proposition \ref{prop_B1} can be easily computed as $a =1$ and $b=2$. In particular, 
$$V_{\mathrm{rc}}(M, m, \psi_S) \leq V_{\mathrm{rc}}(M, m, \psi_B) \le 2^{\frac{n}{2}} \, V_{\mathrm{rc}}(M, m, \psi_S).$$
\end{remark}

Let $P^m$ be an orientable Riemannian manifold with boundary and let $M$ be a compact orientable surface with boundary properly immersed in $P$. We recall that 
\begin{equation*}\label{invar_for}
  \int_{M} (H^2-4K+4K_s) \;dv_g
\end{equation*}
is invariant under conformal changes of the metric on $P^m$, where $K_s$ is the sectional curvature of $P$  evaluated on the tangent plane to $M$. Indeed,  it is straightforward to check that $(k_1 - k_2)^2 dv_g$ is invariant under conformal changes of the metric on $P$, where $k_1$ and $k_2$ denote the principal curvatures of $M$. Integrating over $M$ and using Gauss equation and Gauss-Bonnet formula we prove the claim.

\begin{proposition} \label{prop_est_vrc_to_vol}
Let $M$ be a free boundary minimal surface isometrically immersed in $\mathbb S^m_+$ by a proper map $\psi \colon M \rightarrow \mathbb S^m_+$. Then
$$
V_{rc}(M,m,\psi)\leq \vol(M).
$$
 \end{proposition}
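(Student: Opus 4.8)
The plan is to exploit the conformal invariance of the Willmore-type integral $\int_M (H^2 - 4K + 4K_s)\,dv_g$ recorded just above the statement, together with the fact that a free boundary minimal surface in $\mathbb{S}^m_+$ has $H\equiv 0$ and sits in a space of constant curvature $K_s \equiv 1$. For such $\psi$ this integral reduces to $\int_M (-4K + 4)\,dv_g = 4\vol(M) - 4\int_M K\,dv_g$. The whole point is that this quantity is unchanged if we replace the round metric on $\mathbb{S}^m_+$ by any conformally equivalent metric, in particular by the flat metric on $\mathbb{B}^m$ pulled back through a conformal diffeomorphism $\Phi\colon \mathbb{S}^m_+ \to \mathbb{B}^m$, and also if we further precompose with any $f \in G(\mathbb{B}^m)$, since the conformal group acts by conformal changes of the ambient flat metric.

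First I would fix a conformal diffeomorphism $\Phi\colon \mathbb{S}^m_+ \to \mathbb{B}^m$ and, for $f \in G(\mathbb{B}^m)$, consider the immersed surface $M_f \doteq (f\circ\Phi\circ\psi)(M) \subset \mathbb{B}^m$ with the induced flat metric $g_f = (f\circ\Phi\circ\psi)^* g_{\mathrm{can}}$. Writing $H_f$, $K_f$ for the mean and Gauss curvatures of $M_f$ in $\mathbb{B}^m$ (here $K_s \equiv 0$ since $\mathbb{B}^m$ is flat), conformal invariance of the Willmore integrand gives
\begin{equation*}
\int_{M}\bigl(H_f^2 - 4K_f\bigr)\,dv_{g_f} \;=\; \int_M \bigl(0 - 4K + 4\bigr)\,dv_g \;=\; 4\vol(M) - 4\int_M K\,dv_g.
\end{equation*}
By Gauss–Bonnet applied on $M$ (with its free-boundary, hence geodesic-free but with boundary-curvature correction) one has $\int_M K\,dv_g + \int_{\partial M}\kappa_g\,ds = 2\pi\chi(M)$; for a free boundary minimal surface in $\mathbb{S}^m_+$ the boundary meets $\partial\mathbb{S}^m_+$ orthogonally and is a geodesic of $\partial\mathbb{S}^m_+$, and the geodesic curvature $\kappa_g$ of $\partial M$ in $M$ vanishes, so $\int_M K\,dv_g = 2\pi\chi(M) \le 2\pi$ (the last inequality using $\chi(M)\le 1$ for a connected surface with nonempty boundary — or more simply $\chi(M) \le$ number of components). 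Either way $\int_M K\,dv_g \ge 0$ is not what I want; rather I want the reverse bound $\int_M K \, dv_g \geq 0$ to be replaced by an appropriate sign so that the Willmore integral bounds $4\vol(M)$. Since $H_f^2 \ge 0$, the displayed identity yields
\begin{equation*}
4\vol(M_f) \;=\; 4\int_M dv_{g_f} \;\le\; \int_M H_f^2 \, dv_{g_f} + 4\,\mathrm{vol}_{g_f}(M) \quad\text{is not immediate};
\end{equation*}
instead the correct route is: $\int_M H_f^2\,dv_{g_f} = 4\vol(M) - 4\int_M K\,dv_g + 4\int_M K_f\,dv_{g_f}$, and a second application of Gauss–Bonnet to $M_f$ (same boundary behaviour is preserved under the conformal map only up to the geodesic-curvature term, which must be tracked) gives $\int_M K_f\,dv_{g_f} + \int_{\partial M}\kappa_{g_f}\,ds_{g_f} = 2\pi\chi(M) = \int_M K\,dv_g + \int_{\partial M}\kappa_g\,ds$. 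Combining, $\int_M H_f^2\,dv_{g_f} = 4\vol(M) + 4\bigl(\int_{\partial M}\kappa_g\,ds - \int_{\partial M}\kappa_{g_f}\,ds_{g_f}\bigr)$, and since $\psi$ is free boundary minimal in $\mathbb{S}^m_+$ its boundary geodesic curvature in $M$ vanishes, $\kappa_g \equiv 0$, so $\int_M H_f^2\,dv_{g_f} = 4\vol(M) - 4\int_{\partial M}\kappa_{g_f}\,ds_{g_f}$. One checks $\int_{\partial M}\kappa_{g_f}\,ds_{g_f} \ge 0$ (the boundary of $f\circ\Phi\circ\psi(M)$ lies in the round sphere $\partial\mathbb{B}^m = \mathbb{S}^{m-1}$, a convex hypersurface, forcing the geodesic curvature of $\partial M$ in $M_f$ to be nonnegative), hence $\mathrm{vol}_{g_f}(M) = \vol(M_f) \le \vol(M) - \int_M H_f^2\,dv_{g_f}/4 \le \vol(M)$. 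Taking the supremum over $f \in G(\mathbb{B}^m)$ gives $V_{rc}(M,m,\psi) \le \vol(M)$, as claimed.

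The main obstacle is the bookkeeping of boundary terms in Gauss–Bonnet under the conformal change and under the action of $G(\mathbb{B}^m)$: one must be careful that (i) free-boundary minimality in $\mathbb{S}^m_+$ really does force $\kappa_g\equiv 0$ for $\partial M$ as a curve in $M$ (this follows since $\partial M \subset \partial\mathbb{S}^m_+ = \mathbb{S}^{m-1}$, which is totally geodesic in $\mathbb{S}^m_+$, and $M$ meets it orthogonally, so $\partial M$ is a geodesic of both), and (ii) after mapping into $\mathbb{B}^m$, the image boundary lies in the round $\mathbb{S}^{m-1}$ so the relevant boundary integral has a sign. Establishing the correct sign $\int_{\partial M}\kappa_{g_f}\,ds_{g_f}\ge 0$ — equivalently that the image surface is "convex toward the boundary" in the appropriate sense — is the delicate point; it should follow from the fact that $\partial\mathbb{B}^m$ is a round sphere together with the orthogonality of the intersection, but I would verify it by writing the second-fundamental-form decomposition of $\partial M \hookrightarrow M_f \hookrightarrow \mathbb{B}^m$ explicitly.
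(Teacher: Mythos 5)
Your route through the flat ball loses exactly the term that the argument needs. The Willmore-type integrand $\displaystyle(H^2 - 4K + 4K_s)\,dv$ is indeed conformally invariant, but once you pass to $(\mathbb{B}^m, g_{\mathrm{can}})$ you have $K_s \equiv 0$, so the right-hand side of your conformal-invariance identity has no $4\vol(M_f)$ contribution at all. After applying Gauss--Bonnet to both $M$ and $M_f$, what you actually derive is
\[
\int_M H_f^2 \, dv_{g_f} \;=\; 4\vol(M) - 4\int_{\partial M}\kappa_{g_f}\,ds_{g_f},
\]
which, using $\int_{\partial M}\kappa_{g_f}\,ds_{g_f}\geq 0$, bounds the Willmore energy $\int_M H_f^2\,dv_{g_f}$ by $4\vol(M)$ --- not the area $\vol(M_f)$. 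The final inequality you write, $\vol(M_f)\le \vol(M) - \tfrac{1}{4}\int_M H_f^2\,dv_{g_f}$, does not follow from any displayed identity; it would require $\vol(M_f)\le\int_{\partial M}\kappa_{g_f}\,ds_{g_f}$, which is false in general (for a free boundary surface in $\mathbb{B}^m$ one has $\kappa_{g_f}\equiv 1$, so that integral is just the boundary length).

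The paper's proof avoids this by never leaving $\mathbb{S}^m_+$: with $f\in G(\mathbb{S}^m_+)$, both $M$ and $f(M)$ sit in the round hemisphere where $K_s\equiv 1$, so the conformal-invariance identity reads
\[
\int_M H^2\,dv + 4\int_{\partial M}\kappa\,ds + 4\vol(M) \;=\; \int_{f(M)}\tilde H^2\,d\tilde v + 4\int_{\partial f(M)}\tilde\kappa\,d\tilde a + 4\vol(f(M)),
\]
after cancelling the Gauss--Bonnet topological term on both sides. The equator $\partial\mathbb{S}^m_+$ is totally geodesic, and $f$ preserves it and preserves the orthogonal intersection, so $\kappa = \tilde\kappa = 0$; together with $H=0$ and $\tilde H^2\ge 0$ this gives $\vol(f(M))\le\vol(M)$ directly. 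There is also a definitional mismatch in your approach worth noting: $V_{rc}(M,m,\psi)$ for $\psi\colon M\to\mathbb{S}^m_+$ is the supremum of volumes over $f\in G(\mathbb{S}^m_+)$ taken with respect to the round metric; computing flat volumes of $f\circ\Phi\circ\psi(M)$ for $f\in G(\mathbb{B}^m)$ bounds $V_{rc}(M,m,\Phi\circ\psi)$ instead, which by \Cref{prop_B1} is only comparable to, not equal to, the quantity in the statement. If you want to rescue a ball-based argument you must either carry the conformal factor of $\Phi$ through (getting an extra multiplicative constant) or, better, simply stay in the hemisphere as the paper does so that $K_s=1$ supplies the volume term on both sides.
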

\begin{proof}
For any $f \in G(\mathbb{S}_+^m)$, we have 
$$
      \int_{M} (H^2-4K+4) \;dv
      =\int_{f(M )} (\tilde{H}^2-4\tilde{K}+4) \;d\tilde{v},
$$
where $d\tilde{v}$ denotes the induced area element on $f(M )$, and 
$\tilde{K}$ and $\tilde{H}$ denote the Gauss and mean curvatures of $f(M ).$ Then by the Gauss-Bonnet Theorem, and  since $\chi(M )=\chi(f(M))$, we obtain
\begin{align*} 
     \int_{M} H^2\;dv  +4\int _{\partial M } \kappa \;da +4\mbox{vol} (M)&=\; \int_{f(M )} \tilde{H}^2 \; d\tilde{v} 
+ 4 \int_{\partial f(M )} \tilde{\kappa} \;d\tilde{a} \notag +4\mbox{vol} (f(M))\\
& \geq \; 4\int_{\partial f(M )} \tilde{\kappa} \; d\tilde{a}+4\mbox{vol} ( f(M)),
\end{align*}
where $\kappa$ and $\tilde{\kappa}$ denote the geodesic curvature of $\partial M$ and $\partial f(M)$, respectively.  
Since $M$ and $f(M)$ are free boundary, we have $\tilde \kappa=\kappa=0$. 
Therefore, we conclude that
\begin{equation*}\label{volcomparison}
   V_{rc}(M ,m,\psi) \leq \vol(f(M)) \leq \vol(M). 
\end{equation*}
\end{proof}

\section{Index estimates for stationary capillary minimal hypersurfaces.}\label{AppendixC}

As mentioned in Remark \ref{remark_morseindex}, our results allow one to study the Morse index of compact capillary minimal hypersurfaces through the spectrum of their Jacobi operator with Robin boundary conditions,
\begin{equation}\label{eigen_I}
\begin{cases}
\begin{array}{rl}
J_M \varphi = -\lambda\varphi & \text{in} \ M,  \\
B_{\partial M}\varphi = 0  &   \text{on} \ \partial M,
\end{array}
\end{cases}
\end{equation}
where we adopt the notation introduced in Section \ref{index_sec}.

Index estimates for compact capillary hypersurfaces in Riemannian manifolds have been investigated in the works of Hong and Saturnino \cite{Hong}, and Longa \cite{Longa}, and references therein.    Related to the type-I partitioning problem, in the case of free boundary minimal surfaces (that is, when the contact angle is $\pi/2$), index bounds in terms of the surface topology have been recently obtained by Sargent \cite{Sa} and by Ambrozio, Carlotto, and Sharp \cite{ACS}, Aiex and Hong as well as by Cavalcante and de Oliveira \cite{CO0,CO}.

The above setting leads to lower bounds for the index of capillary minimal surfaces in the unit ball $\mathbb{B}^{n+1}$.

\begin{theorem}\label{morse_index_thm}
Let $ M^n$ be a capillary  two-sided minimal hypersurface properly  immersed in the unit ball $\mathbb{B}^{n+1}$. If $\csc\theta  + \cot \theta \,A^{M}(\eta, \eta)\geq 0$, then
$$\mbox{Ind}(M) \geq C\frac{\vol_g(M)^{\frac{2-n}{2}}}{V_{rc}(M,n+1)} \left( \int_{\partial M}\csc\theta  + \cot \theta \,A^{M}(\eta, \eta)\;da_g \right)^{n/2},
$$
where $C=C(n)>0.$
\end{theorem}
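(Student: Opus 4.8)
The plan is to deduce Theorem~\ref{morse_index_thm} directly from Theorem~\ref{thm:letterE} by identifying the Jacobi--Robin eigenvalue problem \eqref{eigen_I} with a Steklov-type problem of the form \eqref{steklov_schr_prob} after moving the Robin term to the boundary. First I would recall from Remark~\ref{remark_morseindex} that the conclusion of Theorem~\ref{thm:letterE} applies verbatim to the problem
\begin{equation*}
\begin{cases}
\Delta_{M} u + \mathfrak{p}u = -\lambda u & \text{in } M,\\
\dfrac{\partial u}{\partial \eta} - \mathfrak{q}u = 0 & \text{on } \partial M,
\end{cases}
\end{equation*}
provided $\mathfrak p$ and $\mathfrak q$ satisfy \eqref{cond_potential_func}. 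For a capillary two-sided minimal hypersurface in $\mathbb B^{n+1}$ the Jacobi operator is $J_M = \Delta_M + (\mathrm{Ric}^{\mathbb B^{n+1}}(\nu,\nu) + |A^M|^2)$, so I set $\mathfrak p = \mathrm{Ric}^{\mathbb B^{n+1}}(\nu,\nu) + |A^M|^2$ and $\mathfrak q = \csc\theta\, A^{\partial \mathbb B^{n+1}}(\bar\nu,\bar\nu) + \cot\theta\, A^M(\eta,\eta)$, exactly the boundary potential appearing in \eqref{eigen_II}. Since $\partial \mathbb B^{n+1} = \mathbb S^n$ is totally umbilic with $A^{\partial \mathbb B^{n+1}} = g$, we get $A^{\partial\mathbb B^{n+1}}(\bar\nu,\bar\nu) = 1$, hence $\mathfrak q = \csc\theta + \cot\theta\, A^M(\eta,\eta)$, which is precisely the integrand in the statement; the hypothesis $\csc\theta + \cot\theta\,A^M(\eta,\eta)\geq 0$ together with the non-degeneracy of the capillary contact then guarantees $\mathfrak q \in L^\infty_+(\partial M)$ with $\mathfrak q\not\equiv 0$, so \eqref{cond_potential_func} holds (smoothness of $A^M$ and $\mathrm{Ric}$ on a compact immersed hypersurface makes the $L^p$ condition on $\mathfrak p$ automatic).

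Next I would record that $\mathrm{Ind}(M)$ as defined via \eqref{eigen_I} equals $\mathrm{Neg}(\mathfrak p,\mathfrak q)$: a number $\lambda<0$ is an eigenvalue of \eqref{eigen_I} precisely when $0$ is a negative eigenvalue of the associated Dirichlet-to-Robin operator, and the counting function $\mathrm{Neg}(\mathfrak p,\mathfrak q)$ introduced in Section~\ref{sec_tec} counts exactly these. The hypothesis that the first Dirichlet eigenvalue of $-J_M$ is positive is the condition $\lambda_1(\mathfrak L_g^D)>0$ needed to invoke Theorem~\ref{thm:letterE}. Finally, since $M$ is properly immersed in $\mathbb B^{n+1}$ by an isometric — in particular conformal — map, the relative $(n+1)$-conformal volume $V_{rc}(M,n+1)$ is defined, so we may take $m = n+1$ in Theorem~\ref{thm:letterE}. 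Substituting $m=n+1$, $\mathfrak p = \mathrm{Ric}^{\mathbb B^{n+1}}(\nu,\nu)+|A^M|^2$ and $\mathfrak q = \csc\theta + \cot\theta\,A^M(\eta,\eta)$ into \eqref{ineq_neg} yields
\begin{equation*}
\mathrm{Ind}(M) \geq \frac{C\,\mathrm{vol}_g(M)^{\frac{2-n}{2}}}{\left[V_{rc}(M,n+1)^{\frac{2}{n}} + |\mathfrak p_+|_{L^{n/2}(M)}\right]^{\frac{n}{2}}}\left(\int_{\partial M}\big(\csc\theta + \cot\theta\,A^M(\eta,\eta)\big)\,da_g\right)^{\frac{n}{2}},
\end{equation*}
and absorbing the curvature term $|\mathfrak p_+|_{L^{n/2}(M)}$ — which for $M\subset\mathbb B^{n+1}$ is controlled by $n(n-1)\,\mathrm{vol}_g(M)^{2/n}$ from $\mathrm{Ric}^{\mathbb B^{n+1}}\leq n g$ once one disregards $|A^M|^2$, or more simply by noting that the estimate in Theorem~\ref{thm:letterE} is monotone — one may further bound the denominator by a constant multiple of $V_{rc}(M,n+1)$, giving the cleaner form stated.

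The main obstacle I anticipate is not a deep one but a bookkeeping subtlety: one must verify that the sign convention for $\mathfrak q$ in \eqref{steklov_schr_prob} (boundary condition $\partial_\eta u - \mathfrak q u = \sigma u$) matches the sign convention for $B_{\partial M} = \partial_\eta - \mathfrak q$ in \eqref{eigen_I}, and that passing from the $\lambda$-eigenvalue formulation \eqref{eigen_I} to the $\sigma$-eigenvalue (Dirichlet-to-Robin) formulation preserves the count of negative eigenvalues — this is the content of the discussion around the counting function $\mathcal N_\sigma$ and $\mathrm{Neg}(\mathfrak p,\mathfrak q)$ in Section~\ref{sec_tec}, but it requires $\lambda_1(\mathfrak L_g^D)>0$ to ensure the Dirichlet-to-Robin operator is well-defined and bounded below, which is exactly the stated hypothesis. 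A second, more quantitative point requiring care is whether one truly wants the curvature term absorbed (yielding the stated form with $V_{rc}(M,n+1)$ alone in the denominator) or kept explicit; the cleanest route is to invoke the crude pointwise bound $\mathrm{Ric}^{\mathbb B^{n+1}}(\nu,\nu) + |A^M|^2 \geq 0$ and observe that, since the right-hand side of \eqref{ineq_neg} is non-increasing in $|\mathfrak p_+|_{L^{n/2}(M)}$, dropping or bounding this term only weakens the inequality in a controlled way, after which the constant $C$ is renamed to depend only on $n$.
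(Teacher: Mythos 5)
There is a genuine sign error in your identification that the paper's framework resolves cleanly, but your route does not. Problem \eqref{steklov_schr_prob} is posed for $\mathfrak L_g = -\Delta_g + \mathfrak p$, whereas the Jacobi--Robin eigenvalue problem \eqref{eigen_I} is $(\Delta_M + \mathfrak p_J)\varphi = -\lambda\varphi$ with $\mathfrak p_J = \operatorname{Ric}^{\mathbb B^{n+1}}(\nu,\nu) + |A^M|^2$, i.e.\ $(-\Delta_M - \mathfrak p_J)\varphi = \lambda\varphi$. Matching with $-\Delta + \mathfrak p$ forces $\mathfrak p = -\mathfrak p_J$, not $\mathfrak p = \mathfrak p_J$ as you set. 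This flips the sign of the potential that enters the $|\mathfrak p_+|_{L^{n/2}}$ term in \eqref{ineq_neg}: the correct quantity is $|(\mathfrak p_J)_-|_{L^{n/2}}$, and since $\operatorname{Ric}^{\mathbb B^{n+1}} \equiv 0$ (the Euclidean ball is flat) and $|A^M|^2 \geq 0$, we have $(\mathfrak p_J)_- \equiv 0$, so the denominator collapses to $[V_{rc}(M,n+1)^{2/n}]^{n/2} = V_{rc}(M,n+1)$, exactly as stated. With your choice $\mathfrak p = \mathfrak p_J$ one instead gets $|\mathfrak p_+|_{L^{n/2}} = \big\| |A^M|^2 \big\|_{L^{n/2}(M)}$, which cannot be absorbed: your observation that the right-hand side of \eqref{ineq_neg} is non-increasing in $|\mathfrak p_+|_{L^{n/2}}$ means that \emph{dropping} this term produces a strictly \emph{stronger} lower bound, which Theorem~\ref{thm:letterE} does not give you; you cannot ``weaken the inequality in a controlled way'' by deleting a term from the denominator. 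The appeal to $\operatorname{Ric}^{\mathbb B^{n+1}} \le n g$ is vacuous (the Ricci curvature is zero) and in any case does nothing to bound $|A^M|^2$, which is precisely the term you cannot control.

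A second, smaller point: you introduce the hypothesis $\lambda_1(\mathfrak L_g^D) > 0$, but this does not appear in Theorem~\ref{morse_index_thm}, and it is not needed. That hypothesis is required to make the \emph{Steklov-type} (Dirichlet-to-Robin) eigenvalue problem well-posed (as in Theorem~\ref{morse_index}, which concerns $\operatorname{Ind}^{II}$ and problem \eqref{eigen_II}); the present theorem concerns $\operatorname{Ind}(M)$ defined via the \emph{Robin} problem \eqref{eigen_I}, for which the operator is self-adjoint, bounded below, and has discrete spectrum with no assumption on the Dirichlet spectrum. The correct reading of Remark~\ref{remark_morseindex} is not that ``$\operatorname{Ind}(M) = \operatorname{Neg}(\mathfrak p, \mathfrak q)$'' via the Dirichlet-to-Robin identification, but that the test-function construction in the proof of Theorem~\ref{thm:letterE} applies verbatim to produce $k+1$ disjointly supported functions $u_i$ with
\begin{equation*}
\int_M |\nabla u_i|^2 \, dv_g - \int_M \mathfrak p_J\, u_i^2 \, dv_g - \int_{\partial M} \mathfrak q\, u_i^2 \, da_g < 0,
\end{equation*}
which directly gives $\operatorname{Ind}(M) \geq k+1$ by the variational characterization of the Robin spectrum; the middle term is nonpositive because $\mathfrak p_J \geq 0$, so it may simply be dropped, and the count then follows from \eqref{num} and \eqref{eq_mu_thm5} exactly as in Theorem~\ref{thm:letterE} with the potential term absent.
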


%\end{thebibliography}{}
 
\end{document}